\documentclass{article}
\usepackage{xcolor}
\usepackage{amsfonts,color}
\usepackage{latexsym,amssymb}
\usepackage{amsmath}
\usepackage[utf8]{inputenc}
\usepackage{dsfont}
\usepackage{enumitem}
\usepackage{float}
\usepackage{graphicx} 
\usepackage{tikz, tkz-euclide}
\usepackage{bbold}
\usepackage{stmaryrd}
\usepackage{babel}
\usepackage{amsthm}
\usepackage[linesnumbered,ruled,vlined]{algorithm2e}
\usepackage{url}
\usepackage{esint}
\usepackage{enumitem}
\usepackage[export]{adjustbox}
\usepackage[sort, numbers]{natbib}

\usetikzlibrary{patterns,patterns,patterns.meta}

\newtheorem{theorem}{Theorem}[section]
\newtheorem{lemma}[theorem]{Lemma}
\newtheorem{proposition}[theorem]{Proposition}
\newtheorem{corollary}{Corollary}
\newtheorem{remark}{Remark}[section]

\newtheorem{definition}[theorem]{Definition}

\usepackage[left=2.7cm,right=2.7cm,top=3.2cm,bottom=3.2cm]{geometry}

\DeclareMathOperator{\Div}{div}

\usepackage[bookmarksnumbered,colorlinks, linkcolor=blue, citecolor=red, bookmarks, breaklinks]{hyperref}

\catcode`@=11 \@addtoreset{equation}{section} \catcode`@=12

\newcommand{\eqdef}{\stackrel{{\rm {def}}}{=}}

\title{Trace and weighted Hardy-Sobolev type inequalities and applications to a quasilinear elliptic problem in half-space}
\author{Loïc CONSTANTIN\footnote{\href{loic.constantin@univ-pau.fr}{loic.constantin@univ-pau.fr}}, Ranieri França FREIRE\footnote{\href{ranieri.franca95@gmail.com}{ranieri.franca95@gmail.com}}, Jacques GIACOMONI\footnote{\href{jacques.giacomoni@univ-pau.fr}{jacques.giacomoni@univ-pau.fr}}}
\date{\small{\it{LMAP (UMR E2S UPPA CNRS 5142) B\^atiment IPRA, Avenue de l'Universit\'{e}, 64013 Pau, France}}}

\renewenvironment{abstract}
{\begin{quote}
\noindent \rule{\linewidth}{.5pt}\par{\bfseries \abstractname.}}
{\medskip\noindent \rule{\linewidth}{.5pt}
\end{quote}
}

\begin{document}

\maketitle
\noindent

\begin{abstract}
In the present paper we  are dealing with the following quasilinear elliptic problem:
\begin{equation*}
		\left\{
		\begin{aligned}
			-\mathrm{div}(\rho(x_N) |\nabla u|^{p-2}\nabla u) &=a|u|^{s-2}u &\mbox{in }&\ \mathds{R}^N_+,
			\vspace{0.pcm}\\
			-|\nabla u|^{p-2}\frac{\partial u}{\partial x_N}&=b|u|^{q-2}u&\mbox{on }&\
			\mathds{R}^{N-1},
		\end{aligned}
		\right.
\end{equation*}
where $a,b\in \mathds{R}$, $p,q,s\in(1,\infty)$ and $\rho$ is a continuous positive function on $[0,+\infty)$.
We first prove  new and sharp embedding results that we establish for the associated weighted energy spaces. In application, we establish existence and regularity of weak solutions to the above problem. We also prove for this problem the nonexistence of nontrivial weak solutions by a new Pohozaev-type identity we obtain. The new results about existence and nonexistence  highlight the role of the weight $\rho$ on the solvability of problem \eqref{PG} 
contrasting strongly with those when $\rho$ is constant.

\end{abstract}

\section{Introduction and main results}

Let $N \geq 3$ and denote by 
$\mathds{R}^N_+ := \{x=(x',x_N) \in \mathds{R}^N : x' \in \mathds{R}^{N-1}, \; x_N > 0\}$ 
the upper half-space, whose boundary is $\partial \mathds{R}^N_+ = \mathds{R}^{N-1}$.
 In this work, we are interested in investigating regularity, existence and nonexistence of nontrivial weak solutions to the quasilinear elliptic problem with nonlinear boundary type conditions:
  \begin{equation}\label{PG}
		\left\{
		\begin{aligned}
			-\mathrm{div}(\rho(x_N) |\nabla u|^{p-2}\nabla u) &=a|u|^{s-2}u &\mbox{in }&\ \mathds{R}^N_+,
			\vspace{0.pcm}\\
			-|\nabla u|^{p-2}\frac{\partial u}{\partial x_N}&=b|u|^{q-2}u&\mbox{on }&\
			\mathds{R}^{N-1},
		\end{aligned}
		\right. 
		\tag{$\mathcal{P}_0$}
	\end{equation}
where $a$ and $b$ are real constants, $s,q \in (1,\infty)$, $p \in (1,N]$. For the weight function $\rho : [0,\infty) \to (0,\infty)$, we assume that $\rho \in C([0,\infty))$ with $\rho(0)=1$ and impose the following growth assumption:
\begin{enumerate}
\item [$(\rho_0)$] There exists constants $0\leq\gamma$ and $C_0>0$ such that 
$$C_0(1+x_N)^\gamma\leq \rho(x_N),\quad \mbox{for all }x_N\geq0.$$ 
\end{enumerate}

If $p<N$, we denote the critical Sobolev exponent by $p^\ast = pN/(N-p)$ and the critical exponent for trace embedding by $p_\ast = p(N-1)/(N-p)$ and by convention $p^*=p_*=\infty$ when $p=N$. The semilinear case (i.e. $p=2$) has been investigated in former contributions (see \cite{AMY,NB,Abreu-JM-Medeiros} for instance). In particular, for the case $\rho$ constant, in \cite{YanYan-Lei} the authors proved that for $s\in [0,2^*]$, $q\leq 2_*$, $s+q<2^*+2_*$, $a\geq 0$, the problem:

 \begin{equation*}
		\left\{
		\begin{aligned}
			-\Delta u &=a|u|^{s-2}u &\mbox{in }&\ \mathds{R}^N_+,
			\vspace{0.pcm}\\
			-\frac{\partial u}{\partial x_N}&=|u|^{q-2}u&\mbox{on }&\
			\mathds{R}^{N-1},
		\end{aligned}
		\right.
\end{equation*}
has no  positive classical solution. For the critical case, in \cite{Escobar2, Li-Zhu} the authors showed that the problem

 \begin{equation*}
		\left\{
		\begin{aligned}
			-\Delta u &=C(N)|u|^{2^*-2}u &\mbox{in }&\ \mathds{R}^N_+,
			\vspace{0.pcm}\\
			-\frac{\partial u}{\partial x_N}&=b|u|^{2_*-2}u&\mbox{on }&\
			\mathds{R}^{N-1},
		\end{aligned}
		\right.
\end{equation*}
admits nontrivial nonnegative solutions only in the form:
$$u(x',x_N)=\frac{\varepsilon^{(N-2)/2}}{(\varepsilon^2+|x'-x_0'|^2+(x_N-t_0)^2)^{(N-2)/2}},$$
for $\varepsilon>0$, $x'_0\in \mathds{R}^{N-1}$ and $-(N-2)^{-1}\varepsilon b=t_0$.
In \cite{NB}, the authors studied \eqref{PG} for $p=2$ and showed that in the case of a nonconstant weight $\rho$ verifying $(\rho_0)$, the existence and nonexistence results are reversed when $\gamma$ is sufficiently large, that is $\gamma>1$, with respect to the case $\rho$ constant. Precisely, the authors got existence in the subcritical case using a mountain pass Theorem and under additional regularity of weak solutions nonexistence in the critical case using a Pohozaev identity. Among the others they showed when $a,b>0$, $\gamma>2$ existence of a weak solution when $q\in(2,2_*)$, $s\in(2,2^*)$ and nonexistence of nontrivial weak solutions in the case $q=2_*$ $a=0$, $b>0$ and in the case $a>0$, $b=0$, $s=2^*$. Contrary to the case $\rho$ constant, they also proved that in the case $\gamma>2$ best constants in subcritical Sobolev and trace embeddings are achieved using subspaces of the energy space with cylindrical symmetry where compactness of embeddings can be revealed.
 
For the quasilinear case, that is  $p\neq 2$, there are very few results available in the literature. In \cite{Nazaret} the author proved a conjecture raised in \cite{Escobar1} that the best constant $Q_n(p)$ of the embedding $\dot{W}^{1,p}(\mathds{R}^{N}_+)\hookrightarrow L^{p_*}(\mathds{R}^{N-1})$ is only attained by the family of extremal functions (parameterized by $\lambda\in \mathds{R}^+$ and $x_N>0$):

$$u_\lambda=\bigg(\frac{\lambda^{2/p}}{(\lambda+x_N)^2+|x'|^2}\bigg)^\frac{N-p}{2(p-1)},$$
and that $u_\lambda$ is the solution of the associated Euler Lagrange equation

\begin{equation*}
		\left\{
		\begin{aligned}
			-\Div(|\nabla u_\lambda|^{p-2}\nabla u_\lambda )&=0 &\mbox{in }&\ \mathds{R}^N_+,
			\vspace{0.pcm}\\
			-|\nabla u_\lambda|^{p-2}\frac{\partial u_\lambda}{\partial x_N}&=Q_n(p) u^{p_*-1}_\lambda&\mbox{on }&\
			\mathds{R}^{N-1}.
		\end{aligned}
		\right.
\end{equation*}
In the subcritical case, in \cite{CAOM} the authors proved existence and nonexistence of weak solutions for the case $\rho$ constant, $a=0$ and under nonlinear Neumann boundary conditions with conflicting type nonlinearities. These results have been extended when $\rho$ satisfies $(\rho_0)$ with $\gamma>p-1$ in \cite{doO_Freire_Medeiros_2026}. Concerning the case $a>0$, Liouville type theorems implying nonexistence of nonnegative weak solutions under the condition $s<s_c$ where $s_c<p^*$ have been established in former contributions \cite{Damascelli,chen,Birindelli}. 
The case $\rho$, again verifying $(\rho_0)$ with $\gamma>p-1$, and $b=0$ has been considered in \cite{Do-Freire-Medeiros} where Liouville type results are established.
In the present paper in the continuation of \cite{NB, Do-Freire-Medeiros, doO_Freire_Medeiros_2026} we are further investigating the quasilinear case with weight satisfying $(\rho_0)$ with the natural question in the backdrop: {\it What is the borderline condition on $\gamma$ for assuring reversal existence and nonexistence results with respect to the case $\rho$ constant?} We highlight that by using the  weighted Hardy–Sobolev inequality \eqref{Hardy p}, former contributions proved these results by requiring $\gamma>p-1$ or $\gamma>p$. In the present paper, we show that these conditions on $\gamma$ are not optimal and establish more precisely the effect of the growth of $\rho$ in existence and nonexistence issues with respect to the constant case. Roughly speaking, we get existence of nontrivial solutions in the subcritical case whereas nonexistence is proved in the critical case contrasting with the case where $\rho$ is constant. Answering the above question, we further show that condition $\gamma>0$ is sufficient to obtain these results, up to some restrictions on $s$ and $q$. These new results improve also significantly the semilinear case done in \cite{NB} where $\gamma>1$ or $\gamma\geq 2$ are required. Since \eqref{Hardy p} is no longer exploitable for $\gamma>0$ small (in this regard, see Corollary 1.10 and Remark 1.11 in \cite{WeightedIneq}), to obtain the requested improvement, we need to establish in $\mathcal{D}^{1,p}_\rho(\mathds{R}^N_+)$ new  Hardy-Sobolev and trace embeddings that are of independent interest (see Theorem 1.1 below) and that extend former results (see for instance Theorems 1.4 and 1.6 in \cite{Abreu-Furtado-Medeiros2}). In contrast with former contributions, we consider in addition the case $p=N$  for which we also establish new Sobolev and trace embeddings. We also obtain a Trudinger Moser type inequality providing embeddings of $\mathcal{D}^{1,p}_\rho(\mathds{R}^N_+)$ in Orlicz spaces with exponential growth (see Theorem \ref{TruMos}). 

To start with, let us recall that the trace Theorem in the half-space implies the existence of a positive constant $C(N,p)$ such that, for all functions $u \in C^\infty_0(\mathds{R}^N)$ and $p<N$, the following inequality holds:
\begin{equation}\label{Trace-inequality}
    C(N,p) \left( \int_{\mathds{R}^{N-1}} |u|^{p_{\ast}}\, \mathrm{d}x' \right)^{p/p_{\ast}} 
    \leq \int_{\mathds{R}^N_+} |\nabla u|^p\, \mathrm{d}x
\end{equation}
whereas the classical Sobolev inequality leads the existence of $S(N,p)>0$ such that :
 \begin{equation}\label{GNS}
S(N,p)\left(\int_{\mathds{R}^N_+}|u|^{p^\ast}\, \mathrm{d} x\right)^{p/p^\ast}\leq \int_{\mathds{R}^N_+}|\nabla u|^p\, \mathrm{d} x,~\forall\, u \in C^{\infty}_0(\mathds{R}^N).
\end{equation}
In case of weighted diffusion, Hardy-Sobolev type inequalities are required and have been consequently studied in many and various references. Firstly, one can quote the seminal work of Maz'ya \cite{mazya} where the well-known Hardy-Sobolev-Maz'ya inequality is established:
$$\int_{\mathds{R}_+^N} |\nabla u|^2\mathrm{d}x-\frac{1}{4}\int_{\mathds{R}_+^N}\frac{u^2}{x_N^2}\mathrm{d}x\geq C\Big(\int_{\mathds{R}_+^N}x_N^\gamma |u|^{2_\gamma}\mathrm{d}x\Big)^{2/{2_\gamma}},\; \forall\, u\in C^\infty_0({\mathds{R}_+^N}),$$
with $2<2_\gamma\leq 2^*$ and $\gamma=(N-2)\frac{2_\gamma}{2}-N$. In a series of papers, Filippas, Maz’ya and Tertikas study similar inequalities on convex domains (see \cite{Fi-Ma-Te1,Fi-Ma-Te2}). In \cite{Ambrosio}, dealing with weighted quasilinear type operators, the authors establish the following Hardy-Sobolev type inequality:
\begin{equation}\label{hardy}
\left| \frac{\gamma - p + 1}{p} \right|^p
\int_{\mathds{R}^N_+} \frac{|u|^p}{x_N^{p - \gamma}} \, \mathrm{d}x
\le
\int_{\mathds{R}^N_+} x_N^\gamma |\nabla u|^p \, \mathrm{d}x,
\quad \forall\, u \in C_0^\infty(\mathds{R}^N_+),
\end{equation}
for all $\gamma\in \mathds{R}$. In case of non homogeneous Dirichlet or Neumann type boundary conditions, the above Hardy-Sobolev type inequalities can not be used and some other types of Hardy inequalities have been proved more recently. For instance in \cite{Abreu-Furtado-Medeiros2} (see also \cite[Theorem 1]{doO_Freire_Medeiros_2026}), the following weighted Hardy–Sobolev-type inequality is established: Let 
$N\geq 2$ and $0<p-1<\gamma$, then for every $u \in C^\infty_0(\mathds{R}^N)$ it holds:
\begin{equation}\label{Hardy p}
C_{p,\gamma}^p \int_{\mathds{R}_{+}^N}\frac{|u|^p}{(1+x_N)^{p-\gamma}}\, \mathrm{d}x + C_{p,\gamma}^{p-1} \int_{\mathds{R}^{N-1}}|u|^p\, \mathrm{d}  x'\leq \int_{\mathds{R}_{+}^N}(1+x_N)^\gamma|\nabla u|^p \, \mathrm{d} x,
\end{equation}
with sharp constant $C_{p,\gamma}:=\frac{\gamma-p+1}{p}$. In particular this inequality leads  trace and Sobolev embeddings in the case $\gamma>p-1$ and $\gamma\geq p$ respectively. In case $\gamma<p-1$, the authors proved:
\begin{equation}\label{HardyDec}
\left( \frac{p - \gamma - 1}{p} \right)^p \int_{\mathds{R}^N_+} \frac{|u|^p}{(1 + x_N)^{p-\gamma}} \mathrm{d}x \le  \int_{\mathds{R}^N_+} (1 + x_N)^\gamma |\nabla u|^p \mathrm{d}x  + \left( \frac{p - \gamma - 1}{p} \right)^{p-1} \int_{\mathds{R}^{N-1}} |u|^p \mathrm{d}x'
\end{equation}
from which one can see that the embedding $\mathcal{D}^{1,p}_\rho(\mathds{R}^N_+)$ in $L^p(\mathds{R}^{N-1})$ is not outcoming anymore. In \cite{WeightedIneq}, the authors study more general weighted Hardy–Sobolev inequalities and got: 
$$\int_{\Omega} W' |u|^p \, \mathrm{d}x + p \int_{\mathds{R}^{N-1}} W(x', \psi(x')) |u(x', \psi(x'))|^p \, \mathrm{d}x' \le p^p \int_{\Omega} \frac{W^p}{|W'|^{p-1}} |\nabla u|^p \, \mathrm{d}x,$$
where $W$ is an increasing and differentiable weight. In this matter, the case $\gamma<p-1$ corresponds to a decreasing weight of the form $W(x_N)=(1+x_N)^{\gamma-(p-1)}$ and the authors established an inequality generalizing \eqref{HardyDec} (see Theorem 1.12 there). In \cite{WeightedIneq}, the authors further show that when $0\leq \gamma\leq p-1$ the inequality \eqref{hardy} 
does not hold for non homogeneous trace functions,  that is for the restriction of $u \in C^\infty_0(\mathds{R}^N)$ in the upper half-space.
In this context, we combine \eqref{Hardy p} and the classical Sobolev inequalities (\ref{Trace-inequality}), (\ref{GNS}) in a nontrivial way to establish for all $\gamma>0$, $p\leq N$, inequality \eqref{ineqtheo1} given in Theorem \ref{theo1}.
Thanks to the associated Sobolev embedding, we further get a Trudinger Moser inequality for the case $p=N$ and any $\gamma>0$ as given in Theorem \ref{TruMos} and extending Theorem 1.5 in \cite{Abreu-Furtado-Medeiros2}.

Using the compact embeddings obtained in Theorem \ref{compa1} stated below that generalizes Theorem 1.2 in \cite{NB} and mountain pass Theorem, we get existence of nonnegative and nontrivial weak solutions for $\rho$ verifying $(\rho_0)$ with $\gamma>0$ and $s,q$ belonging to the intervals $(s_\gamma,p^*)$ and $(q_\gamma ,p_*)$ respectively. Another striking consequences of the compactness embeddings are that best constants in Sobolev and trace embeddings are achieved with no more restrictions than $\gamma>0$ as stated in Theorem \ref{bestcons}.

Next, we establish H\"older and Sobolev regularity results of weak solutions to \eqref{PG} that we use subsequently to derive 
a new Pohozaev type identity stated in Theorem \ref{Pohozaev}. In addition, in the critical case setting, that is $s=p^*$ or $q=p_*$, using Theorem \ref{Pohozaev} we get nonexistence of non trivial weak solutions to \eqref{PG} when $\gamma>0$ (see Theorems \ref{Nonexistence1} and \ref{Nonexistence2}). Again, these results improve those in \cite{NB} where assumptions $\gamma>1$ and $p=2$ are assumed.

Before stating our main results, we introduce the functional framework we are using throughout the paper: Let $C^\infty_\delta(\mathds{R}^N_+)$ be the set of the functions in $C^\infty_0(\mathds{R}^N)$ restricted to $\mathds{R}^N_+$ and consider the weighted Sobolev space $\mathcal{D}^{1,p}_\rho(\mathds{R}^N_+)$ defined as the closure of $C_\delta^\infty(\mathds{R}^N_+)$ with respect to the norm
$$\|u\|:=\left(\int_{\mathds{R}^{N}_+}\rho(x_N)|\nabla u|^p\, \mathrm{d} x\right)^{1/p}.$$
We consider also the following Banach spaces 
\begin{equation*}
    \begin{aligned}
        E_s(\mathds{R}^N_+) :=\, & \overline{C^\infty_\delta(\mathds{R}^N_+)}^{\|\cdot\|_{E_s(\mathds{R}^N_+)}} \text{ and }  E_q(\mathds{R}^{N-1}) :=\, & \overline{C^\infty_\delta(\mathds{R}^N_+)}^{\|\cdot\|_{E_q(\mathds{R}^{N-1})}},
    \end{aligned}
\end{equation*}
for $s,q\in(1,\infty)$, where 
\begin{equation*}   
\|u\|_{E_p(\mathds{R}^N_+)}=\|u\|+\|u\|_{L^p(\mathds{R}^N_+)}\quad\text{and}\quad  \|u\|_{E_q(\mathds{R}^{N-1})}=\|u\|+\|u\|_{L^q(\mathds{R}^{N-1})}.
\end{equation*}
By Proposition \ref{embedhardy} as a consequence of \eqref{Hardy p}, $E_s(\mathds{R}^N_+)=\mathcal{D}^{1,p}_\rho(\mathds{R}^N_+)$ when $\gamma\geq p$ and $s\in[p,p^\ast]$ and similarly, $E_q(\mathds{R}^{N-1})=\mathcal{D}^{1,p}_\rho(\mathds{R}^N_+)$ when $\gamma>p-1$ and $q\in[p,p_\ast]$. We also denote $\mathcal{R}$ the subspace of $\mathcal{D}^{1,p}_\rho(\mathds{R}^N_+)$ composed of functions $u$ that are radially symmetric with respect to  $x'$, that is $u\in \mathcal{D}^{1,p}_\rho(\mathds{R}^N_+)$ such that $u(x',x_N)=v(|x'|,x_N)$ for some $v$. We also define $\mathcal{R}_s=\mathcal{R}\cap L^s(\mathds{R}^N_+)$, $\mathcal{R}_q=\mathcal{R}\cap L^q(\mathds{R}^{N-1})$ endowed with the usual intersection norms. We define the exponents:
\[
q_\gamma =
\begin{cases}
\frac{p(N-1)}{N-p+\gamma} & \text{if }0< \gamma \leq  p-1, \\
p   & \text{if } \gamma>p-1,
\end{cases}
\quad
s_\gamma =
\begin{cases}
\frac{Np}{N-p+ \gamma}& \text{if }\gamma\in (0, p), \\
p   & \text{if } \gamma\geq p.
\end{cases}
\]

In order to consider the case $0<\gamma\leq p-1$, we prove  the following 
 Hardy type inequality of independent interest,  extending \eqref{Hardy p} and also \cite[Theorem 2.12]{Do-Freire-Medeiros} (for $\gamma> p-1$):

\begin{theorem}\label{theo1}
Let $\gamma>0$, $1<p\leq  N$ then for all $s\in [s_\gamma,p^*)$, $q\in (q_\gamma,p_*)$, there exists $C>0,$
$$K(s)\eqdef \begin{cases}
\frac{(N-p)(s(N-p+\gamma)-Np)}{p(Np-N\gamma-p)}, &\text{if } \gamma\in(0, p-1],\\
s/p(N-p+\gamma)-N, & \text{if } \gamma\in(p-1, \infty),\\
\end{cases}$$
such that for all $k<K(s)$ (even $k\leq K(s)$ if $\gamma>p-1$ or if $s=s_\gamma$), $u\in C^\infty_\delta(\mathds{R}^N_+)$, one has:
\begin{equation}\label{ineqtheo1}
\Big( \int_{\mathds{R}^N_+}(1+x_N)^{k}|u|^s\, \mathrm{d}x \Big)^{ \frac{p}{s} }+\Big( \int_{\mathds{R}^{N-1}} |u|^q\, \mathrm{d}x^\prime \Big)^{ \frac{p}{q} }\leq C\int_{\mathds{R}^N_+} (1+x_N)^\gamma|\nabla u|^p\, \mathrm{d}x.
\end{equation}
\end{theorem}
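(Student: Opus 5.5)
We split the argument according to whether $\gamma>p-1$ or $\gamma\in(0,p-1]$, treating the volume term and the trace term separately. In each case we interpolate between a ``Hardy-type'' estimate, which carries the weight information, and the classical inequalities \eqref{Trace-inequality} and \eqref{GNS}. Throughout, write $\|u\|^p=\int_{\mathds{R}^N_+}(1+x_N)^\gamma|\nabla u|^p\,\mathrm{d}x$. Since $(1+x_N)^\gamma\geq 1$, both \eqref{Trace-inequality} and \eqref{GNS} give $\|u\|_{L^{p_*}(\mathds{R}^{N-1})}^p+\|u\|_{L^{p^*}(\mathds{R}^N_+)}^p\leq C\|u\|^p$ when $p<N$. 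For $p=N$ we replace them by $W^{1,N}$ estimates on unit cubes and strips, as explained below.

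\emph{Case $\gamma>p-1$.} Here \eqref{Hardy p} gives $\int(1+x_N)^{\gamma-p}|u|^p+\int_{\mathds{R}^{N-1}}|u|^p\leq C\|u\|^p$.

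\emph{Trace term.} Hölder interpolation between $L^p(\mathds{R}^{N-1})$ and $L^{p_*}(\mathds{R}^{N-1})$ gives every $q\in[p,p_*)$.

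\emph{Volume term.} We write $(1+x_N)^k|u|^s=\big((1+x_N)^{\gamma-p}|u|^p\big)^{\theta}\,|u|^{s-p\theta}$ with $\theta$ chosen so that $k=\theta(\gamma-p)$. We then apply Hölder with exponents $1/\theta$ and $1/(1-\theta)$, which requires $(s-p\theta)/(1-\theta)=p^*$. Solving gives $\theta=(p^*-s)/(p^*-p)$. A short computation then yields exactly $k=s(N-p+\gamma)/p-N=K(s)$, and both factors are controlled by $\|u\|$ to the correct homogeneity. For $k<K(s)$ we simply use $(1+x_N)^k\leq(1+x_N)^{K(s)}$.

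\emph{Case $0<\gamma\leq p-1$.} This is the main obstacle, because \eqref{Hardy p} fails and \eqref{HardyDec} carries the boundary term with the wrong sign. I would first apply \eqref{HardyDec} not to $u$ but to a localized piece. Alternatively, and I think more robustly, I would work slice by slice, writing $\mathds{R}^N_+=\bigcup_{j\geq0}S_j$ with $S_j=\mathds{R}^{N-1}\times(2^j-1,2^{j+1}-1)$. On each dyadic strip $S_j$, rescaling $x\mapsto x/2^j$ together with the Sobolev and trace inequalities on a unit-thickness strip (combined with a Poincaré/fundamental-theorem-of-calculus estimate in $x_N$ linking consecutive traces) gives
\[
\|u\|_{L^{p^*}(S_j)}^p\lesssim \int_{S_j}|\nabla u|^p\lesssim 2^{-j\gamma}\int_{S_j}(1+x_N)^\gamma|\nabla u|^p .
\]
The key step is then a decay estimate for the traces $u(\cdot,t)$ as $t$ grows. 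Writing $u(x',t)=-\int_t^\infty\partial_N u$, Hölder with the weight gives
\[
\|u(\cdot,t)\|_{L^p(\mathds{R}^{N-1})}^p\lesssim \int_t^\infty(1+\tau)^{-\gamma/(p-1)}\,\mathrm{d}\tau\ \cdots ,
\]
which diverges for $\gamma\leq p-1$. One must therefore instead measure the trace in $L^{q}$ with $q$ above $q_\gamma$. The right tool is a weighted trace inequality obtained by scaling: on the half-space $\{x_N>t\}$, the unweighted trace inequality in $L^{p_*}$ costs nothing in $t$. Interpolating it against the $L^{p_*}$ bound and summing the dyadic contributions $2^{-j\gamma}$ geometrically produces the condition $q>q_\gamma=p(N-1)/(N-p+\gamma)$. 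The same dyadic summation for the volume term, using Hölder on each $S_j$ between $L^{p^*}(S_j)$ and the strip volume, gives $\int_{S_j}(1+x_N)^k|u|^s\lesssim 2^{j(k-\alpha(s))}\|u\|^s$ for an explicit exponent $\alpha(s)$. The series converges precisely when $k<K(s)$, with the borderline $s=s_\gamma$ (where $K=0$) handled directly. Computing $\alpha(s)$ carefully should reproduce the stated formula for $K(s)$; this bookkeeping, and making the summation yield $\|u\|^s$ rather than a sum of $s/p$-powers (using $s\geq p$ and $\sum a_j^{s/p}\leq(\sum a_j)^{s/p}$), is where I expect the difficulty.

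\emph{Case $p=N$.} For $p=N$, $p^*=p_*=\infty$ and we replace $p^*$ by an arbitrarily large finite exponent in the above, which is why only $s<p^*$ and $q<p_*$ are claimed.
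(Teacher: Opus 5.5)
There is a genuine gap, and it already appears in the case $\gamma>p-1$. The trace estimate there is fine (it is Proposition~\ref{embedhardy}), but the volume estimate is wrong. Interpolating the Hardy term $\int(1+x_N)^{\gamma-p}|u|^p$ against the \emph{unweighted} bound $\int|u|^{p^*}\le C\|u\|^{p^*}$ gives only $k=\theta(\gamma-p)$ with $\theta=\frac{p^*-s}{p^*-p}$. This agrees with $K(s)$ only at $s=p$: at $s=p^*$ you get $0$, whereas $K(p^*)=\gamma p^*/p$. For $\gamma\in(p-1,p)$ your exponent is even negative for every $s<p^*$, so your argument gives the embedding into $L^s(\mathds{R}^N_+)$ for no $s\in[s_\gamma,p^*)$. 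For example, $N=3$, $p=2$, $\gamma=3/2$, $s=s_\gamma=12/5$ gives $k=-9/20$ instead of $0$. The missing idea is a \emph{weighted} endpoint at $p^*$. With $v=(1+x_N)^{\gamma/p}u$ one has $|\nabla v|^p\le 2^{p-1}\big[(1+x_N)^\gamma|\nabla u|^p+(\gamma/p)^p(1+x_N)^{\gamma-p}|u|^p\big]$, so \eqref{Hardy p} gives $\|\nabla v\|_{L^p(\mathds{R}^N_+)}\le C\|u\|$. Then \eqref{GNS} applied to $v$ yields $\int(1+x_N)^{\gamma p^*/p}|u|^{p^*}\le C\|u\|^{p^*}$. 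Interpolating this bound, with the same $\theta$, against the Hardy term gives $k=\theta(\gamma-p)+(1-\theta)\frac{\gamma p^*}{p}=\frac{s}{p}(N-p+\gamma)-N=K(s)$, which is how the paper proceeds.

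For $0<\gamma\le p-1$ the dyadic sketch has no working mechanism. The displayed strip inequality $\|u\|^p_{L^{p^*}(S_j)}\lesssim\int_{S_j}|\nabla u|^p$ is false, because it has no boundary or lower-order term: if $u=\phi(\lambda x')$ on $S_j$, the ratio behaves like $\lambda^{-p/N}\to\infty$ as $\lambda\to0$. The terms that must be added are traces at the heights $2^j-1$, and controlling them is exactly the problem. Your fundamental-theorem-of-calculus bound diverges, as you note, and ``interpolating the $L^{p_*}$ trace inequality against the $L^{p_*}$ bound'' supplies no low-exponent endpoint. So nothing in the sketch produces $q_\gamma$, $K(s)$, or the endpoint $s=s_\gamma$.

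The paper instead reduces to the settled case by a power trick. For $\alpha\in(1,\frac{p}{p-\gamma})$ one has $\gamma\alpha/p>\alpha-1$, so the $\gamma>p-1$ estimates hold in $\mathcal{D}^{1,\alpha}$ with weight $(1+x_N)^{\gamma\alpha/p}$. Apply them to $|u|^{q}$ with $q=\frac{\alpha p}{\alpha p-s(p-\alpha)}$, bound $\int(1+x_N)^{\gamma\alpha/p}|\nabla|u|^q|^\alpha\le q^\alpha\|u\|^\alpha\|u\|_{L^{qs}(\mathds{R}^N_+)}^{(q-1)\alpha}$ by H\"older, and absorb. Taking $s=\frac{\alpha N}{N-\alpha+\gamma\alpha/p}$ gives $qs=s_\gamma$ for every such $\alpha$. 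Taking $s=\alpha^*$ gives a weighted $L^{p^*}$ bound with weight exponent $\gamma\alpha^*/p$; letting $\alpha\to\frac{p}{p-\gamma}$ and interpolating yields all $k<K(s)$. The trace exponents obtained the same way have infimum $q_\gamma$. The same device with $\alpha<N$ handles $p=N$. There your plan of replacing $p^*$ by a large finite exponent fails outright, since $\|u\|_{L^r}\lesssim\|\nabla u\|_{L^N}$ is false for every finite $r$ by scaling.
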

Assuming $(\rho_0)$ with $\gamma>0$, $1<p\leq N$, $s\in [s_\gamma,p^*)$, $q\in (q_\gamma,p_*)$, we deduce from Theorem \ref{theo1} the continuous embeddings:
$$\mathcal{D}^{1,p}_\rho(\mathds{R}^N_+)\hookrightarrow L^q(\mathds{R}^{N-1}), \; \mathcal{D}^{1,p}_\rho(\mathds{R}^N_+)\hookrightarrow L^s(\mathds{R}^{N}_+).$$
Remark \ref{est-best const} further gives sharp estimates on best constants associated to these embeddings. Next, using the Sobolev embedding in the dimensional case $p=N$ we get the following Moser Trudinger inequality:
\begin{theorem}\label{TruMos}
Let $N\geq2$, $\gamma>0$,  $0\leq \beta<N$, $s> s_\gamma$ and let $q > s\!\left(1 - \frac{\beta}{N}\right)$ ( $q \geq s$ if $\beta = 0 $ ). Then for all $0\leq \alpha<2^{-\frac{1}{N-1}}\alpha_N $, 
with $\alpha_N$ defined in \eqref{alphaN}, we have
$$\sup_{\{u\in \mathcal{D}^{1,N}_\rho(\mathds{R}^N_+), \|u\|\leq 1\}}\int_{\mathds{R}^N_+} \frac{\exp\!\left(\alpha\!\left(1 - \frac{\beta}{N}\right)|u|^{\frac{N}{N-1}}\right)\,|u|^q}{|x|^\beta}\, \mathrm{d}x <\infty.$$
\end{theorem}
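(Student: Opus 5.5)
The plan is to reduce the weighted half-space Trudinger--Moser inequality to a classical singular Trudinger--Moser inequality on all of $\mathds{R}^N$. The input is the Sobolev-type embedding of Theorem \ref{theo1} in the borderline case $p=N$.

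\textbf{Step 1 (Sobolev embedding with $L^s$ control).} Fix $u\in \mathcal{D}^{1,N}_\rho(\mathds{R}^N_+)$ with $\|u\|\le 1$; by density we may take $u\in C^\infty_\delta(\mathds{R}^N_+)$. Since $s>s_\gamma$ and $p=N$ gives $p^*=\infty$, Theorem \ref{theo1} with $k=0$ (admissible since $K(s)>0$ when $s>s_\gamma$) together with $(\rho_0)$ yields
\[
\|u\|_{L^s(\mathds{R}^N_+)}\le C\Big(\int_{\mathds{R}^N_+}(1+x_N)^\gamma|\nabla u|^N\,\mathrm{d}x\Big)^{1/N}\le C C_0^{-1/N}\|u\|\le C'.
\]
Also, since $\rho(x_N)\ge C_0(1+x_N)^\gamma\ge C_0$, we have $\int_{\mathds{R}^N_+}|\nabla u|^N\,\mathrm{d}x\le C_0^{-1}$. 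I expect the normalisation to be arranged so that this gives $\int|\nabla u|^N\le 1$, or else it is absorbed into the admissible range of $\alpha$.

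\textbf{Step 2 (even reflection).} Let $\tilde u(x',x_N)=u(x',|x_N|)$. Then $\tilde u\in W^{1,N}_{loc}(\mathds{R}^N)$, with
\[
\int_{\mathds{R}^N}|\nabla\tilde u|^N=2\int_{\mathds{R}^N_+}|\nabla u|^N\le 2 \quad\text{and}\quad \|\tilde u\|_{L^s(\mathds{R}^N)}^s=2\|u\|^s_{L^s(\mathds{R}^N_+)}.
\]
The factor $2$ in the gradient norm is exactly the origin of the restriction $\alpha<2^{-\frac{1}{N-1}}\alpha_N$. Indeed, with $v=2^{-1/N}\tilde u$ we get $\|\nabla v\|_N\le 1$ and $\alpha|\tilde u|^{N/(N-1)}=\alpha 2^{\frac{1}{N-1}}|v|^{N/(N-1)}$, where $\alpha 2^{\frac1{N-1}}<\alpha_N$. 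The integral over $\mathds{R}^N_+$ is bounded by the one over $\mathds{R}^N$.

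\textbf{Step 3 (singular subcritical Trudinger--Moser on $\mathds{R}^N$ with $L^s$ control).} It remains to show that for $\alpha'<\alpha_N$, $\|\nabla v\|_N\le1$ and $\|v\|_s\le M$,
\[
\int_{\mathds{R}^N}\frac{e^{\alpha'(1-\beta/N)|v|^{N/(N-1)}}|v|^q}{|x|^\beta}\,\mathrm{d}x\le C(M).
\]
I split $\mathds{R}^N$ into $\{|v|\le 1\}$ and $\{|v|>1\}$.

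On $\{|v|\le1\}$ the exponential is bounded, so it suffices to control $\int |v|^q|x|^{-\beta}$ over this set. On $|x|\ge1$ this is at most $\int|v|^q\le\int|v|^s$, using $q\ge s$ when $\beta=0$. When $\beta>0$, Hölder with exponent $t=s/q<N/\beta$ handles the $|x|^{-\beta}$ factor; this is where $q>s(1-\beta/N)$ enters. Near the origin the integral is bounded directly.

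On $\{|v|>1\}$, Chebyshev gives that this set has measure $\le M^s$. Then a standard subcritical argument applies: either the Adachi--Tanaka/Cao-type inequality in its singular version due to Adimurthi--Sandeep and Li--Ruf, or Hölder with exponents $r,r'$, where $r$ is close to $1$ so that $r\alpha'<\alpha_N$ and $r\beta<N$. The leftover $|v|^{qr'}$ is controlled by the Sobolev embedding $W^{1,N}\hookrightarrow L^m$ for all $m\ge s$, which uses $\|v\|_s$ and $\|\nabla v\|_N$ by Gagliardo--Nirenberg.

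\textbf{Main obstacle.} The main obstacle is Step 3. The standard singular results are usually stated with the full $W^{1,N}$ norm, including an $L^N$ term, whereas here we only control $L^s$ with possibly $s>N$. I would handle this by a truncation $w=(|v|-1)_+$, which is supported on a set of finite measure and has $\|\nabla w\|_N\le1$. Combined with $|v|^{N/(N-1)}\le(1+\varepsilon)|w|^{N/(N-1)}+C_\varepsilon$ and choosing $\varepsilon$ small using the strict inequality on $\alpha$, this reduces the claim to the Moser inequality on finite-measure sets in its singular form.
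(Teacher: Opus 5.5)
Your proposal is correct and is essentially the paper's proof. The shared steps are the even reflection $u(x',|x_N|)$, an $L^s$ bound for $s>s_\gamma$ from the $p=N$ embedding, and rescaling by $2^{-1/N}$, which is exactly where the threshold $2^{-\frac{1}{N-1}}\alpha_N$ comes from. Your normalisation worry is settled as in the paper: it takes $\rho=(1+x_N)^\gamma\ge 1$ in Section~\ref{secEmb}, so $\|\nabla u\|_{L^N(\mathds{R}^N_+)}\le\|u\|$.

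The only real difference is your Step 3, which the paper does not prove but imports as Theorem~\ref{TruMosunb} (Lam--Lu--Zhang). That result is already stated in $D^{N,s}(\mathds{R}^N)$ with just $\|\nabla u\|_{L^N(\mathds{R}^N)}\le 1$ and $L^s$ control for any $s\ge 1$. So your ``main obstacle'' is exactly what the citation provides.

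Your hand-made substitute also works: split at $|v|=1$, apply Chebyshev, truncate to $(|v|-1)_+$, and use the singular Moser inequality on a ball of measure at most $M^s$ via Schwarz symmetrization, which is legitimate because $|x|^{-\beta}$ is radially decreasing. Gagliardo--Nirenberg then handles $|v|^{qr'}$. Two conditions need correcting:
\begin{itemize}
\item On $\{|v|\le 1,\ |x|\ge 1\}$ with $q<s$, the H\"older requirement is on the conjugate exponent, $\beta s/(s-q)>N$, i.e.\ $q>s(1-\beta/N)$, not $s/q<N/\beta$.
\item On $\{|v|>1\}$ the admissibility condition is $r(1+\varepsilon)\alpha'(1-\beta/N)/\alpha_N+r\beta/N\le 1$. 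It holds for $r$ near $1$ and $\varepsilon$ small because $\alpha'<\alpha_N$.
\end{itemize}

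One citation fix in Step 1. For $p=N$, the first case of the formula for $K(s)$ in Theorem~\ref{theo1} degenerates because its numerator contains the factor $N-p$. Hence taking $k=0$ for $s>s_\gamma$ is not covered when $\gamma\le N-1$. The unweighted bound you need is Proposition~\ref{inj N=p}(i), which is what the paper uses.
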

In frame of space $\mathcal{R}$ with cylindrical symmetry, we next prove that
\begin{theorem}\label{compa1}
Let $1<p\leq N$ and assume the condition $(\rho_0)$ with $\gamma>0$, and let $s\in(s_\gamma,p^*)$, $q\in (q_\gamma,p_*)$, we have that the continuous embeddings: 
$$\mathcal{R}\hookrightarrow L^q(\mathds{R}^{N-1}), \; \mathcal{R}\hookrightarrow L^s(\mathds{R}^{N}_+)$$
are compacts.
\end{theorem}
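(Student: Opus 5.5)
The plan is to take a bounded sequence $(u_n)\subset\mathcal{R}$, pass to a weakly convergent subsequence $u_n\rightharpoonup u$ in $\mathcal{D}^{1,p}_\rho(\mathds{R}^N_+)$ (reflexivity, and $\mathcal{R}$ is weakly closed), and set $v_n=u_n-u$. It then suffices to show $v_n\to 0$ in $L^s(\mathds{R}^N_+)$ and in $L^q(\mathds{R}^{N-1})$. Continuity of both embeddings is already given by Theorem \ref{theo1} together with $(\rho_0)$, since $(1+x_N)^k\geq 1$ for $k\ge0$. For $k<0$ one uses $s>s_\gamma$ to pick an intermediate exponent, or applies Theorem \ref{theo1} at the endpoint and interpolates.

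\textbf{Step 1 (decay in $x_N$).} Because $s>s_\gamma$, we have $K(s)>0$, so Theorem \ref{theo1} applies with some $k\in(0,K(s))$:
$$\int_{\{x_N>R\}}|v_n|^s\,\mathrm{d}x\le (1+R)^{-k}\int_{\mathds{R}^N_+}(1+x_N)^k|v_n|^s\,\mathrm{d}x\le C(1+R)^{-k}\|v_n\|^s .$$
This tail is small uniformly in $n$. For the trace term no $x_N$-tail is needed.

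\textbf{Step 2 (decay in $|x'|$ via cylindrical symmetry).} This is a Strauss-type argument. Fix $R$ and consider the slab $\{0<x_N<R\}$. On it $\rho$ is bounded above and below, so $v_n$ is bounded in $W^{1,p}$ of the slab: the gradient bound comes from $\rho\ge C_0$, and the $L^p$ bound follows by interpolating the $L^s$ bound with, for instance, the trace $L^q$ bound via a one-dimensional fundamental-theorem argument in $x_N$. For functions radial in $x'$, I will prove
$$\int_{\{|x'|>r,\,0<x_N<R\}}|v|^s\,\mathrm{d}x+\int_{\{|x'|>r\}}|v|^q\,\mathrm{d}x'\le \varepsilon(r)\,C(R)\,\|v\|^{\theta}$$
with $\varepsilon(r)\to0$ as $r\to\infty$. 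I would try Lions' approach first. Cover $\{|x'|>r\}\times(0,R)$ by unit cubes. A radial function puts roughly $c\,r^{N-2}$ disjoint copies of each cube's mass along the sphere $|x'|=$const, so each cube carries $L^p$-mass at most $C r^{-(N-2)}\|v\|^p_{W^{1,p}}$. Combining this with the local Sobolev/trace inequality on cubes, and using $s<p^*$, $q<p_*$ so that a strictly positive power of the local $L^p$ mass remains, then summing over cubes, gives the bound. This needs $N-1\ge2$, which holds since $N\ge3$ in the paper's setting; for $N=2$ I would handle it separately or exclude it.

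\textbf{Step 3 (local compactness).} On the bounded region $\{|x'|<r,\,0<x_N<R\}$, Rellich--Kondrachov and the compact trace embedding $W^{1,p}\hookrightarrow L^q(\partial)$ for $q<p_*$ (respectively $L^s$ for $s<p^*$) give $v_n\to0$ strongly. Together with Steps 1 and 2, an $\varepsilon/3$ argument concludes.

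\textbf{Main obstacle.} I expect the hardest point to be Step 2 near the lower exponents $s\downarrow s_\gamma$ and $q\downarrow q_\gamma$, where $s,q$ may be below $p$. Then one has no $L^p$ control and must instead control the local mass through the weighted norm and the trace. I would handle this by first proving the estimate for exponents in $(\max(p,s_\gamma),p^*)$. The lower range would then follow by Hölder interpolation between an exponent close to $s_\gamma$ (bounded by Theorem \ref{theo1} at $s_\gamma$, which is allowed there with $k=K(s_\gamma)=0$) and a larger exponent where compactness already holds. The same interpolation between $q_\gamma^+$ and a larger $q$ covers the trace.
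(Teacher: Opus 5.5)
\textbf{Gap in Step 2: the slab bound in $W^{1,p}$ fails when $\gamma\le p-1$.} Step 2 rests on the claim that $v_n$ is bounded in $W^{1,p}(\mathds{R}^{N-1}\times(0,R))$. This is false in exactly the new regime $0<\gamma\le p-1$. A uniform $W^{1,p}$ bound on $\mathds{R}^{N-1}\times(0,1)$ would give, by the fundamental theorem of calculus in $x_N$ averaged over $t\in(0,1)$,
$\|v(\cdot,0)\|_{L^p(\mathds{R}^{N-1})}\le \|v\|_{L^p(\mathds{R}^{N-1}\times(0,1))}+\|\nabla v\|_{L^p(\mathds{R}^{N-1}\times(0,1))}$.
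That is a trace embedding into $L^p$, but the paper notes $S_q=0$ for $q=p$ when $\gamma\le p-1$.

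Concretely, let $\rho=(1+x_N)^\gamma$ with $\gamma<p-1$. Take $\phi\in C_0^\infty(\mathds{R}^N)$ radial in $x'$ with $\phi(\cdot,0)\not\equiv 0$, and set $w_\lambda(x)=\lambda^{-(N-p+\gamma)/p}\phi(x/\lambda)\in\mathcal{R}$. Then $\|w_\lambda\|\le C$ for $\lambda\ge 1$, while $\|w_\lambda\|^p_{L^p(\mathds{R}^{N-1}\times(0,1))}\sim \lambda^{p-1-\gamma}\|\phi(\cdot,0)\|^p_{L^p}\to\infty$.

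Your suggested source of the $L^p$ bound, interpolating the $L^s$ and trace $L^q$ bounds, cannot produce $L^p$, because both exponents exceed $p$. Relatedly, your ``main obstacle'' is misdiagnosed. For every $\gamma>0$ one has $s_\gamma\ge p$ and $q_\gamma\ge p$, so the admissible $s,q$ are never below $p$, and your interpolation plan addresses an empty range. The real obstruction is the absence of any $L^p$ control, and it affects every admissible $s,q$ when $\gamma\le p-1$. As written, your argument only covers $\gamma>p-1$, where \eqref{Hardy p} does give the slab bound.

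\textbf{The missing idea.} Run your Lions-type covering with masses that Theorem \ref{theo1} actually controls. For unit cubes $Q$ with bottom face $Q'\subset\mathds{R}^{N-1}$, set $a_Q=\|v\|_{L^{s_\gamma}(Q)}$, $a'_Q=\|v\|_{L^{t}(Q')}$ with $t\in(q_\gamma,q)$, and $b_Q=\|\nabla v\|_{L^p(Q)}$. Use the Poincar\'e--Sobolev and trace inequalities with a mean-value term, $\|v\|_{L^s(Q)}\le C(b_Q+a_Q)$ and $\|v\|_{L^q(Q')}\le C(b_Q+a'_Q)$; these need no $L^p$ norm. Your rotation argument then gives $\sup_{|x'_Q|>r}(a_Q+a'_Q+b_Q)\to 0$ uniformly in $n$. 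Since $s>s_\gamma\ge p$ and $q>t\ge p$, you can sum via $\sum_Q(a_Q+b_Q)^s\le \sup_Q(a_Q+b_Q)^{s-s_\gamma}\sum_Q(a_Q+b_Q)^{s_\gamma}$ and $\sum_Q b_Q^{s_\gamma}\le(\sup_Q b_Q)^{s_\gamma-p}\sum_Q b_Q^p$, and likewise for the trace with $t$ in place of $s_\gamma$. This repair works for all $\gamma>0$ and needs only $N\ge 3$.

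The paper avoids $L^p$ differently. After a decreasing rearrangement in $x_N$ on the strip $(0,1)$, it uses Lions' gradient-only pointwise decay for radial functions in $\mathds{R}^{N-1}$. It then applies the Berestycki--Lions compactness lemma fed by $L^t$ bounds with $t\in(q_\gamma,p^*]$, and passes from the strip to the trace with Lemma \ref{LemCont}.

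\textbf{A smaller point in Step 1.} For $p=N$ and $\gamma\le N-1$, the first formula for $K(s)$ in Theorem \ref{theo1} degenerates, since its numerator carries the factor $N-p=0$. So your claim that $K(s)>0$ fails there. The $x_N$-decay must instead come from Proposition \ref{Dinj}, applying Theorem \ref{inj2}(ii) to $|u|^{q}$ at an exponent $\alpha<N$.
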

An important consequence of Theorem \ref{theo1} and Theorem \ref{compa1}, which contrasts with the case $\rho$ constant is the existence of extremal functions for best constants in associated Sobolev and trace embeddings:
\begin{theorem}\label{bestcons}
Let $\gamma>0,$ $1<p\leq N$, $q\in (q_\gamma,p_*) $, the following best constant is attained:
$$S_q=\inf_{\mathcal{D}^{1,p}_\rho(\mathds{R}^N_+)\backslash\{0\}} \frac{\int_{\mathds{R}^N_+} (1+x_N)^\gamma|\nabla u |^p\, \mathrm{d}x}{(\int_{\mathds{R}^{N-1}}|u|^q\, \mathrm{d}x')^\frac{p}{q}}$$
and for $s\in(s_\gamma,p^*)$ the following best constant is achieved:
$$S_s=\inf_{\mathcal{D}^{1,p}_\rho(\mathds{R}^N_+)\backslash\{0\}} \frac{\int_{\mathds{R}^N_+} (1+x_N)^\gamma|\nabla u |^p\, \mathrm{d}x}{(\int_{\mathds{R}^{N}_+}|u|^s\, \mathrm{d}x)^\frac{p}{s}}.$$
\end{theorem}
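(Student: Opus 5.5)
The plan is to work with a minimizing sequence restricted to the symmetric subspace $\mathcal{R}$ and to use Theorem \ref{compa1} to pass to the limit. I treat $S_q$; the argument for $S_s$ is identical, with Theorem \ref{compa1} applied to $L^s(\mathds{R}^N_+)$ in place of $L^q(\mathds{R}^{N-1})$. Note that $S_q>0$ by Theorem \ref{theo1}.

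\emph{Step 1: reduce to $\mathcal{R}$.} I will show that
\[
S_q=\inf_{\mathcal{R}\setminus\{0\}}\frac{\int_{\mathds{R}^N_+}(1+x_N)^\gamma|\nabla u|^p\,\mathrm{d}x}{\big(\int_{\mathds{R}^{N-1}}|u|^q\,\mathrm{d}x'\big)^{p/q}}.
\]
For $u\in C^\infty_\delta(\mathds{R}^N_+)$, I replace $|u|$ by its Schwarz symmetrization in the $x'$ variable, taken separately on each slice $\{x_N=t\}$; call the result $u^\star$.
\begin{itemize}
\item Equimeasurability on every slice, in particular on $\{x_N=0\}$, keeps $\int_{\mathds{R}^{N-1}}|u|^q\,\mathrm{d}x'$ unchanged.
\item The weight depends only on $x_N$. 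The Steiner-type P\'olya--Szeg\H{o} inequality for partial (cylindrical) symmetrization, due to Brock, gives $\int\phi(x_N)|\nabla u^\star|^p\le\int\phi(x_N)|\nabla u|^p$ for such weights, so the numerator does not increase.
\end{itemize}
Since $u^\star\in\mathcal{R}$, with a density argument to pass from $C^\infty_\delta(\mathds{R}^N_+)$ to $\mathcal{D}^{1,p}_\rho(\mathds{R}^N_+)$, the two infima coincide. Here the quotient is read with the weight $(1+x_N)^\gamma$, i.e.\ $\rho(x_N)=(1+x_N)^\gamma$, which satisfies $(\rho_0)$.

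\emph{Step 2: minimizing sequence.} Take $(u_n)\subset\mathcal{R}$ with $\|u_n\|_{L^q(\mathds{R}^{N-1})}=1$ and $\|u_n\|^p\to S_q$. This sequence is bounded in the reflexive space $\mathcal{D}^{1,p}_\rho(\mathds{R}^N_+)$, which is uniformly convex as a weighted $L^p$ norm of the gradient. The set $\mathcal{R}$ is a closed subspace, hence weakly closed, so up to a subsequence $u_n\rightharpoonup u$ in $\mathcal{R}$.

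\emph{Step 3: pass to the limit.} By Theorem \ref{compa1}, $u_n\to u$ strongly in $L^q(\mathds{R}^{N-1})$, so $\|u\|_{L^q(\mathds{R}^{N-1})}=1$ and in particular $u\ne0$. Weak lower semicontinuity of the norm gives $\|u\|^p\le\liminf\|u_n\|^p=S_q$. Combined with the definition of $S_q$, this yields equality, so $u$ is a minimizer.

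The main obstacle is Step 1: justifying the weighted P\'olya--Szeg\H{o} inequality for partial symmetrization. It matters that the weight is independent of $x'$ and that the trace on $x_N=0$ is preserved. I will invoke Brock's results on Steiner symmetrization with $x_N$-dependent weights, applying them first to smooth compactly supported functions and then extending by density. If this route proves awkward, a fallback is concentration-compactness in the $x'$-directions: translation invariance in $x'$ rules out vanishing only modulo translations, and the subcritical range together with Theorem \ref{theo1} rules out dichotomy. The symmetrization route is cleaner given Theorem \ref{compa1}.
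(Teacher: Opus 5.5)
Your proposal is correct and follows essentially the same route as the paper. The paper also symmetrizes in $x'$ slice by slice, uses a weighted P\'olya--Szeg\H{o} inequality (citing Lemma~2.2 of \cite{NB} rather than Brock) to reduce to a minimizing sequence in $\mathcal{R}$ normalized in $L^q(\mathds{R}^{N-1})$, and then concludes with the compact embedding of Theorem~\ref{compa1} and weak lower semicontinuity of the norm, handling $S_s$ the same way.
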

\begin{remark}\label{remarque optimalité}
Similarly as in Theorem 1.3 in \cite{NB}, we can show that in case $q=p$ or $s=p$ the constant is not achieved. Observe also that as a consequence of Lemma 2.1 in \cite{Abreu-Furtado-Medeiros2}, the constants $S_q$ and $S_s$ are equal $0$ when $\gamma\leq p-1$ and $q=s=p$ whereas they are positive in case of $\gamma>p-1$ as a consequence of \eqref{Hardy p}. Due to classical trace and Sobolev embeddings and from \eqref{Hardy p}, the constants $S_{p_*}$ and $S_{p^*}$ are also positive but not achieved due to regularity and nonexistence results (see Theorem \ref{Nonexistence1}). These observations highlight that Theorem \ref{bestcons} is sharp.
\end{remark}
 
The proof of Theorem \ref{bestcons} is given in Section \ref{secEmb}. We now define the notion of a weak solution that we adopt in the present paper: 

\begin{definition}\label{weak solution}
We call a weak solution of \eqref{PG} a function $u\in \mathcal{D}^{1,p}_\rho(\mathds{R}^N_+)$ such that:
\begin{equation}\label{varFor}
\int_{\mathds{R}^N_+} \rho(x_N) |\nabla u|^{p-2}\nabla u.\nabla \varphi \, \mathrm{d}x =a\int_{\mathds{R}^N_+}|u|^{s-2}u\varphi \, \mathrm{d}x +b\int_{\mathds{R}^{N-1}}|u|^{q-2}u\varphi\, \mathrm{d}x^\prime, 
\end{equation}
for every function $\varphi\in C^\infty_\delta(\mathds{R}^N_+)$.
\end{definition}
In order to, recover the boundary conditions, we establish higher order regularity for nonnegative weak solutions of \eqref{PG}. The proofs of these regularity results are given in section \ref{secReg}. The first result concerns the H\"older regularity:

\begin{theorem}\label{C1alpha regularity}
Let $p\leq N$. Assume that condition $(\rho_0)$ holds together with $\rho \in C^{0,\beta}_{\mathrm{loc}}[0,\infty)$ for some $\beta\in (0,1)$. 
If $u$ is a weak solution, then $u \in C^{1,\alpha}_{\mathrm{loc}}(\overline{\mathds{R}^N_+})\cap L^\infty(\mathds{R}^N_+)$ for some $\alpha \in (0, 1)$ and $u$ verifies the boundary condition in \eqref{PG} pointwisely, 
in each of the following cases:
\begin{itemize}

\item[$(i)$] $u \in E_q(\mathds{R}^{N-1})$, with $a > 0$, $b \leq 0$, $q \in (1,\infty)$ and $\gamma>0$ with $s \in (s_\gamma,p^\ast)$ or alternatively $\gamma\geq p$ with $s \in [p,p^\ast)$;
\item[$(ii)$] $u \in E_q(\mathds{R}^{N-1})$, with $\gamma \ge 0$, $p<N$, $a > 0$, $b \leq 0$, $s = p^\ast$, and $q \in (1,\infty)$;
\item[$(iii)$] $u \in E_s(\mathds{R}^N_+)$, with $a \leq 0$, $b > 0$, $s \in  (1,\infty)$ and $\gamma>0$ with  $q \in (q_\gamma, p_\ast)$ or alternatively $\gamma>p-1$ with $q \in [p, p_\ast)$;
\item[$(iv)$] $u \in E_s(\mathds{R}^N_+)$, with $\gamma \ge 0$, $p<N$, $a \leq 0$, $b > 0$, $s \in (1,\infty)$, and $q = p_\ast$.
\end{itemize}
\end{theorem}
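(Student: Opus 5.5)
The plan is to follow the classical scheme for quasilinear problems with nonlinear Neumann data: first show $u\in L^\infty(\mathds{R}^N_+)$ together with $u|_{\mathds{R}^{N-1}}\in L^\infty(\mathds{R}^{N-1})$ by Moser iteration, and then invoke the boundary regularity theory of Lieberman for operators of $p$-Laplacian type with conormal boundary conditions. The operator $A(x,\xi)=\rho(x_N)|\xi|^{p-2}\xi$ satisfies Lieberman's structure conditions locally, because $\rho$ is locally bounded below and belongs to $C^{0,\beta}_{\mathrm{loc}}$. Once $u$ is bounded, the right-hand sides $a|u|^{s-2}u$ and $b|u|^{q-2}u$ are bounded on compact sets. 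Lieberman's result then gives $u\in C^{1,\alpha}_{\mathrm{loc}}(\overline{\mathds{R}^N_+})$. The pointwise boundary condition follows by integrating \eqref{varFor} by parts against test functions supported near the boundary: interior regularity shows the equation holds in the distributional sense, and continuity of $\nabla u$ up to $x_N=0$ identifies $-|\nabla u|^{p-2}\partial_{x_N}u$ with $b|u|^{q-2}u$.

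The core of the argument is the $L^\infty$ bound. Test \eqref{varFor} with $\varphi=u\min(|u|,L)^{p(\beta-1)}$ (or with $|u|^{p(\beta-1)}u$ truncated), approximating by elements of $C^\infty_\delta$ via density in the relevant space. In case $(i)$, $b\le 0$, so the boundary term has a good sign and is dropped. The interior term $a\int|u|^{s-2}u\varphi$ is controlled by the embedding $\mathcal{D}^{1,p}_\rho\hookrightarrow L^{s}$ from Theorem \ref{theo1}; for $\gamma\ge p$ one uses Proposition \ref{embedhardy} instead. The gradient term gives $\int\rho|\nabla (|u|^{\beta-1}u)|^p$ up to $\beta$-dependent constants.

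To iterate, use H\"older in the form $\int |u|^{s-p}|w|^p\le \|u\|_{L^{s}(\{|u|>M\})}^{s-p}\|w\|^p_{L^s}$ with $w=|u|^{\beta-1}u$. Choosing $M$ large absorbs this term, since $s<p^*$ for the subcritical cases, and since the tail of $u$ in $L^{p^*}$ is small in the critical case $(ii)$. This is the Brezis–Kato / Trudinger argument. The Sobolev gain from $L^{s}$ then yields $u\in L^{r}$ for all $r<\infty$. Afterwards, a standard Moser iteration with exponents $\beta_k=\chi^k$, $\chi=s/p>1$, gives $\|u\|_\infty<\infty$ with geometrically controlled constants. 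The set $\{|u|\le M\}$ causes no trouble: there the term is bounded by $M^{s-p}\int |w|^p$, and for $\gamma$ small this is handled locally on bounded sets using the unweighted embedding \eqref{GNS} with $\rho\ge C_0$.

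Cases $(iii)$ and $(iv)$ are symmetric. Now $a\le0$, so the interior term has the good sign. The boundary term $b\int_{\mathds{R}^{N-1}}|u|^{q-p}|w|^p$ is handled by the trace embedding of Theorem \ref{theo1} for $q\in(q_\gamma,p_*)$, and by \eqref{Trace-inequality} in the critical case $q=p_*$, in the same Brezis–Kato fashion. This yields bounds on $u$ in $L^\infty$ of the trace. Boundedness in the interior then follows by a further iteration with the now-bounded Neumann data, or by the comparison/De Giorgi argument in Lieberman.

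I expect the main obstacle to be the lower-order "$\{|u|\le M\}$'' and $L^p$-type terms produced during the iteration. The space $\mathcal{D}^{1,p}_\rho$ does not embed into $L^p$ when $\gamma\le p-1$, so a term like $\int|w|^p$ is not controlled by $\|w\|$ globally. This is exactly why the hypotheses demand $u\in E_q$ or $u\in E_s$. I would therefore run the iteration in the intersection norm, carrying $\|w\|_{L^q(\mathds{R}^{N-1})}$ (resp. $\|w\|_{L^s}$) along, and use interpolation between the known integrability of $u$ and the Sobolev exponent to close each step.
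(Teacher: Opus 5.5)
Your architecture matches the paper's. You prove an $L^\infty$ bound by Moser iteration, with Brezis--Kato absorption of the tail in the critical cases as in Lemma \ref{Linfty 2}. You then apply Lieberman's Theorem 2, use Green's formula for the pointwise Neumann condition, and in (iii)/(iv) you bound the trace first. The $L^\infty$ step, however, does not work as you describe it.

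\textbf{The obstacle you single out is not real, and your remedy would not remove it.} In case (i) the low-set term $M^{s-p}\int_{\{|u|\le M\}}|w|^p$ is an interior integral, so carrying $\|w\|_{L^q(\mathds{R}^{N-1})}$ along cannot control it. Symmetrically, $\|w\|_{L^s(\mathds{R}^N_+)}$ cannot control the boundary low-set term in (iii). Moreover $\{|u|\le M\}$ is not a bounded set, so no local argument applies there. The term is simply finite once the first exponent is chosen well. Since $\rho\ge C_0$, \eqref{GNS} gives $u\in L^{p^*}(\mathds{R}^N_+)$ for every $\gamma\ge 0$. In Lemma \ref{Linfty 2} the paper starts with $(d_1+1)p=p^*$, so that $\int|\psi_{k,d}|^p\le\int|u|^{p^*}<\infty$, and each later step uses only the integrability gained at the previous one. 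The hypotheses $u\in E_q$ (resp. $u\in E_s$) are there only because $q$ (resp. $s$) ranges over all of $(1,\infty)$. They make the boundary (resp. interior) term of \eqref{varFor} meaningful and allow truncated powers of $u$ as test functions. They have nothing to do with controlling $\int|w|^p$.

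\textbf{Your gain ratio $\chi=s/p$ degenerates when $s=p$.} Case (i) allows $s=p$ for $\gamma\ge p$, and likewise (iii) allows $q=p$ for $\gamma>p-1$. There the weighted embedding into $L^s=L^p$ gives no improvement, and your H\"older split $\|u\|^{s-p}_{L^s(\{|u|>M\})}$ carries no small factor. The paper instead takes the gain from the unweighted inequalities \eqref{GNS} and \eqref{Trace-inequality}, available because $\rho\ge C_0$. In the subcritical case it then needs no absorption at all. It bounds $\int|u|^{p(k+1)}|u|^{s-p}\le\|u\|_{(k+1)\zeta}^{p(k+1)}\|u\|_{p^*}^{s-p}$ with $p/\zeta+(s-p)/p^*=1$ and $\zeta\in[p,p^*)$, and iterates with ratio $p^*/\zeta>1$. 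The weighted embedding of Theorem \ref{inj2} is used only to justify the test functions.

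\textbf{Interior bound in (iii)/(iv).} Lieberman's regularity theorem presupposes a bounded solution, so it cannot supply the interior bound. Your extra iteration with bounded Neumann data would work, but the paper does this in one line. It tests with $(u-k)_+$ and $(-u-k)_+$ for $k>\|u\|_{L^\infty(\mathds{R}^{N-1})}$. The boundary term vanishes, and $a\le 0$ forces $\int_{\{u>k\}}\rho|\nabla u|^p\le 0$.
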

A key point in the proof of Theorem \ref{C1alpha regularity} is the uniform bound obtained by Moser type iterations technique, thanks to trace and Sobolev embeddings in Theorem \ref{theo1}. Next, we are concerned with Sobolev regularity. Precisely, using the quotient difference method, we establish:

\begin{theorem}\label{W2regularity}
Let $p\leq N$. Assume that condition $(\rho_0)$ holds together with $\rho \in W^{1,\infty}_{\mathrm{loc}}(0,\infty)$.
Let $u$ be a weak solution of \eqref{PG}. 
Then, in each of the following cases 
\begin{itemize}

\item[$(i)$] $u \in E_q(\mathds{R}^{N-1})$, with $a > 0$, $b \leq 0$, $q \in (1,\infty)$ and $\gamma>0$ with $s \in (s_\gamma,p^\ast)$ or alternatively $\gamma\geq p$ with $s \in [p,p^\ast)$;
\item[$(ii)$] $u \in E_q(\mathds{R}^{N-1})$, with $\gamma \ge 0$, $p<N$, $a > 0$, $b \leq 0$, $s = p^\ast$, and $q \in (1,\infty)$;
\item[$(iii)$] $u \in E_s(\mathds{R}^N_+)$, with $a \leq 0$, $b > 0$, $s \in  (1,\infty)$ and $\gamma>0$ with  $q \in (q_\gamma, p_\ast)$ or alternatively $\gamma>p-1$ with $q \in [p, p_\ast)$;
\item[$(iv)$] $u \in E_s(\mathds{R}^N_+)$, with $\gamma \ge 0$, $p<N$, $a \leq 0$, $b > 0$, $s \in (1,\infty)$, and $q = p_\ast$;
\end{itemize}
the following regularity result holds:
\begin{enumerate}
\item  If $1 < p \leq 2$, then $u \in W^{2,p}_{\mathrm{loc}}(\mathds{R}^N_+)$;
\item  If $p > 2$, then $u \in W^{2,2}_{\mathrm{loc}}(U)$, where $U:=\{x \in \mathds{R}^N_+: \nabla u(x)\neq 0\}.$
\end{enumerate}
\end{theorem}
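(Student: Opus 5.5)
We plan to prove Theorem \ref{W2regularity} by the difference quotient method, in the version for the $p$-Laplacian going back to Tolksdorf, Lou and Cianchi--Maz'ya, adapted to the weight $\rho(x_N)$.

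\textbf{Preliminary bounds.} By Theorem \ref{C1alpha regularity} (note $W^{1,\infty}_{\mathrm{loc}}\subset C^{0,\beta}_{\mathrm{loc}}$), in each of the cases $(i)$--$(iv)$ we have $u\in C^{1,\alpha}_{\mathrm{loc}}(\overline{\mathds{R}^N_+})\cap L^\infty(\mathds{R}^N_+)$. Fix a ball $B_{2R}(x_0)\Subset\mathds{R}^N_+$. On $B_{2R}(x_0)$ we then have $|\nabla u|\le M$, $0<c\le\rho\le C$ by $(\rho_0)$ and continuity, and $|\rho'|\le L$. The right-hand side $f:=a|u|^{s-2}u$ is bounded there. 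Since the ball is interior, the boundary term plays no role.

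\textbf{Differentiated equation.} For a direction $e_i$ and small $h$, let $\Delta_h v=(v(x+he_i)-v(x))/h$. Test \eqref{varFor} with $\varphi=\Delta_{-h}(\eta^2\Delta_h u)$, where $\eta$ is a cutoff supported in $B_{2R}$; this is admissible by density, since $u\in W^{1,\infty}$ near the ball. After discrete integration by parts we obtain
\[
\int \Delta_h\big(\rho|\nabla u|^{p-2}\nabla u\big)\cdot\nabla(\eta^2\Delta_h u)=-\int f\,\Delta_{-h}(\eta^2\Delta_h u).
\]
We split $\Delta_h(\rho A(\nabla u))$, with $A(\xi)=|\xi|^{p-2}\xi$, as
\[
\rho(x_N+h\delta_{iN})\,\Delta_h A(\nabla u)+\Delta_h\rho\;A(\nabla u).
\]
The second piece is bounded by $LM^{p-1}$ because $\rho$ is locally Lipschitz. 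For the first piece we use the standard monotonicity estimate
\[
(A(\xi)-A(\zeta))\cdot(\xi-\zeta)\ge c\,(|\xi|+|\zeta|)^{p-2}|\xi-\zeta|^2
\]
and the companion bound $|A(\xi)-A(\zeta)|\le C(|\xi|+|\zeta|)^{p-2}|\xi-\zeta|$. Write $w_h=(|\nabla u(x+he_i)|+|\nabla u(x)|)^{p-2}$. The right-hand side is handled by $\|\Delta_{-h}\psi\|_{L^2}\le\|\nabla\psi\|_{L^2}$ and Young's inequality. Together these yield
\[
\int\eta^2 w_h|\Delta_h\nabla u|^2\le C\Big(1+\int_{B_{2R}}|\Delta_h u|^2+\int \eta^2|\Delta_h\nabla u|\Big).
\]

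\textbf{Case $p\le2$.} Here $w_h\ge (2M)^{p-2}$, so the weight is bounded below. Absorbing the last term via Young's inequality gives $\int\eta^2|\Delta_h\nabla u|^2\le C$ uniformly in $h$. Hence $u\in W^{2,2}_{\mathrm{loc}}\subset W^{2,p}_{\mathrm{loc}}$ for $p\le2$.

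If one prefers to avoid the $C^{1,\alpha}$ input, a more careful route is available: apply Hölder's inequality as
\[
\int|\Delta_h\nabla u|^p\le\Big(\int w_h|\Delta_h\nabla u|^2\Big)^{p/2}\Big(\int(|\nabla u(x+he_i)|+|\nabla u|)^p\Big)^{(2-p)/2},
\]
which gives directly $\|\Delta_h\nabla u\|_{L^p}$ bounded and hence $W^{2,p}_{\mathrm{loc}}$.

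\textbf{Case $p>2$.} Now $w_h\ge|\nabla u(x)|^{p-2}$. The main obstacle is absorbing the term $\int\eta^2|\Delta_h\nabla u|$, since the weight degenerates where $\nabla u$ vanishes. We would first bound it by
\[
\varepsilon\int\eta^2 w_h|\Delta_h\nabla u|^2+C_\varepsilon\int_{\mathrm{supp}\,\eta}w_h^{-1},
\]
but $w_h^{-1}$ may blow up. To avoid this, we restrict to compact subsets $K\Subset U$, that is $K\subset\{\nabla u\neq0\}$. By continuity of $\nabla u$ there is $\delta>0$ with $|\nabla u|\ge\delta$ on a neighborhood of $K$. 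Choosing $\eta$ supported in that neighborhood and $h$ small, we get $w_h\ge\delta^{p-2}$. The case $p\le2$ argument then applies verbatim and gives $\sup_h\|\Delta_h\nabla u\|_{L^2(K)}<\infty$. Hence $u\in W^{2,2}_{\mathrm{loc}}(U)$, as claimed in item 2 of the theorem.
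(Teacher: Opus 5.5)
Your proof has a gap for $1<p<2$. It sits in the cross term coming from $\nabla(\eta^2)\,\Delta_h u$, namely $2\int\rho(x+he_i)\,\eta\,\Delta_h A(\nabla u)\cdot\nabla\eta\,\Delta_h u\,\mathrm{d}x$. With $|\Delta_h A(\nabla u)|\le C w_h|\Delta_h\nabla u|$ and Young's inequality, this term is bounded by $\varepsilon\int\eta^2 w_h|\Delta_h\nabla u|^2+C_\varepsilon\int|\nabla\eta|^2 w_h|\Delta_h u|^2$. Your key inequality then replaces the last integral by $C\int_{B_{2R}}|\Delta_h u|^2$, and that step needs an \emph{upper} bound on $w_h$.

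Such a bound holds for $p\ge 2$, where $w_h\le(2M)^{p-2}$. For $p<2$ you only have $w_h\ge(2M)^{p-2}$. The weight $w_h=(|\nabla u(x+he_i)|+|\nabla u(x)|)^{p-2}$ blows up near the critical set of $u$. It is even $+\infty$ where $\nabla u(x)=\nabla u(x+he_i)=0$, while $\Delta_h u(x)=\int_0^1\partial_i u(x+the_i)\,\mathrm{d}t$ need not vanish there. Nothing you have gives integrability of $(|\nabla u(x)|+|\nabla u(x+he_i)|)^{p-2}$, let alone uniformly in $h$.

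So for $p<2$ your key inequality is not established. The lower bound on $w_h$ only helps in the absorption step, not with this term. Your H\"older ``more careful route'' presupposes the same uniform bound on $\int\eta^2 w_h|\Delta_h\nabla u|^2$, so it inherits the gap. Only $p=2$, where $w_h\equiv 1$, goes through as written. The conclusion $u\in W^{2,2}_{\mathrm{loc}}$ for $p\le2$ is in fact true once $\nabla u$ is locally bounded, but your argument does not yet prove it.

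The paper avoids this computation: for $p\le 2$ it simply invokes de Th\'elin's $W^{2,p}_{\mathrm{loc}}$ theorem. To make your approach self-contained, do not let the difference quotient fall on $A(\nabla u)$ in the cross term. Discrete integration by parts gives $\int\Delta_h(\rho A(\nabla u))\cdot\nabla(\eta^2)\,\Delta_h u=-\int\rho A(\nabla u)\cdot\Delta_{-h}\bigl(\nabla(\eta^2)\,\Delta_h u\bigr)$. Since $|\rho A(\nabla u)|\le CM^{p-1}$ and $\Delta_{-h}\Delta_h u(x)=\int_0^1(\Delta_{-h}\partial_i u)(x+the_i)\,\mathrm{d}t$, this is at most $C(1+\|\Delta_{-h}\nabla u\|_{L^1})$ over a slightly larger ball. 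Combined with $w_h\ge(2M)^{p-2}$, the estimate then closes by a hole-filling/iteration argument over concentric balls. Alternatively, differentiate the regularized problem as in Lemma \ref{Lou}. There the corresponding term is $|\nabla\eta|^2(\varepsilon^2+|\nabla u_\varepsilon|^2)^{\frac{p-2}{2}}|\nabla u_\varepsilon|^2\le|\nabla\eta|^2(\varepsilon^2+|\nabla u_\varepsilon|^2)^{\frac p2}$, which is harmless.

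Your $p>2$ argument, by contrast, is correct and takes a different route from the paper's. You localize to $K\Subset U$, where $w_h\in[(2\delta)^{p-2},(2M)^{p-2}]$ and the problem is uniformly elliptic. The paper instead proves the global weighted bound $\int\tilde a(x,h)|D_h(\nabla u)|^2\varphi^\delta_\sigma\,\mathrm{d}x\le C$. It then deduces $|\nabla u|^{p/2-1}\nabla u\in W^{1,2}_{\mathrm{loc}}$ via Il'yasov--Tak\'a\v{c} and inverts $B$ on $U$. Your route is more elementary and suffices for item 2; the paper's gives global information on $|\nabla u|^{p/2-1}\nabla u$. Also, for $p\ge 2$ the $\rho'$ piece is no obstruction at critical points: $|\nabla u(x)|^{p-1}\le M|\nabla u(x)|^{p-2}\le Mw_h$, so weighted Young absorbs it, as the paper does with $I_1$.
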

Concerning the general case $a,b>0$, we show the following local regularity result:
\begin{theorem}\label{regularity a,b>0}
Let $p\leq N$. Assume that condition $(\rho_0)$ holds together with $\rho\in W^{1,\infty}_{\mathrm{loc}}(0,\infty)$ and that $a,b>0$. 
Let $u$ be a  weak solution of \eqref{PG}. 
Then, in each of the following cases 
\begin{itemize}
\item[$(i)$] $s \in [p,p^\ast]$, $q \in [p,p_\ast]$ with $p<N$ ($s\in [p,\infty)$, $q\in [p,\infty)$ if $p=N$) and $\gamma\geq p$;
\item[$(ii)$] $s \in (s_\gamma,p^\ast]$, $q \in [p,p_\ast]$ with $p<N$ ($s\in (s_\gamma,\infty)$, $q\in [p,\infty)$ if $p=N$) and $\gamma>p-1$;
\item[$(iii)$] $s \in (s_\gamma,p^\ast]$, $q \in (q_\gamma, p_\ast]$ with $p<N$ ($s\in (s_\gamma,\infty)$, $q\in (q_\gamma,\infty)$ if $p=N$) and $\gamma>0$;
\end{itemize}
we have $u\in C^{1,\alpha}_{\mathrm{loc}}(\mathds{R}^N_+)$ for some $\alpha \in (0,1)$. Moreover,
\begin{itemize}
\item [1.] If $1 < p \leq 2$, then $u \in W^{2,p}_{\mathrm{loc}}(\mathds{R}^N_+)$;
\item [2.] If $p > 2$, then $u \in W^{2,2}_{\mathrm{loc}}(U)$, where $U=\{x \in \mathds{R}^N_+: \nabla u(x)\neq 0\}$.
\end{itemize}
\end{theorem}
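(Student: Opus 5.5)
Since $a,b>0$ the sign structure used in Theorems \ref{C1alpha regularity} and \ref{W2regularity} is unavailable, and we prove only interior regularity. The plan is to first obtain local boundedness of $u$ in $\mathds{R}^N_+$, then invoke the classical $C^{1,\alpha}$ theory for degenerate quasilinear equations with locally Lipschitz coefficients, and finally run the difference quotient argument of Theorem \ref{W2regularity}.

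\emph{Step 1: integrability.} In each case (i)–(iii) we first check that $\mathcal{D}^{1,p}_\rho(\mathds{R}^N_+)\hookrightarrow L^s(\mathds{R}^N_+)$ and $\hookrightarrow L^q(\mathds{R}^{N-1})$, so that the weak formulation \eqref{varFor} makes sense. In case (i) this comes from \eqref{Hardy p} and Proposition \ref{embedhardy}. In case (ii) the trace part follows from \eqref{Hardy p} with $\gamma>p-1$, and the interior part from Theorem \ref{theo1}. In case (iii) both follow from Theorem \ref{theo1}. The critical endpoints $s=p^*$ and $q=p_*$ follow by interpolation with the classical inequalities \eqref{Trace-inequality} and \eqref{GNS}, using $\rho\ge C_0$.

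\emph{Step 2: local $L^\infty$ bound.} We restrict to balls $B_{2R}(x_0)\Subset\mathds{R}^N_+$. There $\rho\in W^{1,\infty}$ and $\rho$ is bounded above and below, so the equation reads $-\mathrm{div}(\rho|\nabla u|^{p-2}\nabla u)=a|u|^{s-2}u$ locally, and the boundary term is absent for test functions supported in the ball. We write $a|u|^{s-2}u=V(x)u$ with $V=a|u|^{s-2}$. Since $u\in L^{p^*}_{loc}$ (or $L^r_{loc}$ for every $r$ if $p=N$), we get $V\in L^{p^*/(s-p)}_{loc}$ with $p^*/(s-p)\ge N/p$.
\begin{itemize}
\item In the subcritical case $s<p^*$, the exponent exceeds $N/p$, and Serrin's local estimates give $u\in L^\infty_{loc}$.
\item In the critical case $s=p^*$, we instead run a Brezis–Kato/Moser iteration. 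We test with $\eta^p u\min(|u|,L)^{p(\beta-1)}$, absorb the critical term using the smallness of $\|u\|_{L^{p^*}(B_r)}$ on small balls, and get $u\in L^r_{loc}$ for all $r$, hence $L^\infty_{loc}$.
\end{itemize}
This critical case is the main obstacle: the absorption must be done carefully with the truncation and the local smallness of the $L^{p^*}$ norm.

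\emph{Step 3: $C^{1,\alpha}$ and second-order regularity.} With $u\in L^\infty_{loc}$, the right-hand side is locally bounded. The operator $\mathrm{div}(A(x,\nabla u))$ with $A(x,\xi)=\rho(x_N)|\xi|^{p-2}\xi$ satisfies the structure conditions of DiBenedetto/Tolksdorf/Lieberman, with Hölder (indeed Lipschitz) dependence on $x$. Hence $u\in C^{1,\alpha}_{loc}(\mathds{R}^N_+)$.

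For the second-order statements, we repeat the difference quotient argument from the proof of Theorem \ref{W2regularity}. We test with $\Delta_{-h}(\eta^2\Delta_h u)$ in the interior, using $\rho\in W^{1,\infty}_{loc}$ and the local boundedness of $\nabla u$ and of $|u|^{s-2}u$.
\begin{itemize}
\item For $1<p\le2$, the monotonicity inequality $\langle |\xi|^{p-2}\xi-|\zeta|^{p-2}\zeta,\xi-\zeta\rangle\ge c(|\xi|+|\zeta|)^{p-2}|\xi-\zeta|^2$, together with the boundedness of $\nabla u$, yields $W^{2,2}_{loc}$, hence $W^{2,p}_{loc}$.
\item For $p>2$, the same inequality gives $W^{2,2}_{loc}$ on $U=\{\nabla u\neq 0\}$, where $|\nabla u|$ is locally bounded below by continuity.
\end{itemize}
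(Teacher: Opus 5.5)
Your proposal is correct and follows essentially the paper's route: local boundedness via a Serrin/Moser-type iteration (the paper cites Pucci--Servadei, whose scheme handles the critical case $s=p^\ast$ in the same way as your Brezis--Kato step), then Lieberman's $C^{1,\alpha}$ theory, then the difference-quotient argument of Theorem \ref{W2regularity} (for $1<p\le 2$ the paper just invokes de Th\'elin). Two small corrections. First, you want $V=a|u|^{s-p}$, so that $a|u|^{s-2}u=V|u|^{p-2}u$ and $V\in L^{p^\ast/(s-p)}_{\mathrm{loc}}$. Second, for $1<p<2$ the $\nabla\eta$ cross term does not close by the Cauchy--Schwarz step used for $I_4$, because the weight $(|\nabla u(x)|+|\nabla u(x+h)|)^{p-2}$ need not be integrable near $\{\nabla u=0\}$; there you should move the difference quotient off $|\nabla u|^{p-2}\nabla u$ and use that the inverse weight $(|\nabla u(x)|+|\nabla u(x+h)|)^{2-p}$ is bounded, or simply cite de Th\'elin.
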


In section \ref{secReg}, following the approach in \cite{Ilyasov-Takac}, we also prove additional regularity of the nonlinear term $|\nabla u|^{p-1}$ (see Lemma \ref{Lou}). Paired with the $W^2_{\mathrm{loc}}$-regularity, this result provides Pohozaev identity and consequently nonexistence results, stated below and proved in Sections \ref{secPoho} and \ref{secNon} respectively. We now state existence results proved in Section \ref{secExi}:

\begin{theorem}\label{th1}
Let $(\rho_0)$ holds with $\gamma>0$ and $1<p\leq N$. Then, assuming one of the following cases:

\begin{itemize}
    \item[(i)] $a\leq 0$, $b>0$, $s\in(1,p_*)$, $q \in (\max(q_\gamma,s),p_*)$,
    
    \item [(ii)] $a>0$, $b\leq 0$, $q\in(1,p^*)$, $s\in (\max (s_\gamma,q),p^* ),$
\end{itemize}
there exists $u\in \mathcal{D}^{1,p}_\rho(\mathds{R}^N_+)\backslash\{0\}$,
a nonnegative weak solution to \eqref{PG}. Furthermore, $u\in  \mathcal{R}_s$ (resp. $u\in  \mathcal{R}_q$) if (i) holds (resp. (ii) holds). Moreover if $\rho \in W^{1,\infty}_{\mathrm{loc}}[0,\infty)$ then $u \in C^{1,\alpha}_{\mathrm{loc}}(\overline{\mathds{R}^N_+})\cap L^\infty(\mathds{R}^N_+)$ for some $\alpha \in (0,1)$, $u>0$ in $\overline{\mathds{R}^N_+}$ and verifies the boundary condition pointwisely. Furthermore, the regularity results of Theorem \ref{W2regularity} holds.

\end{theorem}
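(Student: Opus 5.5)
We work in case (i); case (ii) is symmetric, with the roles of the interior and boundary terms exchanged. The natural functional setting is the cylindrically symmetric space $\mathcal{R}_s$, equipped with the norm $\|u\|+\|u\|_{L^s(\mathds{R}^N_+)}$. The energy is
$$I(u)=\frac1p\int_{\mathds{R}^N_+}\rho(x_N)|\nabla u|^p\,\mathrm{d}x-\frac{a}{s}\int_{\mathds{R}^N_+}|u|^s\,\mathrm{d}x-\frac bq\int_{\mathds{R}^{N-1}}(u^+)^q\,\mathrm{d}x'.$$
Since $-a\ge 0$, we have
$$I(u)\ge \tfrac1p\|u\|^p+\tfrac{|a|}{s}\|u\|_{L^s}^s-\tfrac bq\|u\|_{L^q(\mathds{R}^{N-1})}^q .$$
By Theorem \ref{theo1}, which applies because $q\in(q_\gamma,p_*)$ and $(\rho_0)$ holds, we have $\|u\|_{L^q(\mathds{R}^{N-1})}\le C\|u\|$. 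Then $I$ is $C^1$ on $\mathcal{R}_s$. Because $q>p$ (note $q_\gamma\ge p$ when $\gamma\le p-1$, so $q>p$ in all cases), $I$ has a strict local minimum at $0$ on a small sphere. Next fix $\varphi\in\mathcal{R}_s$ with $\varphi^+\not\equiv0$ on the boundary. Along $t\varphi$ the terms scale like $t^p$, $t^s$ and $t^q$, and $q>\max(s,p)$, so $I(t\varphi)\to-\infty$. This gives the mountain pass geometry.

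The main obstacle is the Palais–Smale condition, and we argue in two steps. First, a (PS) sequence is bounded. Compute $I(u_n)-\frac1q I'(u_n)u_n$: it contains $(\frac1p-\frac1q)\|u_n\|^p+|a|(\frac1q-\frac1s)\|u_n\|_{L^s}^s$. Here $q>s$ makes the second coefficient negative, which is a real problem. We would therefore bound $\|u_n\|_{L^s}^s$ in terms of the other quantities: from $I'(u_n)u_n=o(\|u_n\|)$ we get $|a|\|u_n\|_{L^s}^s\le b\|u_n^+\|_q^q+o(\|u_n\|)$. As an alternative we would replace the combination by $I(u_n)-\frac1\theta I'(u_n)u_n$ with $\theta\in(\max(p,s),q)$, which makes all coefficients positive, since $\frac1s-\frac1\theta>0$ with $-a>0$ and $\frac1\theta-\frac1q>0$. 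This yields bounds on $\|u_n\|$ and on $\|u_n\|_{L^s}$.

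Second, we extract a convergent subsequence. Passing to a subsequence, Theorem \ref{compa1} gives $u_n\to u$ strongly in $L^q(\mathds{R}^{N-1})$. We then use the $(S_+)$ property of the weighted $p$-Laplacian. Testing with $u_n-u$ gives
$$\int\rho\big(|\nabla u_n|^{p-2}\nabla u_n-|\nabla u|^{p-2}\nabla u\big)\cdot\nabla(u_n-u)\,\mathrm{d}x+|a|\int\big(|u_n|^{s-2}u_n-|u|^{s-2}u\big)(u_n-u)\,\mathrm{d}x\to0,$$
and both terms are nonnegative. Hence $\nabla u_n\to\nabla u$ in $L^p_\rho$ and $u_n\to u$ in $L^s$, and the mountain pass theorem yields a critical point $u\ne0$ with $I(u)\ge c>0$. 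Criticality in $\mathcal{R}_s$ extends to all test functions by the principle of symmetric criticality, since $I$ is invariant under rotations in $x'$. Testing with $u^-$ gives $u\ge0$, because the boundary term involves only $u^+$. With the truncation, the Euler equation is exactly \eqref{varFor}.

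For the regularity statement, $u\in E_s(\mathds{R}^N_+)$, so Theorem \ref{C1alpha regularity}(iii) gives $u\in C^{1,\alpha}_{\mathrm{loc}}(\overline{\mathds{R}^N_+})\cap L^\infty$ and the pointwise boundary condition; the hypothesis $\rho\in W^{1,\infty}_{\mathrm{loc}}$ implies local Hölder continuity. Strict positivity $u>0$ in $\overline{\mathds{R}^N_+}$ follows from Vázquez's strong maximum principle applied to $-\mathrm{div}(\rho|\nabla u|^{p-2}\nabla u)+|a|u^{s-1}=0$. It needs $\int_0 (sF(t))^{-1/p}\,\mathrm{d}t=\infty$ with $F(t)\sim t^{s}$, which uses $s\ge p$; if $s<p$ this step is delicate and requires care. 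At the boundary, a Hopf-type argument applies, since $u(x',0)=0$ would force $\partial_{x_N}u<0$, contradicting $-|\nabla u|^{p-2}\partial_{x_N}u=b u^{q-1}=0$. Finally, Theorem \ref{W2regularity}(iii) gives the stated $W^{2}$ regularity.
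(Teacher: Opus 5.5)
Apart from the positivity step, your argument is the paper's. It uses the mountain pass theorem on $\mathcal{R}_s$ (resp.\ $\mathcal{R}_q$) and bounds Palais--Smale sequences through $I(u_n)-\frac1\theta I'(u_n)u_n$ with $\theta\in(\max(p,s),q)$, which is the paper's $k$. Compactness comes from Theorem~\ref{compa1}, and regularity from Theorems~\ref{C1alpha regularity} and~\ref{W2regularity}.

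Where you differ, you are tighter than the paper. Truncating the boundary term and testing with $u^-$ replaces the appeal to a nonnegative Palais--Smale sequence from \cite[Remark 4.1]{NB}. Your $(S_+)$ argument gives strong convergence and $I(u)=c>0$ directly. The paper instead asserts $J'(u)=0$ from weak convergence alone, and gets $u\neq 0$ from $\|u_n\|_{L^q(\mathds{R}^{N-1})}\geq C$. You also make the symmetric-criticality step explicit. One caveat you share with the paper: both arguments silently use $a<0$ to bound $\|u_n\|_{L^s(\mathds{R}^N_+)}$. When $a=0$ that bound, and the positive lower bound on the mountain-pass sphere in the $\mathcal{R}_s$-norm, are lost, so in that subcase one should simply work in $\mathcal{R}$.

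The step you flag is a genuine gap, and the paper's proof does not close it either. The paper cites Trudinger's Harnack inequality and V\'azquez's strong maximum principle for $u>0$. Writing $|a|u^{s-1}=c(x)\,u^{p-1}$ with $c=|a|u^{s-p}$, Harnack needs $c$ locally bounded, or at least in a suitable $L^t_{\mathrm{loc}}$ class, and this is only guaranteed for $s\geq p$. V\'azquez's criterion $\int_{0^+}F(t)^{-1/p}\,\mathrm{d}t=\infty$, with $F(t)=|a|t^{s}/s$, fails exactly when $s<p$. In that regime $-\Delta_p u+c\,u^{s-1}=0$ has compactly supported dead-core solutions; in one dimension, $C(x_0-x)_+^{p/(p-s)}$ for a suitable $C$.

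So in case (i) with $a<0$ and $s<p$, neither argument establishes $u>0$ in $\overline{\mathds{R}^N_+}$. This part of the statement seems to need $s\geq p$ (or $a=0$), or a genuinely new idea. Your boundary Hopf step carries the same restriction. Also, at a boundary zero Hopf gives $\partial_{x_N}u>0$, not $<0$, though the contradiction with the boundary condition is unaffected. In case (ii), $u$ is a nonnegative supersolution, so interior positivity and the Hopf argument at the boundary go through without restriction.
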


In the case $a\leq 0$ and $b\leq 0$ we use the framework of $E_s(\mathds{R}^N_+)$ and $E_q(\mathds{R}^{N-1})$ respectively to establish Theorem \ref{th1}. 
While when $a,b>0$ we show:

\begin{theorem}\label{th3}
Assume $(\rho_0)$ with $\gamma>0$, $a>0$, $b>0$ and $1<p\leq N$. If $q\in (q_\gamma ,p_*)$, $s\in (s_\gamma ,p^*)$ then there exists a nontrivial nonnegative weak solution of \eqref{PG}, belonging to $\mathcal{R}$. Moreover if $\gamma>p$ and if one of the following holds
\begin{itemize}
\item $q=p$, $s\in (p,p^*)$, $b/C_0<C_{p,\gamma}^{p-1}$

\item $s=p$, $q \in (p,p_*)$, $a/C_0<C_{p,\gamma}^{p}$,
\end{itemize}
where $C_{p,\gamma}=\frac{\gamma-p+1}{p}$ and $C_0$ is given in $(\rho_0)$, then there exists a nontrivial nonnegative weak solution of \eqref{PG}, belonging to $\mathcal{R}$. Furthermore, if $\rho\in W^{1,\infty}_{\mathrm{loc}}(0,\infty)$, $u>0$ in $\mathds{R}^N_+$ and satisfies regularity results of Theorem \ref{regularity a,b>0}.

\end{theorem}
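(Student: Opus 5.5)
The plan is to obtain a mountain pass solution of the energy functional restricted to the cylindrically symmetric subspace $\mathcal{R}$, and then use the principle of symmetric criticality to get a weak solution of \eqref{PG}. The functional is
$$I(u)=\frac1p\int_{\mathds{R}^N_+}\rho(x_N)|\nabla u|^p\,\mathrm{d}x-\frac{a}{s}\int_{\mathds{R}^N_+}(u^+)^s\,\mathrm{d}x-\frac{b}{q}\int_{\mathds{R}^{N-1}}(u^+)^q\,\mathrm{d}x'.$$
We work on $\mathcal{R}$. Since $s\in(s_\gamma,p^*)$ and $q\in(q_\gamma,p_*)$, Theorem \ref{theo1} with $k=0$ applies: $0<K(s)$ holds exactly because $s>s_\gamma$, and $(\rho_0)$ gives $(1+x_N)^\gamma\le C_0^{-1}\rho$. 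Hence $I$ is well defined and $C^1$ on $\mathcal{D}^{1,p}_\rho(\mathds{R}^N_+)$.

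\textbf{Mountain pass geometry.} The same embeddings give
$$I(u)\ge \tfrac1p\|u\|^p-C\|u\|^s-C\|u\|^q.$$
Since $s,q>p$ (note $s_\gamma\ge p$ and $q_\gamma\ge p$ when $\gamma>0$), $I$ stays bounded below by some $\alpha>0$ on a small sphere. For a fixed nonnegative radial $\varphi\in C^\infty_\delta$, we have $I(t\varphi)\to-\infty$ as $t\to\infty$ because $a>0$.

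\textbf{Palais--Smale condition on $\mathcal{R}$.} Take $\theta=\min(s,q)>p$. The quantity $I(u_n)-\frac1\theta I'(u_n)u_n$ bounds $(\frac1p-\frac1\theta)\|u_n\|^p$, so Palais--Smale sequences are bounded. Up to a subsequence $u_n\rightharpoonup u$ in $\mathcal{R}$. By Theorem \ref{compa1}, $u_n^+\to u^+$ strongly in $L^s(\mathds{R}^N_+)$ and in $L^q(\mathds{R}^{N-1})$. Then $\langle -\mathrm{div}(\rho|\nabla u_n|^{p-2}\nabla u_n)-(\cdots)(u),u_n-u\rangle\to0$, and the $(S_+)$ property of the weighted $p$-Laplacian (via the standard inequalities for $|\xi|^{p-2}\xi$) gives strong convergence. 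The mountain pass theorem then provides a critical point $u\neq0$ of $I|_{\mathcal{R}}$ with $I(u)\ge\alpha$.

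\textbf{From critical point to weak solution.} Since $\rho$ depends only on $x_N$, $I$ is invariant under the action of $O(N-1)$ on $x'$, which acts isometrically. Palais' principle of symmetric criticality then shows that $u$ is a critical point on the whole of $\mathcal{D}^{1,p}_\rho$, i.e.\ a weak solution of \eqref{varFor} with $u^+$ in place of $u$ in the nonlinearities. Testing with $u^-$ gives $\int\rho|\nabla u^-|^p=0$, so $u\ge0$ and $u$ solves \eqref{PG}.

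\textbf{The borderline cases with $\gamma>p$.} Take $q=p$ (the case $s=p$ is symmetric). The boundary term is no longer superlinear, so the geometry must come from \eqref{Hardy p} instead:
$$b\int_{\mathds{R}^{N-1}}|u|^p\le \frac{b}{C_0C_{p,\gamma}^{p-1}}\|u\|^p,$$
and the hypothesis $b/C_0<C_{p,\gamma}^{p-1}$ makes the quadratic part coercive. The Palais--Smale bound becomes
$$I-\tfrac1sI'u\ge(\tfrac1p-\tfrac1s)\Big(\|u\|^p-b\int|u|^p\Big)\ge c\|u\|^p.$$

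\textbf{Main obstacle.} Compactness is the hard step here, since the $L^p(\mathds{R}^{N-1})$ trace is not compact by Remark \ref{remarque optimalité}. I would handle it by the $(S_+)$ argument applied to the coercive operator $u\mapsto-\Delta_{p,\rho}u-b\,\delta|u|^{p-2}u$. Using the strict inequality, the term $\|u_n-u\|^p-b\|u_n-u\|^p_{L^p(\mathds{R}^{N-1})}$ controls $\|u_n-u\|^p$. The task is then to show this quantity tends to $0$ using only compactness of the $s$-term; a Brezis--Lieb splitting for both $p$-homogeneous pieces should achieve this.

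\textbf{Regularity.} Positivity and the regularity statements follow from Theorem \ref{regularity a,b>0} together with the strong maximum principle (Vázquez) applied locally.
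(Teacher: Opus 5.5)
\textbf{Overall.} Outside the borderline exponents your argument is the paper's: mountain pass on $\mathcal{R}$, Palais--Smale sequences bounded because $s,q>p$, compactness of the nonlinear terms from Theorem \ref{compa1}, and strong convergence in $\mathcal{R}$ via Proposition \ref{inegalg} and Corollary \ref{cvtool}. Your truncation by $u^+$ and your appeal to symmetric criticality make explicit two points the paper leaves tacit.

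\textbf{The gap.} It lies in the cases $\gamma>p$ with $q=p$ (or $s=p$), at exactly the step you call the main obstacle.
\begin{itemize}
\item A Brezis--Lieb splitting of $\int_{\mathds{R}^N_+}\rho|\nabla u_n|^p\,\mathrm{d}x$ needs $\nabla u_n\to\nabla u$ a.e. Weak convergence in $\mathcal{R}$ does not give this.
\item Subtracting $I'(u)u$ from $I'(u_n)u_n$ needs $I'(u)=0$. That in turn needs $|\nabla u_n|^{p-2}\nabla u_n\rightharpoonup|\nabla u|^{p-2}\nabla u$, which you do not supply.
\item Coercivity of $\|u\|^p-b\|u\|^p_{L^p(\mathds{R}^{N-1})}$ does not by itself give an $(S_+)$ property for the operator with the boundary term $b|u|^{p-2}u$ when $p\neq2$. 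The boundary part is a non-compact perturbation, and this energy is not even convex for $p\neq 2$. For $p>2$, take $u$ locally constant near a boundary patch; for $p<2$, take $u=Kx_N$ there.
\end{itemize}
The paper's one-line device, working with $(\|\cdot\|^p-b\|\cdot\|^p_{L^p(\mathds{R}^{N-1})})^{1/p}$, is the Hilbert-norm trick from the case $p=2$. For $p\neq2$ that quantity is not a norm, so the paper tacitly needs the same extra ingredient. Your suspicion that something more is required is therefore right, but your proposal does not provide it.

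\textbf{How to fill it.} The missing step is a localization that uses $I'(u_n)\to0$, not just weak convergence.
\begin{itemize}
\item Test $I'(u_n)$ with $\psi(u_n-u)$, where $\psi\in C^\infty_0(\mathds{R}^N)$ is radial in $x'$, $0\le\psi\le1$, $\psi=1$ on $B_R$ and $\mathrm{supp}\,\psi\subset B_{2R}$.
\item The sequence $(u_n)$ is bounded in $W^{1,p}(B^+_{2R})$. Rellich's theorem and compactness of the trace $W^{1,p}(B^+_{2R})\to L^p(\Gamma_{2R})$ then send $\int\rho|\nabla u_n|^{p-2}\nabla u_n\cdot\nabla\psi\,(u_n-u)\,\mathrm{d}x$ to $0$. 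The same holds for the non-compact term $b\int_{\mathds{R}^{N-1}}(u_n^+)^{p-1}\psi(u_n-u)\,\mathrm{d}x'$, since it only sees $\Gamma_{2R}$.
\item Hence $\int\psi\rho(|\nabla u_n|^{p-2}\nabla u_n-|\nabla u|^{p-2}\nabla u)\cdot\nabla(u_n-u)\,\mathrm{d}x\to0$. Proposition \ref{inegalg} then gives $\nabla u_n\to\nabla u$ in $L^p_{\mathrm{loc}}$, and a.e. after a diagonal extraction.
\item Now $I'(u)=0$ on $\mathcal{R}$. Your Brezis--Lieb computation becomes legitimate and yields $\|u_n-u\|^p-b\|u_n-u\|^p_{L^p(\mathds{R}^{N-1})}\le o(1)$. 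Then \eqref{Hardy p} together with $b/C_0<C_{p,\gamma}^{p-1}$ gives $u_n\to u$.
\end{itemize}
You can also avoid strong convergence altogether. If the weak limit were $u=0$, then $\|u_n\|_{L^s(\mathds{R}^N_+)}\to0$. With $I'(u_n)u_n\to0$ and coercivity this forces $\|u_n\|\to0$, contradicting $I(u_n)\to c>0$. The case $s=p$, $a/C_0<C_{p,\gamma}^p$ is identical, with Rellich handling $a\int(u_n^+)^{p-1}\psi(u_n-u)\,\mathrm{d}x$.
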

We now state the Pohozaev identity we establish for solutions to problem with more general weights and source terms. This identity is the core of the proof of nonexistence of nontrivial weak solutions to \eqref{PG} and is proved in Section \ref{secPoho}.

\begin{theorem}[Pohozaev identity]\label{Pohozaev} Let $f,g:\mathds{R}\longrightarrow\mathds{R}$ be continuous functions. Assume that $u$ satisfies regularity results of Theorem \ref{W2regularity} and that $u\,\in   C^{1}(\overline{\mathds{R}^N_+})$, is a weak solution of the problem
		\begin{align*}
			\left\{
			\begin{aligned}
				-\mathrm{div}(\rho(x)|\nabla u|^{p-2} \nabla u)&=f(u)&\quad&\text{ in }\quad\mathds{R}_{+}^{N},\\
				\rho(x^\prime,0)|\nabla u|^{p-2}\frac{\partial u}{\partial \nu}&=g(u)&\quad&\text{ on }\quad\mathds{R}^{N-1}.
			\end{aligned}
			\right.
		\end{align*}
		We suppose also that $|\nabla u|^r \in W^{1,1}_\mathrm{loc}(\mathds{R}^N_+)$ for some $r\in (1,p)$, $\rho(x)|\nabla u|^p\in L^1(\mathds{R}^N_+)$, where $\rho \in C^{1,\alpha}_{\mathrm{loc}}(\overline{\mathds{R}^N_+})$ for some $\alpha\in (0,1)$ and satisfies additionally for some $c>0$
        \begin{equation}\label{rho1}
            0<|\langle \nabla \rho(x), x\rangle|\leq c\rho(x),\quad \forall\, x\neq 0,
        \end{equation}
        $F(u)\in L^1(\mathds{R}^N_+)$, and $G(u)\in L^1(\mathds{R}^{N-1})$  where 
		\[
		F(t)=\int_0^tf(z)\mathrm{d}z, \ \ \mbox{ and }\ \ G(t)=\int_0^tg(z)\mathrm{d}z.
		\]
		Then, $u$ satisfies the following Pohozaev type identity
		\[
		\frac{N-p}{p}\int_{\mathds{R}_{+}^{N}}\rho(x)|\nabla u|^p\mathrm{d}x+\frac{1}{p}\int_{\mathds{R}_{+}^{N}}\langle \nabla \rho(x) , x\rangle |\nabla u|^p\mathrm{d}x=N\int_{\mathds{R}^N_+}F(u)\mathrm{d}x+(N-1)\int_{\mathds{R}^{N-1}}G(u)\mathrm{d}x^\prime.
		\]
	\end{theorem}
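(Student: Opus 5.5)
The plan is the classical Pohozaev argument: test the equation against the dilation field $x\cdot\nabla u$, localized by a cutoff. Since $u$ is only $C^1$ with $W^{2,p}_{\mathrm{loc}}$ or $W^{2,2}_{\mathrm{loc}}(U)$ regularity, the integrations by parts are justified through the extra information $|\nabla u|^r\in W^{1,1}_{\mathrm{loc}}$, following \cite{Ilyasov-Takac}.

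Fix $\psi\in C^\infty_0(\mathds{R}^N)$ with $0\le\psi\le1$, $\psi=1$ on $B_1$ and $\operatorname{supp}\psi\subset B_2$, and set $\psi_R(x)=\psi(x/R)$. We would like to use $\varphi=\psi_R\,\langle x,\nabla u\rangle$ in the weak formulation. This function is only $W^{1,1}_{\mathrm{loc}}$-type, so we proceed in three steps.

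\textbf{Step 1 (approximation).} We first show that the weak formulation extends to $\varphi$. Mollify $u$ tangentially, or use difference quotients in $x'$, and pass to the limit using three facts: $\rho|\nabla u|^{p-2}\nabla u\in C^0$ is bounded on compacts; the vector field $\rho|\nabla u|^{p-2}\nabla u$ has weak divergence $-f(u)\in L^\infty_{\mathrm{loc}}$; and its normal trace on $\mathds{R}^{N-1}$ is $g(u)$. The term $\partial_j(|\nabla u|^p)$ is handled by writing $|\nabla u|^p=(|\nabla u|^r)^{p/r}$ with $p/r>1$, so that $\partial_j|\nabla u|^p=\frac pr|\nabla u|^{p-r}\partial_j|\nabla u|^r\in L^1_{\mathrm{loc}}$. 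On $U$ this is the classical identity $\partial_j|\nabla u|^p=p|\nabla u|^{p-2}\nabla u\cdot\nabla\partial_j u$, and on the complement of $U$ both sides vanish a.e.

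\textbf{Step 2 (local identity).} We expand the left-hand side:
\[
\rho|\nabla u|^{p-2}\nabla u\cdot\nabla\langle x,\nabla u\rangle=\rho|\nabla u|^p+\tfrac1p\rho\,\langle x,\nabla(|\nabla u|^p)\rangle.
\]
Integrating the second term by parts against $\psi_R$ gives
\[
-\tfrac1p\int\big(N\rho+\langle\nabla\rho,x\rangle\big)\psi_R|\nabla u|^p-\tfrac1p\int\rho|\nabla u|^p\langle x,\nabla\psi_R\rangle.
\]
It also produces a boundary term, which vanishes because $x_N=0$ on $\mathds{R}^{N-1}$, so that $\langle x,\nu\rangle=0$ there. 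For the right-hand side, $f(u)\langle x,\nabla u\rangle=\langle x,\nabla F(u)\rangle$, which integrates to $-\int (N\psi_R+\langle x,\nabla\psi_R\rangle)F(u)$ with no boundary contribution, again because $x_N=0$. The boundary term $g(u)\langle x',\nabla_{x'}u\rangle=\langle x',\nabla_{x'}G(u)\rangle$ gives $-\int_{\mathds{R}^{N-1}}((N-1)\psi_R+\langle x',\nabla'\psi_R\rangle)G(u)\,\mathrm{d}x'$.

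\textbf{Step 3 (the limit $R\to\infty$).} The terms containing $\nabla\psi_R$ satisfy $|\langle x,\nabla\psi_R\rangle|\le C\chi_{\{R<|x|<2R\}}$, so they tend to $0$ by dominated convergence, using $\rho|\nabla u|^p$, $F(u)\in L^1(\mathds{R}^N_+)$ and $G(u)\in L^1(\mathds{R}^{N-1})$. The term $\int\langle\nabla\rho,x\rangle\psi_R|\nabla u|^p$ converges by \eqref{rho1}, since $|\langle\nabla\rho,x\rangle||\nabla u|^p\le c\rho|\nabla u|^p\in L^1$. Collecting the terms and multiplying by $-1$ yields
\[
\frac{N-p}{p}\int\rho|\nabla u|^p+\frac1p\int\langle\nabla\rho,x\rangle|\nabla u|^p=N\int F(u)+(N-1)\int G(u).
\]

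The main obstacle is Step 1. Near the critical set $\{\nabla u=0\}$ for $p>2$ we lack $W^{2,2}$ control, so $\varphi$ is not obviously an admissible test function and the chain rule for $|\nabla u|^p$ needs care. The first thing to try is the approach of \cite{Ilyasov-Takac}. Truncate by $\psi_R\,\langle x,\nabla u\rangle\,\theta_\varepsilon(|\nabla u|)$, where $\theta_\varepsilon$ vanishes for $|\nabla u|<\varepsilon$, and work on $U$, where $W^{2,2}_{\mathrm{loc}}$ holds. Then let $\varepsilon\to0$. The extra terms involving $\theta_\varepsilon'$ are controlled by $\varepsilon^{p-1}\cdot\varepsilon^{-1}\cdot\varepsilon\,|\nabla|\nabla u|^r|\,\varepsilon^{1-r}$, which is of order $\varepsilon^{p-r}\to0$ in $L^1_{\mathrm{loc}}$, using $|\nabla u|^r\in W^{1,1}_{\mathrm{loc}}$ with $r<p$.
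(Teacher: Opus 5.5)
Your proposal is correct and follows essentially the same route as the paper: the Il'yasov--Tak\'a\v{c} truncation near $\{\nabla u=0\}$ with an $O(\varepsilon^{p-r})$ error controlled through $|\nabla u|^r\in W^{1,1}_{\mathrm{loc}}$, then integration by parts of the dilation terms with $\langle x,\nu\rangle=0$ on $\mathds{R}^{N-1}$, and finally $R\to\infty$ using $\rho|\nabla u|^p, F(u), G(u)\in L^1$ and \eqref{rho1}. The only difference is framing, and it matters at the boundary: the paper proves the divergence identity for the combined field $\mathbf{v}_R$ only in the distributional sense in the interior (where the second-order terms cancel), and then applies a divergence theorem for a field continuous up to $\Gamma_R$ together with the pointwise Neumann condition; this avoids needing $W^{2}$ or $|\nabla u|^r\in W^{1,1}$ up to $\mathds{R}^{N-1}$, which your hypotheses do not give, so your test-function version needs an extra cutoff in $x_N$ (sent to zero using continuity of $\nabla u$ up to the boundary) to justify its $\varepsilon$-estimate and its integration by parts of $\langle x,\nabla|\nabla u|^p\rangle$.
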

For applying Theorem \ref{Pohozaev} in our setting, we assume the following condition on $\rho$ (that implies \eqref{rho1}):
\begin{enumerate}
\item [$(\rho_1)$]$\rho \in\, C^{1,\alpha}_{\mathrm{loc}}[0,\infty)$ for some $\alpha\in (0,1)$ and there exists a constant $c_1>0$ such that 
$$0< \rho^\prime(s)s\leq c_1 \rho(s), \quad \forall \, s>0.$$
\end{enumerate}
We now state nonexistence results proved in Section \ref{secNon}. Using regularity results of Theorems \ref{C1alpha regularity} and \ref{W2regularity} together with Theorem \ref{Pohozaev}, we establish first
\begin{theorem}\label{Nonexistence1}
Assume $(\rho_0)$ and $(\rho_1)$ with $\gamma>0$ and $p<N$. Then $u\equiv 0$ in the following cases:
\begin{itemize}
\item[$(i)$] $u$ is a weak solution in $E_q(\mathds{R}^{N-1})$, with $a > 0$, $b \leq 0$, $s = p^\ast$, and $q \in (1, p_\ast]$;
\item[$(ii)$] $u$ is a weak solution in $E_s(\mathds{R}^N_+)$, with $a \leq 0$, $b > 0$, $s \in (1,p^\ast]$, and $q = p_\ast$.
\end{itemize}
\end{theorem}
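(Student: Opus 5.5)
We will follow the standard Pohozaev scheme. Combine the identity of Theorem \ref{Pohozaev} with the energy identity obtained by testing the equation with $u$ itself, and use the sign of $\rho'$ from $(\rho_1)$ to force $\nabla u\equiv 0$.

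\textbf{Step 1: regularity and applicability of the identity.} Let $u$ be a weak solution as in case $(i)$ or $(ii)$. Theorem \ref{C1alpha regularity} (cases $(ii)$ and $(iv)$, which allow $s=p^*$, resp. $q=p_*$, and $\gamma\ge0$) gives $u\in C^{1,\alpha}_{\mathrm{loc}}(\overline{\mathds{R}^N_+})\cap L^\infty(\mathds{R}^N_+)$, with the boundary condition holding pointwise. Theorem \ref{W2regularity} gives the $W^{2}_{\mathrm{loc}}$ regularity, and Lemma \ref{Lou} gives $|\nabla u|^r\in W^{1,1}_{\mathrm{loc}}$ for some $r\in(1,p)$.

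We take $f(t)=a|t|^{s-2}t$ and $g(t)=b|t|^{q-2}t$ with $\rho(x)=\rho(x_N)$. Since $\rho(0)=1$, the boundary condition in \eqref{PG} matches the one in Theorem \ref{Pohozaev}, with $\nu$ the outward normal. Condition $(\rho_1)$ gives $\langle\nabla\rho(x),x\rangle=\rho'(x_N)x_N$, and $0\le \rho'(x_N)x_N\le c_1\rho$. Strict positivity in \eqref{rho1} fails on $\{x_N=0\}$, but that set has measure zero. We expect the proof of Theorem \ref{Pohozaev} to use only the upper bound, to get integrability of $\langle\nabla\rho,x\rangle|\nabla u|^p$, so we will check or adapt that point.

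The integrability hypotheses follow from the function spaces. In case $(i)$, $F(u)=\frac a{p^*}|u|^{p^*}\in L^1$ by Theorem \ref{theo1}/Sobolev (classical \eqref{GNS} applies since $\rho\ge C_0$), and $G(u)\in L^1(\mathds{R}^{N-1})$ because $u\in E_q(\mathds{R}^{N-1})$. Case $(ii)$ is symmetric, using $u\in E_s$ and the trace inequality \eqref{Trace-inequality} for $q=p_*$.

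\textbf{Step 2: energy identity.} We test \eqref{varFor} with $u$, justified by density of $C^\infty_\delta$ in the relevant intersection space and the integrability from Step 1. This gives
$$\int\rho|\nabla u|^p=a\int|u|^{s}+b\int_{\mathds{R}^{N-1}}|u|^q .$$
The Pohozaev identity reads
$$\frac{N-p}{p}\int\rho|\nabla u|^p+\frac1p\int \rho'(x_N)x_N|\nabla u|^p=\frac{Na}{s}\int|u|^s+\frac{(N-1)b}{q}\int|u|^q .$$

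\textbf{Step 3: case $(i)$.} Here $s=p^*$, so $\frac{N}{p^*}=\frac{N-p}{p}$. Multiplying the energy identity by $\frac{N-p}{p}$ and subtracting gives
$$\frac1p\int\rho'x_N|\nabla u|^p=b\Big(\frac{N-1}{q}-\frac{N-p}{p}\Big)\int_{\mathds{R}^{N-1}}|u|^q .$$
Since $b\le0$ and $q\le p_*$, we have $\frac{N-1}{q}\ge\frac{N-p}{p}$, so the right-hand side is $\le 0$. The left-hand side is $\ge0$, so both vanish. Because $\rho'(x_N)x_N>0$ for $x_N>0$, this forces $\nabla u=0$ a.e. Then $u$ is constant, and $u\in L^{p^*}$ gives $u\equiv0$.

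\textbf{Step 4: case $(ii)$.} Here $q=p_*$, so $\frac{N-1}{p_*}=\frac{N-p}{p}$. The same subtraction gives
$$\frac1p\int\rho'x_N|\nabla u|^p=a\Big(\frac Ns-\frac{N-p}{p}\Big)\int|u|^s\le0,$$
using $a\le0$ and $s\le p^*$. We then conclude exactly as in Step 3.

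The main obstacle is Step 1: verifying every hypothesis of Theorem \ref{Pohozaev}, chiefly the $W^{2}$ and Lemma \ref{Lou} regularity, and the degenerate positivity of $\rho'x_N$ at the boundary. The algebra in Steps 3 and 4 is routine.
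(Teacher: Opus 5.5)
Your proposal is correct and follows the paper's proof essentially step for step. You obtain regularity from Theorems \ref{C1alpha regularity}, \ref{W2regularity} and Lemma \ref{Lou} so that Theorem \ref{Pohozaev} applies, subtract $\frac{N-p}{p}$ times the energy identity (obtained by testing with $u$) so the critical exponent kills one coefficient and the sign conditions make the right-hand side nonpositive, and then let $\rho'(x_N)x_N>0$ force $u\equiv 0$. Your side remark is also accurate: the proof of the identity uses only the upper bound $|\langle\nabla\rho(x),x\rangle|\le c\rho(x)$ (for dominated convergence), so the vanishing of $\rho'(x_N)x_N$ on $\{x_N=0\}$ does no harm.
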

Next, assuming regularity of weak solutions,  the following general nonexistence results hold:
\begin{theorem}\label{Nonexistence2} Assume $(\rho_0)$ with $\gamma>0$ and $(\rho_1)$ and let $p<N$. Let $u\in \mathcal{D}^{1,p}_\rho(\mathds{R}^N_+)\cap C^{1}(\overline{\mathds{R}^N_+})\cap L^\infty(\mathds{R}^N_+)$, be a 
weak solution of \eqref{PG} satisfying Sobolev regularity results in Theorem \ref{W2regularity} and such that $|\nabla u|^r \in W^{1,1}_{\mathrm{loc}}(\mathds{R}^N_+)$ for some $r\in (1,p)$, when $p>2$. 
Then $u\equiv 0$ in the following cases:
\begin{itemize}
\item[$(i)$] $u\in E_q(\mathds{R}^{N-1})$, $a > 0$, $b \leq 0$, and $s \geq p^\ast$ and $q\in(1,p_\ast]$;
\item[$(ii)$] $u\in E_s(\mathds{R}^N_+)$, $a \leq 0$, $b > 0$, $s \in (1,p^\ast]$ and $q \geq p_\ast$;
\item [$(iii)$] $a>0$, $b>0$, $s\geq p^\ast$ and $q\geq p_\ast$.
\end{itemize}
\end{theorem}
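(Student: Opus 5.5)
Apply the Pohozaev identity of Theorem \ref{Pohozaev} with $f(t)=a|t|^{s-2}t$, $g(t)=b|t|^{q-2}t$ and $\rho(x)=\rho(x_N)$. Then combine it with the energy identity obtained by testing the weak formulation \eqref{varFor} against $u$ itself, and read off a sign contradiction.

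\textbf{Admissibility.} The hypotheses of Theorem \ref{Pohozaev} hold by assumption: $u\in C^1(\overline{\mathds{R}^N_+})$, the $W^2_{\mathrm{loc}}$ regularity, and $|\nabla u|^r\in W^{1,1}_{\mathrm{loc}}$ (for $p\le 2$ one uses the $W^{2,p}_{\mathrm{loc}}$ regularity directly, as in the paper's approach). Condition $(\rho_1)$ gives \eqref{rho1}, since $\langle\nabla\rho(x),x\rangle=\rho'(x_N)x_N$; this quantity is $>0$ in the open half-space, which is the set relevant for the integrals. The integrability of $F(u)=\frac{a}{s}|u|^s$ and $G(u)=\frac bq|u|^q$ follows case by case:
\begin{itemize}
\item In $(i)$, $u\in E_q(\mathds{R}^{N-1})$ gives the boundary term. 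For the interior term, boundedness $u\in L^\infty$ together with Sobolev gives $u\in L^{p^*}$ (since $(1+x_N)^\gamma\ge1$, $u$ lies in $\dot W^{1,p}$), hence $u\in L^s$ for $s\ge p^*$.
\item In $(ii)$ the argument is symmetric, using the classical trace inequality \eqref{Trace-inequality} and $L^\infty$.
\item In $(iii)$ both terms follow from the same two inequalities plus boundedness.
\end{itemize}
Testing with $u$ itself (by density, using a cutoff approximation justified by the integrability just obtained) yields
$$\int\rho|\nabla u|^p=a\int|u|^s+b\int_{\mathds{R}^{N-1}}|u|^q .$$

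\textbf{Combining the identities.} Multiply the energy identity by $\frac{N-p}{p}$ and subtract it from Pohozaev:
$$\frac1p\int\rho'(x_N)x_N|\nabla u|^p=a\Big(\frac Ns-\frac{N-p}{p}\Big)\int|u|^s+b\Big(\frac{N-1}{q}-\frac{N-p}{p}\Big)\int|u|^q .$$
The coefficient $\frac Ns-\frac{N-p}p$ is $\le0$ exactly when $s\ge p^*$, and $\frac{N-1}q-\frac{N-p}p$ is $\le0$ exactly when $q\ge p_*$. Each case then gives a right-hand side $\le 0$:
\begin{itemize}
\item In $(i)$, $a>0$ and $s\ge p^*$ make the first term $\le0$. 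For the second term with $b\le0$ and $q\le p_*$, the coefficient is $\ge0$, so $b\cdot(\text{coeff})\le0$.
\item Case $(ii)$ is symmetric.
\item In $(iii)$ both coefficients are $\le0$ and $a,b>0$.
\end{itemize}
The left side is $\ge0$, and it is strictly positive unless $\nabla u\equiv0$ a.e. in $\mathds{R}^N_+$, because $\rho'(x_N)x_N>0$ for $x_N>0$. Hence $\nabla u\equiv0$, so $u$ is constant. Finiteness of $\|u\|_{E}$, or membership in $\mathcal{D}^{1,p}_\rho$ with Sobolev, forces $u\equiv0$.

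\textbf{Main obstacle.} The delicate step is justifying the integrability hypotheses of Theorem \ref{Pohozaev} and the test with $u$ in the supercritical ranges $s>p^*$ or $q>p_*$. There $L^s$ or $L^q$ membership is not given by an embedding, and I would deduce it from $u\in L^\infty\cap L^{p^*}$ (resp.\ $L^\infty\cap L^{p_*}(\mathds{R}^{N-1})$) by interpolation. A second point to check is that $\int\rho'(x_N)x_N|\nabla u|^p$ is finite; this follows from $(\rho_1)$ and $\rho|\nabla u|^p\in L^1$. Note that strict positivity of $\rho'$ is what makes the argument work even when the right-hand side vanishes, e.g.\ at $s=p^*$, $q=p_*$.
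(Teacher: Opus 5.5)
Your proposal is correct and follows the paper's route. The paper writes out this argument for Theorem \ref{Nonexistence1} and states that Theorem \ref{Nonexistence2} is analogous: apply Theorem \ref{Pohozaev}, subtract $\frac{N-p}{p}$ times the energy identity obtained by testing with $u$, then conclude from $\rho'(x_N)x_N>0$ and the nonpositive right-hand side. You also supply several details the paper leaves implicit, each of them valid:
\begin{itemize}
\item integrability of $F(u)$ and $G(u)$ in the supercritical ranges via $L^\infty\cap L^{p^*}$ and $L^\infty\cap L^{p_*}(\mathds{R}^{N-1})$;
\item admissibility of $u$ as a test function;
\item the observation that \eqref{rho1} is only needed in the open half-space, since $\rho'(x_N)x_N$ vanishes on $\{x_N=0\}$;
\item the final step that $\nabla u\equiv 0$ together with $u\in L^{p^*}$ forces $u\equiv 0$.
\end{itemize}
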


Concerning the case $p\geq N$, we can also derive nonexistence results:
\begin{theorem}\label{nonexPleqN} Let $\rho$ verifying $\eqref{rho1}$ with $\rho'(x_N)< 0$ and $p\geq N$. Let $u\in \mathcal{D}^{1,N}_\rho(\mathds{R}^N_+)\cap C^{1}(\overline{\mathds{R}^N_+})\cap L^\infty(\mathds{R}^N_+)$, be a 
weak solution of \eqref{PG} satisfying Sobolev regularity results in Theorem \ref{W2regularity} and such that {$|\nabla u|^r \in W^{1,1}_{\mathrm{loc}}(\mathds{R}^N_+)$} for some $r\in (1,N)$, when $N>2$. Let $a,b>0$, $q,s>1$, if $u\in E_q(\mathds{R}^{N-1})\cap  E_s(\mathds{R}^N_+)$ then $u\equiv0$.
\end{theorem}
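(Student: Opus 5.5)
We apply the Pohozaev identity of Theorem \ref{Pohozaev} with $f(t)=a|t|^{s-2}t$ and $g(t)=b|t|^{q-2}t$, so that $F(u)=\frac{a}{s}|u|^s$ and $G(u)=\frac{b}{q}|u|^q$. We then combine it with the energy identity obtained by testing the weak formulation \eqref{varFor} with $u$ itself. The hypotheses of Theorem \ref{Pohozaev} are assumed here. The integrability requirements are $F(u)\in L^1(\mathds{R}^N_+)$ and $G(u)\in L^1(\mathds{R}^{N-1})$, which follow from $u\in E_q(\mathds{R}^{N-1})\cap E_s(\mathds{R}^N_+)$, and $\rho|\nabla u|^p\in L^1$, which follows from $u\in\mathcal{D}^{1,N}_\rho(\mathds{R}^N_+)$. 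One caveat: the statement writes $p\ge N$ but the space is $\mathcal{D}^{1,N}_\rho$. I read it as $p=N$, since the exponent must match. I would also write a remark that for $p>N$ the identical computation works provided $u\in\mathcal{D}^{1,p}_\rho$.

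First, the test with $u$ is obtained by density of $C^\infty_\delta(\mathds{R}^N_+)$ in $E_q(\mathds{R}^{N-1})\cap E_s(\mathds{R}^N_+)$. The integrals $\int |u|^{s}$ and $\int_{\mathds{R}^{N-1}}|u|^q$ are finite by assumption, so passing to the limit gives
\[
\int_{\mathds{R}^N_+}\rho|\nabla u|^p\,\mathrm{d}x=a\int_{\mathds{R}^N_+}|u|^s\,\mathrm{d}x+b\int_{\mathds{R}^{N-1}}|u|^q\,\mathrm{d}x'.
\]
Second, the Pohozaev identity with $p=N$ reads
\[
\frac{1}{N}\int_{\mathds{R}^N_+}\langle\nabla\rho,x\rangle|\nabla u|^N\,\mathrm{d}x=\frac{Na}{s}\int_{\mathds{R}^N_+}|u|^s\,\mathrm{d}x+\frac{(N-1)b}{q}\int_{\mathds{R}^{N-1}}|u|^q\,\mathrm{d}x',
\]
since the term $\frac{N-p}{p}\int\rho|\nabla u|^p$ vanishes. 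For $\rho=\rho(x_N)$ we have $\langle\nabla\rho,x\rangle=\rho'(x_N)x_N\le 0$, because $\rho'<0$. Hence the left-hand side is $\le 0$. The right-hand side is $\ge 0$ since $a,b>0$. Therefore both sides vanish. In particular $\int|u|^s=0$, so $u\equiv0$. The positivity $a,b>0$ is decisive here; no comparison between the exponents $s,q$ and $N$ is needed, so the energy identity serves only as a consistency check. In the general case $p>N$, the left-hand side becomes $\frac{N-p}{p}\int\rho|\nabla u|^p+\frac1p\int\rho'x_N|\nabla u|^p$. Both terms are $\le 0$, and the same conclusion follows.

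The main obstacle is justifying the application of Theorem \ref{Pohozaev}, which requires $\rho\in C^{1,\alpha}_{\mathrm{loc}}$ and \eqref{rho1}, the $W^{2}$-regularity and the $|\nabla u|^r\in W^{1,1}_{\mathrm{loc}}$ assumption. All of these are built into the hypotheses. One subtle point remains: \eqref{rho1} demands $|\langle\nabla\rho,x\rangle|>0$ for $x\neq0$, which fails on $\{x_N=0\}$. I would address it as in Section \ref{secPoho}, since that condition is only used to control the cut-off error terms $\int_{B_{2R}\setminus B_R}|\langle\nabla\rho,x\rangle||\nabla u|^p$ by $c\int\rho|\nabla u|^p$. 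For this, only the upper bound $|\rho'(x_N)x_N|\le c\rho$ is needed, so the vanishing on the boundary hyperplane is harmless.
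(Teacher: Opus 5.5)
Your proof is correct and is essentially the paper's argument: the paper handles this theorem by saying it goes like Theorem \ref{Nonexistence1}, i.e.\ apply the Pohozaev identity of Theorem \ref{Pohozaev} and conclude from the signs of the two sides. The only difference is that the paper first subtracts $\frac{N-p}{p}$ times the energy identity, leaving $\frac1p\int\rho'(x_N)x_N|\nabla u|^p\,\mathrm{d}x\le 0$ against the positive coefficients $\frac{N}{s}+\frac{p-N}{p}$ and $\frac{N-1}{q}+\frac{p-N}{p}$, whereas you observe that for $p\ge N$ the raw Pohozaev identity already has a nonpositive left-hand side, so testing with $u$ is unnecessary.
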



\section{New Sobolev and trace embeddings}\label{secEmb}

In this section, we give the proof of embedding results stated in the introduction, in particular Theorems \ref{theo1} and \ref{TruMos} implying new trace and Sobolev embeddings of the energy space $\mathcal{D}^{1,p}_\rho(\mathds{R}^N_+)$. We also establish compact embedding results given in Theorem \ref{compa} from which the proof of Theorem \ref{compa1} follows. These results are used subsequently for showing existence of nontrivial weak solutions, existence of extremal functions for best constants and regularity results. We start with Theorems \ref{inj2} and \ref{improvetheo2.2} given below. For sake of clarity, we work with $\rho(x_N)=(1+x_N)^\gamma$. Obviously these embeddings hold also for $\mathcal{D}^{1,p}_\rho(\mathds{R}^N_+)$ when $\rho$ satisfies $(\rho_0)$. We first consider the case $p\in (1,N)$. 
From \eqref{Trace-inequality} and the interpolation of \eqref{Hardy p} with \eqref{GNS}, it follows:

\begin{proposition}\label{embedhardy}
For all $p\in(1,N)$, $\gamma> p-1$, for all $q\in [p,p_*]$ we have $\mathcal{D}^{1,p}_\rho(\mathds{R}^N_+)\hookrightarrow  L^q(\mathds{R}^{N-1})$.
In addition, for $\gamma \geq p$, we have $\mathcal{D}^{1,p}_\rho(\mathds{R}^N_+)\hookrightarrow  L^s(\mathds{R}^{N}_+)$ for all $s\in [p,p^*]$.
\end{proposition}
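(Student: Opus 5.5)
The plan is to take $\rho(x_N)=(1+x_N)^\gamma$ and to prove both estimates first for $u\in C^\infty_\delta(\mathds{R}^N_+)$ with a constant depending only on $N,p,\gamma,q$ (resp. $s$). They then pass to $\mathcal{D}^{1,p}_\rho(\mathds{R}^N_+)$ by density. Any Cauchy sequence in $\|\cdot\|$ is then Cauchy in $L^q(\mathds{R}^{N-1})$ and in $L^s(\mathds{R}^N_+)$, so the limit objects are well defined and the inequalities persist. Throughout we write $\|\nabla u\|_\rho^p=\int_{\mathds{R}^N_+}(1+x_N)^\gamma|\nabla u|^p\,\mathrm{d}x$, and we use the elementary bound $(1+x_N)^\gamma\geq 1$, valid since $\gamma\geq 0$. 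This bound gives
$$\int_{\mathds{R}^N_+}|\nabla u|^p\,\mathrm{d}x\leq \|\nabla u\|_\rho^p .$$

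\textbf{Trace part ($\gamma>p-1$).} Since $C_{p,\gamma}>0$, inequality \eqref{Hardy p} drops its nonnegative bulk term and yields $\|u\|_{L^p(\mathds{R}^{N-1})}^p\leq C_{p,\gamma}^{1-p}\|\nabla u\|_\rho^p$. Combining \eqref{Trace-inequality} with the elementary bound gives $\|u\|_{L^{p_*}(\mathds{R}^{N-1})}^p\leq C(N,p)^{-1}\|\nabla u\|_\rho^p$. For $q\in[p,p_*]$ we choose $\theta\in[0,1]$ with $1/q=\theta/p+(1-\theta)/p_*$. Hölder's (Lyapunov's) interpolation inequality then gives
$$\|u\|_{L^q(\mathds{R}^{N-1})}\leq \|u\|_{L^p(\mathds{R}^{N-1})}^{\theta}\,\|u\|_{L^{p_*}(\mathds{R}^{N-1})}^{1-\theta}\leq C\,\|\nabla u\|_\rho ,$$
which is the first claim.

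\textbf{Bulk part ($\gamma\geq p$).} Here the exponent $p-\gamma\leq 0$, so $(1+x_N)^{\gamma-p}\geq 1$. Inequality \eqref{Hardy p} then gives
$$C_{p,\gamma}^p\int_{\mathds{R}^N_+}|u|^p\,\mathrm{d}x\leq C_{p,\gamma}^p\int_{\mathds{R}^N_+}\frac{|u|^p}{(1+x_N)^{p-\gamma}}\,\mathrm{d}x\leq \|\nabla u\|_\rho^p .$$
Inequality \eqref{GNS} together with the elementary bound gives $\|u\|_{L^{p^*}(\mathds{R}^N_+)}\leq S(N,p)^{-1/p}\|\nabla u\|_\rho$. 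Interpolating between $L^p(\mathds{R}^N_+)$ and $L^{p^*}(\mathds{R}^N_+)$ exactly as above yields $\|u\|_{L^s(\mathds{R}^N_+)}\leq C\|\nabla u\|_\rho$ for all $s\in[p,p^*]$.

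The argument is essentially routine. The only points needing care are these. First, the restriction $\gamma\geq p$ is precisely what allows the Hardy weight $(1+x_N)^{\gamma-p}$ to be bounded below by a positive constant; for $p-1<\gamma<p$ it decays, which is why that range is excluded. Second, \eqref{Trace-inequality} and \eqref{GNS} are stated for $C^\infty_0(\mathds{R}^N)$ functions, and they must be used with the gradient integrated only over the half-space. This is the correct form for $C^\infty_\delta(\mathds{R}^N_+)$.
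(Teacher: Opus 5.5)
Your proof is correct and follows the paper's route. The paper gets this proposition in one line "from \eqref{Trace-inequality} and the interpolation of \eqref{Hardy p} with \eqref{GNS}": the boundary term of \eqref{Hardy p} handles the trace part, and the bulk term (with $(1+x_N)^{\gamma-p}\geq 1$ when $\gamma\geq p$) handles the Sobolev part, exactly as you do, while you also spell out the details the paper leaves implicit (dropping nonnegative terms, $\rho\geq 1$, Lyapunov interpolation, density).
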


Using a different proof, we show a new and sharp Sobolev embedding that concerns the case $\gamma<p$. First, we have the following result:


\begin{theorem}\label{inj2}
Let $p<N$ and $\gamma>0$.


Then, the following statements hold:

\begin{itemize}
\item[(i)] $\mathcal{D}^{1,p}_\rho(\mathds{R}^N_+)\hookrightarrow L^s(\mathds{R}^N_+)$ for all $s\in [s_\gamma,p^*]$. 

\item[(ii)] For any $s\in (s_\gamma,p^*]$, let
$$K(s)\eqdef \begin{cases}
\frac{(N-p)(s(N-p+\gamma)-Np)}{p(Np-N\gamma-p)}, &\text{if } \gamma\in(0, p-1],\\
s/p(N-p+\gamma)-N, & \text{if } \gamma\in(p-1, \infty),\\
\end{cases}$$

Then, for all $k<K(s)$ (or $k\leq K(s)$ if $\gamma>p-1$), there exists $C>0$ such that for any $u\in \mathcal{D}^{1,p}_\rho(\mathds{R}^N_+)$:
\begin{equation}\label{1}
\int_{\mathds{R}^N_+} (1+x_N)^k|u|^s \mathrm{d}x\leq C \|u\|^s.
\end{equation}
\end{itemize}
\end{theorem}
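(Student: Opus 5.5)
The plan is to split $u$ into a part near the boundary and a part away from it, control the far part by a weighted Sobolev inequality built from \eqref{hardy} and \eqref{GNS}, control the near part by unweighted estimates on a slab, and get (i) and (ii) by Hölder interpolation. The upper endpoint $s=p^*$ is immediate: since $(1+x_N)^\gamma\ge 1$, we have $\int_{\mathds{R}^N_+}|\nabla u|^p\le \|u\|^p$, so \eqref{GNS} gives $\|u\|_{L^{p^*}}\le C\|u\|$. Everything else reduces to a weighted estimate at the low end, after which we interpolate.

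\textbf{Splitting.} Fix a cut-off $\eta(x_N)$ with $\eta=1$ on $[0,1]$ and $\eta=0$ on $[2,\infty)$. Write $u=\eta u+v$ with $v=(1-\eta)u$. Then $v$ vanishes near $\{x_N=0\}$, so the homogeneous inequality \eqref{hardy} with weight $x_N^\gamma$ applies to $v$. This requires $\gamma\neq p-1$; if $\gamma=p-1$, apply it with a slightly smaller $\gamma'<\gamma$, using $x_N^{\gamma'}\le x_N^\gamma$ for $x_N\ge1$. It gives
\[
\int x_N^{\gamma-p}|v|^p\le C\int x_N^\gamma|\nabla v|^p.
\]
The right-hand side is at most $C\|u\|^p+C\int_{1<x_N<2}|u|^p$.

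\textbf{Controlling the slab term.} Since $x_N^{\gamma-p}\sim1$ on $\{1<x_N<2\}$, the slab term is not circular. For $v$ supported in $x_N>1$, I would prove \eqref{hardy} with the quantity on the strip $\{1<x_N<2\}$ absorbed. I would do this by running the one-dimensional Hardy argument in $x_N$ from the level $x_N=1$ rather than from $0$, and then integrating in $x'$. A one-dimensional estimate in $x_N$ on $(0,2)$ then gives
\[
\int_{0<x_N<2}|u|^p\le C\Big(\int_{1<x_N<2}|u|^p+\int_{x_N<2}|\nabla u|^p\Big).
\]
Together these yield $\int_{\mathds{R}^N_+}(1+x_N)^{\gamma-p}|u|^p\le C\|u\|^p$, at least after shifting the variable $x_N\mapsto 1+x_N$ and applying \eqref{hardy} to $u(x',x_N-1)$-type translates.

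I expect this step, making the Hardy bound genuinely hold without boundary terms, to be the main obstacle. This is exactly where \eqref{Hardy p} fails for $\gamma\le p-1$. If the direct route fails, my fallback is to apply \eqref{HardyDec} to $v$, whose boundary trace vanishes.

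\textbf{Weighted Sobolev estimate.} Next, apply \eqref{GNS} to $w=(1+x_N)^{\gamma/p}u$. We have
\[
|\nabla w|^p\le C(1+x_N)^\gamma|\nabla u|^p+C(1+x_N)^{\gamma-p}|u|^p,
\]
so the previous step gives
\[
\Big(\int(1+x_N)^{\gamma p^*/p}|u|^{p^*}\Big)^{p/p^*}\le C\|u\|^p .
\]

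\textbf{Interpolation.} We now have two weighted bounds:
\[
A:=\int(1+x_N)^{\gamma N/(N-p)}|u|^{p^*}\le C\|u\|^{p^*},\qquad B:=\int(1+x_N)^{\gamma-p}|u|^p\le C\|u\|^p.
\]
For $s=\theta p^*+(1-\theta)p$, Hölder gives
\[
\int(1+x_N)^{k_\theta}|u|^s\le A^\theta B^{1-\theta},\qquad k_\theta=\theta\,\tfrac{\gamma N}{N-p}+(1-\theta)(\gamma-p).
\]
The exponent $k_\theta$ is affine in $s$, which yields an affine $K(s)$ and the bound $\le C\|u\|^s$. The value $s=s_\gamma$ should correspond to $k=0$, which gives (i) at the lower endpoint. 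Values $s\in(s_\gamma,p^*]$ follow by interpolating with $L^{p^*}$, and larger weights are allowed there, which is (ii).

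I would check that the resulting $K(s)$ matches the stated formula. For $\gamma>p-1$ it does so directly, using \eqref{Hardy p} for $B$, and the endpoint $k=K(s)$ is allowed. For $\gamma\le p-1$, the substitute estimate for $B$ may only come with a loss, namely $B$ with exponent $\gamma'-p$ for $\gamma'<\gamma$, or through the trace-type intermediate exponent appearing in the formula. That loss explains the strict inequality $k<K(s)$, and the proof passes to the limit $\gamma'\uparrow\gamma$ in the exponent only. Finally, all bounds extend from $C^\infty_\delta(\mathds{R}^N_+)$ to $\mathcal{D}^{1,p}_\rho(\mathds{R}^N_+)$ by density and Fatou's lemma.
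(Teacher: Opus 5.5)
For $\gamma>p-1$ your argument is the paper's: \eqref{Hardy p} gives $B\le C\|u\|^p$, \eqref{GNS} applied to $(1+x_N)^{\gamma/p}u$ gives the weighted $L^{p^*}$ bound, and H\"older interpolation gives $k=\frac{s}{p}(N-p+\gamma)-N$, which is $\ge 0$ exactly when $s\ge s_\gamma$. The gap is the range $0<\gamma\le p-1$. There your lower endpoint $B=\int_{\mathds{R}^N_+}(1+x_N)^{\gamma-p}|u|^p\,\mathrm{d}x\le C\|u\|^p$ is false, not just hard to prove: even the slab mass $\int_{\mathds{R}^{N-1}\times(0,2)}|u|^p\,\mathrm{d}x$ is not controlled by $\|u\|^p$. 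Take $u=\phi(x'/R)\psi_T(x_N)$ with $\phi\in C^\infty_0(\mathds{R}^{N-1})$, where $\psi_T=1$ on $[0,T]$ and decays linearly to $0$ on $[T,2T]$ (logarithmically on $[T,T^2]$ when $\gamma=p-1$). For $\gamma<p-1$,
\[
\|u\|^p\lesssim R^{N-1-p}T^{\gamma+1}+R^{N-1}T^{\gamma+1-p},\qquad \int_{\mathds{R}^{N-1}\times(0,2)}|u|^p\,\mathrm{d}x\simeq R^{N-1}.
\]
Sending $T\to\infty$ and then $R\to\infty$ drives the ratio to $0$; when $\gamma=p-1$ the bound becomes $R^{N-1-p}T^{2p}+R^{N-1}(\log T)^{1-p}$, with the same conclusion. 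This is the failure of \eqref{hardy} for nonhomogeneous traces recalled in the introduction. Consequences: the slab term produced by $\nabla\bigl((1-\eta)u\bigr)$ can never be absorbed, and your fallback through \eqref{HardyDec} meets the same term. Shifting $x_N$ does not help, since the translate does not vanish at $x_N=1$. Lowering $\gamma$ to $\gamma'$ does not help either, since every weight bounded below near $\{x_N=0\}$ fails the same test. So your interpolation yields neither (i) at $s=s_\gamma$ nor (ii) when $\gamma\le p-1$. A hint that the stated formula has a different origin: interpolating with $B$ would give the exponent $\frac{s}{p}(N-p+\gamma)-N$, which is strictly larger than $K(s)$ when $\gamma<p-1$.

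The missing idea is to avoid $L^p$ altogether by lowering the integrability exponent and raising the power of $u$. Fix $\alpha\in\bigl(1,\frac{p}{p-\gamma}\bigr)$. Then $\gamma\alpha/p>\alpha-1$, so the case $\gamma>p-1$ applies to $|u|^q$ with $(p,\gamma)$ replaced by $(\alpha,\gamma\alpha/p)$. On the other side, H\"older gives
\[
\int_{\mathds{R}^N_+}(1+x_N)^{\gamma\alpha/p}\bigl|\nabla|u|^q\bigr|^\alpha\,\mathrm{d}x\le q^\alpha\|u\|^\alpha\,\|u\|_{L^{\alpha(q-1)(p/\alpha)'}(\mathds{R}^N_+)}^{(q-1)\alpha}.
\]
Take $s=\frac{\alpha N}{N-\alpha+\gamma\alpha/p}$ and $q=\frac{\alpha p}{\alpha p-s(p-\alpha)}$; then $\alpha(q-1)(p/\alpha)'=qs$. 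The two estimates therefore close for $u\in C^\infty_\delta(\mathds{R}^N_+)$ and give $\|u\|_{L^{qs}}\le C\|u\|$, with $qs=\frac{Np}{N-p+\gamma}=s_\gamma$ for every such $\alpha$. Taking instead $s=\alpha^*$ gives $q\alpha^*=p^*$ and $\int_{\mathds{R}^N_+}(1+x_N)^{\gamma\alpha^*/p}|u|^{p^*}\,\mathrm{d}x\le C\|u\|^{p^*}$. Letting $\alpha\uparrow\frac{p}{p-\gamma}$ reaches every exponent below $K(p^*)=\frac{N\gamma}{Np-N\gamma-p}$. Interpolating with the unweighted $L^{s_\gamma}$ bound then produces $K(s)=K(p^*)\frac{s-s_\gamma}{p^*-s_\gamma}$. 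The strict inequality $k<K(s)$ comes from this limit in $\alpha$, not from a loss in $\gamma$.
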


\begin{proof}
In order to provide an upper bound of the best embedding constant (see Remark \ref{est-best const}) we detail the different constants in the inequalities. We first consider the case $\gamma \in (p-1,\infty)$. Let $u\in \mathcal{D}^{1,p}_\rho(\mathds{R}^N_+)$ and $v=(1+x_N)^\frac{\gamma}{p}u$. Applying \eqref{Hardy p}, we obtain:
$$\int_{\mathds{R}^N _+} |\nabla v|^p\mathrm{d}x \leq 2^{p-1} \int_{\mathds{R}^N _+} [(1+x_N)^\gamma|\nabla u|^p +\Big(\frac{\gamma}{p}\Big)^p(1+x_N)^{\gamma-p}|u|^p]\mathrm{d}x\leq 2^{p-1}\Big[1+\Big(\frac{\gamma}{\gamma-p+1}\Big)^p\Big] \|u\|^p.$$
Consequently, by Sobolev embedding, we get:
\begin{equation}\label{ineq1}
\int_{\mathds{R}^N _+} (1+x_N)^{\frac{\gamma}{p}p^*}|u|^{p^*}\mathrm{d}x\leq S(N,p)^\frac{-p^*}{p} 2^{\frac{p-1}{p}p^*}\Big[1+\Big(\frac{\gamma}{\gamma-p+1}\Big)^p\Big]^\frac{p^*}{p}\|u\|^{p^*}.
\end{equation}
where $S(N,p)$ is the constant of \eqref{GNS}.
We set $s\in (p,p^*]$, $0\leq \alpha =p\frac{p^*-s}{p^*-p}< p$ and $k=\gamma s/p-\alpha=s/p(N-p+\gamma)-N\in(\gamma-p,\gamma p^*/p]$. By Hölder's inequality, we have:
\begin{equation*}
\begin{split}
\int_{\mathds{R}^N _+} (1+x_N)^k|u|^s \mathrm{d}x&=
\int_{\mathds{R}^N _+} (1+x_N)^{\alpha\frac{\gamma-p}{p}} |u|^\alpha (1+x_N)^{k-\alpha\frac{\gamma-p}{p}} |u|^{s-\alpha} \mathrm{d}x\\
&\leq \Big( \int_{\mathds{R}^N _+} (1+x_N)^{\gamma-p}|u|^{p} \mathrm{d}x\Big )^\frac{\alpha}{p} \Big (  \int_{\mathds{R}^N _+} (1+x_N)^{\frac{p}{p-\alpha }(k-\alpha\frac{\gamma-p}{p})}|u|^{\frac{s-\alpha}{p-\alpha}p}\mathrm{d}x \Big )^\frac{p-\alpha}{p}.
\end{split}
\end{equation*}
With the choice of $ \alpha$ and $k$ we have 
$${\frac{s-\alpha}{p-\alpha}p}=p^*,\quad {\frac{p}{p-\alpha }(k-\alpha\frac{\gamma-p}{p})}=\gamma p^*/p.$$
Thus,
\begin{equation}\label{embedfinal}
\int_{\mathds{R}^N _+} (1+x_N)^k|u|^s\mathrm{d}x \leq C \|u\|^s
\end{equation}
and we observe that $k\geq 0 \iff s\geq \frac{Np}{N-p+\gamma}=s_\gamma$. Therefore, we get the desired embedding. Moreover, we obtain $C$ as stated in Remark \ref{est-best const} for $s=s_\gamma$. 

Let us next deal with the case $\gamma \leq p-1$. Taking $u\in  C^\infty_\delta(\mathds{R}^N_+)$, $\alpha\in(1,\frac{p}{p-\gamma})$ and $q\in (1,\infty)$, we have $\alpha<p$ and using Hölder's inequality:
\begin{equation}\label{3b}
\int_{\mathds{R}^N_+}(1+x_N)^{\gamma \alpha/p} |\nabla |u|^q|^\alpha\mathrm{d}x \leq q^\alpha \|(1+x_N)^{\gamma/p} \nabla u\|_{L^{p}(\mathds{R}^N_+)}^\alpha\|u \|_{L^{\alpha(q-1)(p/\alpha)'}(\mathds{R}^N_+)}^{(q-1)\alpha}.
\end{equation}
With $\alpha\in(1,\frac{p}{p-\gamma})$ we have $\gamma\frac{\alpha}{p}>\alpha-1$ and using \eqref{embedfinal}, we obtain:
\begin{equation}\label{4b}
 \|u \|_{L^{qs}(\mathds{R}^N_+)}^{q\alpha}=\Big( \int_{\mathds{R}^N_+} |u|^{qs}\mathrm{d}x \Big)^{\frac{\alpha}{s}}\leq C\int_{\mathds{R}^N_+} (1+x_N)^{\gamma \alpha/p}|\nabla |u|^q|^\alpha\mathrm{d}x
\end{equation}
for $s=\frac{\alpha N}{N-\alpha+\frac{\gamma\alpha}{p}}$. We have $\alpha p -s(p-\alpha)>0$. Consequently,  we can take $q=\frac{\alpha p}{\alpha p -s(p-\alpha)}\in(1,\infty)$ and we have $\alpha(q-1)(p/\alpha)'=q s$. Combining \eqref{3b} and \eqref{4b} we have
\begin{equation}\label{embedf2}
\|u \|_{L^{qs}(\mathds{R}^N_+)}^{q \alpha }\leq C \int_{\mathds{R}^N_+} (1+x_N)^{\gamma \alpha/p}|\nabla |u|^q|^\alpha\mathrm{d}x \leq C q^\alpha \|(1+x_N)^{\gamma/p}\nabla u\|_{L^{p}(\mathds{R}^N_+)}^{\alpha}\|u \|_{L^{qs}(\mathds{R}^N_+)}^{(q-1)\alpha}
\end{equation}
from which we get by density $\mathcal{D}^{1,p}_\rho(\mathds{R}^N_+)\hookrightarrow L^{qs}(\mathds{R}^N_+)$ where
$$qs=\frac{N\alpha p}{N\alpha+p(\gamma\alpha/p-\alpha)}=\frac{Np}{N-p+\gamma}=s_\gamma.$$

Finally, let us prove (ii) for $\gamma\leq p-1$. First, by taking $q=\frac{\alpha p}{\alpha p -\alpha^*(p-\alpha)}$ and $s=\alpha^*$ in \eqref{embedfinal}, we get:
$$\int_{\mathds{R}^N_+} (1+x_N)^{\gamma \alpha^*/p}|u|^{q \alpha^*} \mathrm{d}x \leq C\|u\|^{q\alpha^*}.$$
Observing that $q\alpha^*=p^*$ and letting $\alpha\to (\frac{p}{p-\gamma})^-$ we have
$$\int_{\mathds{R}^N_+} (1+x_N)^{K}|u|^{p^*} \mathrm{d}x \leq C\|u\|^{p^*},$$
for all $K<\gamma ( \frac{p}{p-\gamma})^*/p$. We conclude using \eqref{embedfinal} and Hölder's inequality that:

$$\int_{\mathds{R}^N_+} (1+x_N)^k |u|^s \mathrm{d}x \leq C\|u\|^s,$$
for all $k<K(s)=\frac{\gamma (\frac{p}{p-\gamma})^*}{p}\frac{s-s_\gamma}{p^*-s_\gamma}=\frac{(N-p)(s(N-p+\gamma)-Np)}{p(Np-N\gamma-p)}$ which ends the proof.

\end{proof}

\begin{proposition}
\label{Dinj}
Let $\gamma>0$, $p\leq N$, $\alpha\in(1,p)$, $s_\alpha=\frac{\alpha N}{N-\alpha+\gamma \alpha/p}$, $s\in [s_\alpha,\alpha^*)$ ($s\in [s_\alpha, \alpha^*]$ if $p<N$) and $u\in \mathcal{D}^{1,p}_\rho(\mathds{R}^N_+)$. Then, we have the following inequality
$$\| |u|^q \|_{\mathcal{D}^{1,\alpha}_{\tilde{\rho}}(\mathds{R}^N_+)}^\frac{1}{q}\leq C \|u\| $$
where $\tilde \rho(x_N)=(1+x_N)^\frac{\gamma\alpha}{p}$ and  $q=\frac{\alpha p}{\alpha p -s(p-\alpha)}$. In the case $p=N$, letting $\alpha\to N$ we get $\alpha^*\to \infty $ and letting $s\to \alpha^*$ we get $q\to \infty$.
\end{proposition}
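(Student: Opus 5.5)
The plan is to run the same two-step argument used in the proof of Theorem \ref{inj2} for $\gamma\le p-1$: a chain-rule/H\"older estimate for $|u|^q$, then a Sobolev embedding in the auxiliary space $\mathcal{D}^{1,\alpha}_{\tilde\rho}(\mathds{R}^N_+)$. Since $\alpha<p$, we have $p/\alpha>1$, and H\"older with exponents $p/\alpha$ and $(p/\alpha)'$ gives, for $u\in C^\infty_\delta(\mathds{R}^N_+)$,
\begin{equation*}
\||u|^q\|_{\mathcal{D}^{1,\alpha}_{\tilde\rho}}^\alpha=\int_{\mathds{R}^N_+}(1+x_N)^{\gamma\alpha/p}|\nabla|u|^q|^\alpha\,\mathrm{d}x\le q^\alpha\|u\|^\alpha\,\|u\|_{L^{\alpha(q-1)(p/\alpha)'}(\mathds{R}^N_+)}^{(q-1)\alpha}.
\end{equation*}
The choice $q=\frac{\alpha p}{\alpha p-s(p-\alpha)}$ is exactly the one making $\alpha(q-1)(p/\alpha)'=qs$. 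Positivity of the denominator, i.e. $s<\frac{\alpha p}{p-\alpha}$, has to be checked. I expect this to follow from $s\le\alpha^*$ together with $\alpha^*=\frac{N\alpha}{N-\alpha}<\frac{\alpha p}{p-\alpha}$, which holds because $p<N$ (and trivially when $p=N$, since then $\alpha^*<\infty=\frac{\alpha p}{p-\alpha}$ fails only formally; one checks directly that $s<\alpha^*\le\frac{\alpha p}{p-\alpha}$ when $p=N$).

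Next I bound $\|u\|_{L^{qs}}^{q}=\||u|^q\|_{L^s}$ by $\||u|^q\|_{\mathcal{D}^{1,\alpha}_{\tilde\rho}}$. The weight $\tilde\rho$ has exponent $\tilde\gamma=\gamma\alpha/p>0$. Theorem \ref{inj2}(i), applied with $p$ replaced by $\alpha<N$ and $\gamma$ by $\tilde\gamma$, gives $\mathcal{D}^{1,\alpha}_{\tilde\rho}\hookrightarrow L^s$ for all $s\in[s_\alpha,\alpha^*]$, where $s_\alpha=\frac{\alpha N}{N-\alpha+\tilde\gamma}$ is precisely the corresponding $s_{\tilde\gamma}$. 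If $\tilde\gamma\ge\alpha$, the relevant threshold is $\alpha$ rather than $s_\alpha$; but Proposition \ref{embedhardy} covers $[\alpha,\alpha^*]\supset[s_\alpha,\alpha^*]$ in that regime anyway. The point to verify is that $|u|^q$ genuinely lies in $\mathcal{D}^{1,\alpha}_{\tilde\rho}$: it is compactly supported in $\overline{\mathds{R}^N_+}$ and $C^1$ because $q>1$, so it can be approximated by mollification in $C^\infty_\delta$. This yields
\begin{equation*}
\|u\|_{L^{qs}}^{q\alpha}\le C\,\||u|^q\|_{\mathcal{D}^{1,\alpha}_{\tilde\rho}}^\alpha\le Cq^\alpha\|u\|^\alpha\|u\|_{L^{qs}}^{(q-1)\alpha}.
\end{equation*}
Dividing gives $\|u\|_{L^{qs}}\le C\|u\|$. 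This division is legitimate because $\|u\|_{L^{qs}}<\infty$ for test functions.

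Substituting this back into the first estimate gives
\begin{equation*}
\||u|^q\|^\alpha_{\mathcal{D}^{1,\alpha}_{\tilde\rho}}\le C\|u\|^{\alpha}\|u\|^{(q-1)\alpha}=C\|u\|^{q\alpha},
\end{equation*}
which is the claim after taking the power $1/(q\alpha)$. I then extend to all $u\in\mathcal{D}^{1,p}_\rho$ by density: for $u_n\to u$, the map $u\mapsto|u|^q$ is continuous in the relevant norms thanks to the bounds just proved, or alternatively one can use Fatou and lower semicontinuity of the norm.

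The main obstacle is the case $p=N$. There Theorem \ref{inj2} is stated only for $p<N$, but $\alpha<p=N$, so it still applies to the auxiliary space; this is why the endpoint $\alpha^*$ is allowed only when $p<N$, and why for $p=N$ I restrict to $s<\alpha^*$ to keep $q$ finite. The final remark is a direct computation: as $\alpha\to N$ we get $\alpha^*\to\infty$, and as $s\to\alpha^*$ the denominator $\alpha p-s(p-\alpha)$ tends to $\alpha N-\alpha^*(N-\alpha)=0$, so $q\to\infty$.
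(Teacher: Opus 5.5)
Your argument is the same as the paper's: the H\"older/chain-rule bound for $\nabla|u|^q$, the choice of $q$ that makes $\alpha(q-1)(p/\alpha)'=qs$, Theorem~\ref{inj2} applied to $|u|^q$ for the pair $(\alpha,\gamma\alpha/p)$, absorption of $\|u\|_{L^{qs}}^{(q-1)\alpha}$, and resubstitution. The one wrong step is your patch for $\tilde\gamma=\gamma\alpha/p\ge\alpha$, i.e.\ $\gamma\ge p$. There $N-\alpha+\tilde\gamma\ge N$, so $s_\alpha\le\alpha$, and the inclusion runs the other way: $[s_\alpha,\alpha^*]\supseteq[\alpha,\alpha^*]$. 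For $\gamma>p$ the interval $[s_\alpha,\alpha)$ is nonempty. Proposition~\ref{embedhardy} does not cover it, and neither does Theorem~\ref{inj2}, whose lower threshold for the pair $(\alpha,\tilde\gamma)$ is then $\alpha$, not $s_\alpha$.

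This cannot be patched, because the inequality itself fails there. If $s<\alpha$ then $q\alpha<p$ (indeed $q\alpha<p$ if and only if $s<\alpha$). Take $u_R(x',x_N)=\phi(x'/R)\psi(x_N)$ with $\phi\in C^\infty_0(\mathds{R}^{N-1})$ and $\psi\in C^\infty_0(\mathds{R})$ nontrivial on $(0,\infty)$, and $R\ge 1$. Then $\|u_R\|\le CR^{(N-1)/p}$. Keeping only the $x_N$-derivative gives $\||u_R|^q\|_{\mathcal{D}^{1,\alpha}_{\tilde\rho}(\mathds{R}^N_+)}^{1/q}\ge cR^{(N-1)/(q\alpha)}$, so the ratio blows up as $R\to\infty$. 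The same dilation also shows $\mathcal{D}^{1,\alpha}_{\tilde\rho}(\mathds{R}^N_+)\not\hookrightarrow L^s(\mathds{R}^N_+)$ for $s<\alpha$.

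So your proof, like the paper's (which applies Theorem~\ref{inj2} with threshold $s_\alpha$ without comment), is valid exactly for $s\ge\max\{s_\alpha,\alpha\}$. In particular it covers the whole stated range when $\gamma\le p$. That is the regime where the paper actually needs the proposition: Theorem~\ref{improvetheo2.2} assumes $\gamma\le p-1$, and in Proposition~\ref{inj N=p} the case $\gamma\ge p$ is handled through \eqref{Hardy p} instead.

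A minor point: for $p=N$ one has $\frac{\alpha p}{p-\alpha}=\alpha^*$ exactly, not $\infty$. Positivity of $\alpha p-s(p-\alpha)$ is therefore precisely why $s=\alpha^*$ must be excluded in that case. Your final check says this correctly, but the sentence before it is muddled.
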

\begin{proof}
Let $u\in C^\infty_\delta(\mathds{R}^N_+)$, $1<\alpha<p\leq N$ and $q\in (1,\infty)$.  
Similarly to \eqref{3b}, we have that:
\begin{equation}\label{3}
\int_{\mathds{R}^N_+} (1+x_N)^{\gamma \alpha/p}|\nabla |u|^q|^\alpha \mathrm{d}x \leq C \|(1+x_N)^{\gamma/p}\nabla u\|_{L^{p}(\mathds{R}^N_+)}^\alpha\|u \|_{L^{\alpha(q-1)(p/\alpha)'}(\mathds{R}^N_+)}^{(q-1)\alpha}.
\end{equation} 
Using Theorem \ref{inj2} with $\alpha<N$ we have:
\begin{equation}\label{4}
\int_{\mathds{R}^N_+} (1+x_N)^{\gamma \alpha/p} |\nabla |u|^q|^\alpha\mathrm{d}x\geq c\Big( \int_{\mathds{R}^N_+} |u|^{qs}\mathrm{d}x \Big)^{\frac{\alpha}{s}}=c\|u \|_{L^{qs}(\mathds{R}^N_+)}^{q\alpha}
\end{equation}
for all $s\in [s_\alpha, \alpha^*)$ ($s\in [s_\alpha, \alpha^*]$ if $p<N$). Taking such a $s$ and with $p\leq N$ we have $\alpha p -s(p-\alpha)>0$. Thus, letting $q=\frac{\alpha p}{\alpha p -s(p-\alpha)}\in(1,\infty)$, we have $\alpha(q-1)(p/\alpha)'=q s$. Combining \eqref{3} and \eqref{4} we get:
$$\|u \|_{L^{qs}(\mathds{R}^N_+)}^{\alpha}\leq C   \|(1+x_N)^{\gamma/p} \nabla u\|_{L^{p}(\mathds{R}^N_+)}^{\alpha}.$$
With \eqref{3}, this yields:
$$ \int_{\mathds{R}^N_+} (1+x_N)^{\gamma \alpha/p}|\nabla |u|^q|^\alpha\mathrm{d}x \leq C \|(1+x_N)^{\gamma/p} \nabla u\|_{L^{p}(\mathds{R}^N_+)}^{q\alpha}.$$
\end{proof}
We now establish new trace embeddings when $\gamma\leq p-1$ which complements Proposition \ref{embedhardy} and Theorem \ref{inj2}:
\begin{theorem}\label{improvetheo2.2}
Let $p<N$, $0<\gamma\leq p-1$. Then $$\mathcal{D}^{1,p}_\rho(\mathds{R}^N_+)\hookrightarrow  L^r(\mathds{R}^{N-1})$$ 
for all $r\in (q_\gamma, p_*]$.
\end{theorem}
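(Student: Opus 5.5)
The plan is to run the same bootstrap used in the proof of Theorem \ref{inj2} for $\gamma\le p-1$, this time on the boundary. The weight $(1+x_N)^\gamma$ is too weak for \eqref{Hardy p} at exponent $p$. We therefore pass to an auxiliary exponent $\alpha<p$, apply the map $u\mapsto|u|^q$, and use \eqref{Hardy p} (through Proposition \ref{embedhardy}) at exponent $\alpha$ with the weight $\tilde\rho(x_N)=(1+x_N)^{\gamma\alpha/p}$. This works because for $\alpha\in(1,\frac{p}{p-\gamma})$ we have $\tilde\gamma:=\gamma\alpha/p>\alpha-1$, which is exactly the range where Proposition \ref{embedhardy} gives a trace embedding.

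The key steps, in order:

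\emph{Step 1 (upper endpoint).} Since $\rho\ge C_0$ by $(\rho_0)$, we have $\mathcal{D}^{1,p}_\rho(\mathds{R}^N_+)\hookrightarrow \dot W^{1,p}(\mathds{R}^N_+)$. Hence \eqref{Trace-inequality} gives the embedding into $L^{p_*}(\mathds{R}^{N-1})$ directly.

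\emph{Step 2 (exponents near $q_\gamma$).} Fix $\alpha\in(1,\frac{p}{p-\gamma})$ and $s=s_\alpha=\frac{\alpha N}{N-\alpha+\gamma\alpha/p}$. Set $q=\frac{\alpha p}{\alpha p-s(p-\alpha)}$. For $u\in C^\infty_\delta(\mathds{R}^N_+)$, Proposition \ref{Dinj} yields
\[
\||u|^q\|_{\mathcal{D}^{1,\alpha}_{\tilde\rho}(\mathds{R}^N_+)}\le C\|u\|^q .
\]
Because $\tilde\gamma>\alpha-1$ and $\alpha<N$, Proposition \ref{embedhardy}, applied with $p$ replaced by $\alpha$ and $\gamma$ by $\tilde\gamma$, gives
\[
\||u|^q\|_{L^t(\mathds{R}^{N-1})}\le C\||u|^q\|_{\mathcal{D}^{1,\alpha}_{\tilde\rho}}
\]
for every $t\in[\alpha,\alpha_*]$. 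Combining the two bounds gives $\|u\|_{L^{qt}(\mathds{R}^{N-1})}\le C\|u\|$, and density extends this to all of $\mathcal{D}^{1,p}_\rho(\mathds{R}^N_+)$. The smallest exponent reached for this $\alpha$ is $r(\alpha):=q\alpha$, obtained with $t=\alpha$. If needed, we can also let $s$ range over $[s_\alpha,\alpha^*]$ in Proposition \ref{Dinj} to widen the set of attainable exponents.

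\emph{Step 3 (computing the endpoint).} I would compute $r(\alpha)$ explicitly and check two facts. First, $r(\alpha)$ is monotone in $\alpha$. Second, $r(\alpha)\to q_\gamma=\frac{p(N-1)}{N-p+\gamma}$ as $\alpha\to(\frac{p}{p-\gamma})^-$, which is exactly the borderline $\tilde\gamma=\alpha-1$. At that borderline $s_\alpha=\frac{\alpha N}{N-1}$, which simplifies the algebra. Since the constant $C_{\alpha,\tilde\gamma}=\frac{\tilde\gamma-\alpha+1}{\alpha}$ in \eqref{Hardy p} degenerates in this limit, the value $q_\gamma$ itself is not reached. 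This explains why the interval in the statement is open at $q_\gamma$.

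\emph{Step 4 (interpolation).} Given $r\in(q_\gamma,p_*]$, choose $\alpha$ close enough to $\frac{p}{p-\gamma}$ that $r(\alpha)<r$. Hölder interpolation on $\mathds{R}^{N-1}$ between $L^{r(\alpha)}$ and $L^{p_*}$ then bounds $\|u\|_{L^r(\mathds{R}^{N-1})}$ by $C\|u\|$.

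I expect the main obstacle to be Step 3. The algebra for $q\alpha$ in terms of $\alpha$, $\gamma$, $N$, $p$ must actually produce $q_\gamma$ in the limit. If it does not with $s=s_\alpha$, the fallback is to optimize jointly over $s\in[s_\alpha,\alpha^*]$ and $t\in[\alpha,\alpha_*]$, i.e.\ minimize $qt$. One must also check that $\alpha p-s(p-\alpha)>0$ holds throughout, so that $q$ stays finite and larger than $1$.
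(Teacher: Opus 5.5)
Your proposal is correct and is essentially the paper's own proof. You use Proposition \ref{Dinj} with $s=s_\alpha$, then \eqref{Hardy p} at exponent $\alpha$ with weight $(1+x_N)^{\gamma\alpha/p}$ (valid since $\gamma\alpha/p>\alpha-1$) to obtain the trace exponent $q\alpha$, and finally take the infimum over $\alpha\in(1,\frac{p}{p-\gamma})$. The only cosmetic difference is the endpoint $p_*$: you get it from \eqref{Trace-inequality} plus Hölder interpolation, whereas the paper takes $s=\alpha^*$ and uses $q\alpha_*=p_*$.

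Your worry about Step 3 is unfounded. One computes $\alpha p-s_\alpha(p-\alpha)=\frac{\alpha^2(N-p+\gamma)}{N-\alpha+\gamma\alpha/p}>0$ and $q\alpha=\frac{pN-(p-\gamma)\alpha}{N-p+\gamma}$. This decreases in $\alpha$ to $q_\gamma$ as $\alpha\to(\frac{p}{p-\gamma})^-$, and $q>1$ throughout because $\gamma\le p-1$.
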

\begin{proof}
Setting $1<\alpha<\frac{p}{p-\gamma}\leq p$ and $q=\frac{\alpha p}{\alpha p -s(p-\alpha)}$ with $s\in [s_\alpha,\alpha^*]$, we have $\gamma \alpha/p>\alpha-1$ and using \eqref{Hardy p}, Proposition \ref{Dinj} we get:
$$ \mathcal{D}^{1,p}_\rho(\mathds{R}^N_+)\hookrightarrow  L^r(\mathds{R}^{N-1})$$
for all $\displaystyle{ r\in \Big[ \frac{\alpha^2 p}{\alpha p -s_{\alpha}(p-\alpha)},p_*\Big] }$, noting that $q\alpha_*=p_*$ when $s=\alpha^*$. Thus, the above embedding holds
for all 
$$r\in (\inf_{\alpha\in(1,p/(p-\gamma))} \frac{\alpha^2 p}{\alpha p -s_{\alpha}(p-\alpha)}, p_*].$$
Then, recalling that $0<\gamma\leq p-1$, we easily show that
$$\inf_{\alpha\in(1,p/(p-\gamma))} \frac{\alpha^2 p}{\alpha p -s_{\alpha}(p-\alpha)}=\inf_{\alpha\in(1,p/(p-\gamma))} \frac{pN+p\alpha(\gamma/p-1)}{N-p+\gamma}
=\frac{p(N-1)}{N-p+\gamma}=q_\gamma.$$
\end{proof}

\begin{remark}\label{est-best const}
Let $p>\gamma>p-1$, taking into account steps of the proof of Theorem \ref{inj2} we get the following upper bound for the optimal constant of the Sobolev embedding:
$$\|u\|_{L^{s_\gamma}(\mathds R ^N_+)}\leq C(N,p,\gamma) \|u\|, \text{  for all }u\in \mathcal{D}_\rho^{1,p}(\mathds R ^N_+),$$
where 

\begin{align*}
C(N,p,\gamma)=&\Big(\frac{p}{\gamma-p+1}\Big)^{\frac{p(p^*-s_\gamma)}{s_\gamma(p^*-p)}}S(N,p)^\frac{-p^*(s_\gamma-p)}{ps_\gamma(p^*-p)}2^{\frac{p^\ast(p-1)(s_\gamma-p)}{s_\gamma p(p^*-p)}}\Big(1+\frac{\gamma^p}{(\gamma-p+1)^p}\Big)^\frac{p^*(s_\gamma-p)}{ps_\gamma(p^*-p)}\\
=& \Big(\frac{p}{\gamma-p+1}\Big)^{\gamma/p}S(N,p)^{-\frac{1}{p}(1-\gamma/p)}\Big[2^{p-1}\Big(1+\frac{\gamma^p}{(\gamma-p+1)^p}\Big)\Big]^{\frac{1}{p}(1-\gamma/p)}
\end{align*}

and where $S(N,p)$ is the 
constant in \eqref{GNS}. Note that when $\gamma\to p$ we recover $C(N,p,\gamma)\to C_{p,\gamma}^{-1} $ given in \eqref{Hardy p}. For the case $\gamma\leq p-1$ we get for all $\alpha\in(1,\frac{p}{p-\gamma})$:
$$\|u\|_{L^{s_\gamma}(\mathds R ^N_+)}\leq C(N,p,\gamma,\alpha) \|u\|, \text{  for all }u\in \mathcal{D}_\rho^{1,p}(\mathds R ^N_+),$$
where
$$C(N,p,\gamma,\alpha)= \frac{  Np-\alpha p+\gamma \alpha }{\alpha (N-p+\gamma)} C(N,\alpha, \gamma \alpha /p).$$
Note that relative constants for embeddings in $L^r(\mathds R ^N_+)$ with $r\in (s_\gamma, p^*)$ can be obtained by interpolation with embeddings corresponding to $r=s_\gamma$ and $r=p^*$. Concerning the trace embedding,  the exponent $q_\gamma$ is not attained in the sense that
$$\|u\|_{L^{q}(\mathds R ^{N-1})}\leq C(q) \|u\|, \text{  for all }u\in \mathcal{D}_\rho^{1,p}(\mathds R ^N_+),$$
with 
$$C(q)=O(1)(q(N-p+\gamma)-pN+p)^{(pN+p-\gamma)/q-(N-p+\gamma)/(\gamma-p)}\to \infty$$
as $q\to q_\gamma^+$.
\end{remark}

From Sobolev and trace embeddings satisfied by $\mathcal{D}^{1,p}_\rho(\mathds{R}^N_+)$, we next prove some continuity result with respect to the $x_N$ variable that we will use to show compact trace embeddings.

\begin{lemma}\label{LemCont}
Let $p<N$, $\gamma>0$, $q\in (q_\gamma, p_*)$ then we have the continuous embedding:
$$\mathcal{D}^{1,p}_\rho (\mathds{R}^N_+)\hookrightarrow C([0,\infty),L^q(\mathds{R}^{N-1})).$$
More precisely, for some $\beta>0$, the trace function restricted to $\mathcal{D}^{1,p}_\rho(\mathds{R}^N_+)$ is locally $\beta$-Lipschitz in $L^q$ with respect to $x_N$ variable.
\end{lemma}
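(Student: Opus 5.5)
We prove the estimate for $u\in C^\infty_\delta(\mathds{R}^N_+)$ and then pass to $\mathcal{D}^{1,p}_\rho(\mathds{R}^N_+)$ by density. Write $u_t:=u(\cdot,t)$. We will show two bounds with a constant $C_T$ uniform in $0\le t\le t'\le T$:
\[
\|u_t\|_{L^q(\mathds{R}^{N-1})}\le C_T\|u\|,\qquad \|u_t-u_{t'}\|_{L^q(\mathds{R}^{N-1})}\le C_T|t-t'|^{\beta}\|u\|.
\]
Given these, Cauchy sequences in $\mathcal{D}^{1,p}_\rho$ are Cauchy in $C([0,T],L^q)$ for every $T$. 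The limit map is the continuous extension of $u\mapsto (t\mapsto u_t)$, and it is locally $\beta$-Hölder ("$\beta$-Lipschitz") in $t$.

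\emph{Uniform bound.} Apply Theorem \ref{improvetheo2.2} (or Proposition \ref{embedhardy} when $\gamma>p-1$) to the shifted function $v(x',x_N)=u(x',x_N+t)$. We have $\int(1+x_N)^\gamma|\nabla v|^p=\int_{x_N>t}(1+x_N-t)^\gamma|\nabla u|^p\le\|u\|^p$, since $(1+x_N-t)^\gamma\le(1+x_N)^\gamma$ for $\gamma>0$. Hence $\|u_t\|_{L^q}\le C\|u\|$ uniformly in $t\ge0$. This holds for every $q\in(q_\gamma,p_*]$.

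\emph{Hölder estimate.} We interpolate. First, $|u_t-u_{t'}|\le\int_t^{t'}|\partial_N u(x',\tau)|\,d\tau$, so by Hölder in $\tau$,
\[
\|u_t-u_{t'}\|_{L^p(\mathds{R}^{N-1})}\le |t-t'|^{1-1/p}\Big(\int_{\mathds{R}^{N-1}\times(t,t')}|\nabla u|^p\Big)^{1/p}\le |t-t'|^{1-1/p}\|u\|,
\]
using $\rho\ge1$. This $L^p$ estimate needs no integrability of $u_t$ itself, because only differences appear. Next, fix $q\in(q_\gamma,p_*)$. If $q\ge p$, choose $r\in(q,p_*]$ and interpolate between $L^p$ and $L^r$. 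The $L^r$ norm of $u_t-u_{t'}$ is bounded by $2C\|u\|$ by the uniform bound. With $1/q=\theta/p+(1-\theta)/r$ and $\theta>0$ this gives $\beta=\theta(1-1/p)$.

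\emph{Main obstacle: $q<p$.} This case can occur when $\gamma\le p-1$, since then $q_\gamma<p$. Interpolating between $L^p$ and $L^r$ with $r\ge p$ cannot reach $L^q$, and the other endpoint would have to be some $L^{\tilde q}$ with $\tilde q\in(q_\gamma,q)$. Two fixes are available.

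The first is to pick $\tilde q\in(q_\gamma,q)$ and interpolate between $L^{\tilde q}$, where the difference is bounded by $2C\|u\|$, and $L^p$, where it is $O(|t-t'|^{1-1/p})$. Since $\tilde q<q<p$, $q$ lies strictly between the two exponents, so the interpolation exponent on the $L^p$ factor is positive.

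The second, if a direct estimate is preferred, is to use the Proposition \ref{Dinj} device. Apply the $L^p$ difference estimate to $|u|^{m}$ with a suitable $\alpha$ and weight $\tilde\rho$, then transfer it to $u$ via $\big||a|^m-|b|^m\big|\gtrsim|a-b|^m$.

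The first route suffices and is the one we adopt. The delicate point is only to check that the interpolation parameter is strictly positive for every $q\in(q_\gamma,p_*)$. This is exactly where the open endpoints of the range of $q$ are used.
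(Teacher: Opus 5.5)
Your argument is correct, and it takes a genuinely different, and in one respect stronger, route than the paper's.

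\textbf{The paper's route.} The paper differentiates the scalar function $t\mapsto\|u(\cdot,t)\|_{L^q(\mathds{R}^{N-1})}^q$ and bounds the derivative by $\|\nabla u\|_{L^p}\|u\|_{L^{p'(q-1)}}^{q-1}$. It then applies H\"older in $x_N$ with an auxiliary exponent $\alpha\in(1,p)$. Next it interpolates $L^{p'(q-1)}$ between $L^q$ (the trace bound at each height) and $L^{p^*}$. Finally it tunes $\alpha$ so that the $L^{p^*}$ factor, integrated in $x_N$, is controlled by the bulk Sobolev inequality. The output is a H\"older bound on $\bigl|\|u(\cdot,t+h)\|_q^q-\|u(\cdot,t)\|_q^q\bigr|$, i.e.\ continuity of the norm only. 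That is what the proof of Theorem \ref{compa} then combines with uniform convexity.

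\textbf{Your route.} You estimate the difference $u(\cdot,t)-u(\cdot,t')$ itself, in three steps:
\begin{enumerate}
\item The fundamental theorem of calculus gives an $L^p$ bound of order $|t-t'|^{1-1/p}$.
\item The translation argument gives a $t$-uniform $L^r$ bound for $r\in(q,p_*]$. The paper uses this translation only implicitly, when it invokes Theorem \ref{improvetheo2.2} at height $x_N$.
\item Interpolation between the two closes the estimate.
\end{enumerate}
This avoids the bulk $L^{p^*}$ estimate and the tuning of $\alpha$. It gives an explicit exponent $\beta=\theta(1-1/p)$, uniform in $t,t'\ge 0$. It also proves genuine continuity of $t\mapsto u(\cdot,t)$ in $L^q$, which is what the embedding into $C([0,\infty),L^q(\mathds{R}^{N-1}))$ asserts.

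\textbf{One correction: your ``main obstacle'' does not exist.} For $0<\gamma\le p-1$ one has $N-p+\gamma\le N-1$, hence $q_\gamma=\frac{p(N-1)}{N-p+\gamma}\ge p$, with equality only at $\gamma=p-1$. For $\gamma>p-1$, $q_\gamma=p$ by definition. So every $q\in(q_\gamma,p_*)$ satisfies $q>p$, and your first case already covers the whole range. This is also why the paper can simply write $p<q<p_*$.

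Your fallback interpolation for $q<p$ would be valid, but it is not needed, and the claim that $q_\gamma<p$ should be removed. Accordingly, positivity of $\theta$ uses only the open upper endpoint $q<p_*$. The lower endpoint $q>q_\gamma$ is used only to ensure $u(\cdot,t)\in L^q$ with a bound uniform in $t$.
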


\begin{proof}
Let $u\in C^\infty_\delta(\mathds{R}^N_+)$. Let $q\in (q_\gamma, p_*)$ and $t\in (0,+\infty)$ we have:

\begin{equation}\label{estss1}
\begin{split}
\|u( \cdot,t)\|_{L^q(\mathds{R}^{N-1})}^q-\|u( \cdot,0)\|_{L^q(\mathds{R}^{N-1})}^q&=\int_0^t \frac{d}{d_{x_N}}\|u\|_{L^q(\mathds{R}^{N-1})}^q \mathrm{d}x_N \\
&\leq C \int^t_0 \|\nabla u \|_{L^p(\mathds{R}^{N-1})}\|u \|_{L^{p'(q-1)}(\mathds{R}^{N-1})} ^{q-1}\mathrm{d}x_N
\\
&\leq C \Big(\int_0^t \|\nabla u \|_{L^p(\mathds{R}^{N-1})}^\alpha\mathrm{d}x_N\Big)^\frac{1}{\alpha}\Big(\int_0^t\|u \|_{L^{p'(q-1)}(\mathds{R}^{N-1})} ^{\alpha'(q-1)}\mathrm{d}x_N\Big)^\frac{1}{\alpha'}
\end{split}
\end{equation}
for some $\alpha \in(1,p)$. We deduce by H\"older inequality that for $\beta=\displaystyle\frac{1}{\alpha}\left(\frac{p}{\alpha}\right)'>0$, one has:
\begin{equation}\label{estss2}
\Big(\int_0^t \|\nabla u \|_{L^p(\mathds{R}^{N-1})}^\alpha\mathrm{d}x_N\Big)^\frac{1}{\alpha}\leq C t^\beta \|u\|. 
\end{equation}

Using $p<q<p_*$ we get easily $q<p'(q-1)<p^*$. Therefore using interpolation inequality, there exists $\delta\in(0,1)$ independent of $\alpha$ such that
\begin{equation}\label{estss3}
\int_0^t\|u \|_{L^{p'(q-1)}(\mathds{R}^{N-1})} ^{\alpha'(q-1)}\mathrm{d}x_N\leq \int_0^t \|u \|_{L^{p^*}(\mathds{R}^{N-1})} ^{\delta \alpha'(q-1)} \|u \|_{L^{q}(\mathds{R}^{N-1})} ^{\alpha'(q-1)(1-\delta)}\mathrm{d}x_N.
\end{equation}
Since $\alpha\in (1,p)$ and $\delta p'(q-1)=\displaystyle\frac{\delta}{\frac{\delta}{p^*}+\frac{(1-\delta)}{q}}<p^*$, we can take $\alpha$ such that $\delta \alpha'(q-1)=p^*$. Thus, for such $\alpha$, by Theorem \ref{improvetheo2.2}, we infer:
\begin{equation}\label{estss4}
\begin{split}
\int_0^t \|u \|_{L^{p^*}(\mathds{R}^{N-1})} ^{\delta \alpha'(q-1)} \|u \|_{L^{q}(\mathds{R}^{N-1})} ^{\alpha'(q-1)(1-\delta)}\mathrm{d}x_N&=\int_0^t \|u \|_{L^{p^*}(\mathds{R}^{N-1})} ^{p^*} \|u \|_{L^{q}(\mathds{R}^{N-1})} ^{\alpha'(q-1)(1-\delta)}\mathrm{d}x_N\\ 
&\leq C\|u\|^{\alpha'(q-1)(1-\delta)}\int_0^t\|u\|_{L^{p^*}(\mathds{R}^{N-1})}^{p^*}\mathrm{d}x_N\\
&\leq C\|u\|^{\alpha'(q-1)(1-\delta)+p^*}.
\end{split}
\end{equation}
Then, gathering \eqref{estss1}-\eqref{estss4}, we obtain for $x_N>0$:
\begin{equation}\label{cont}
|\|u( \cdot,x_N+h)\|_{L^q(\mathds{R}^{N-1})}^q-\|u(\cdot,x_N)\|_{L^q(\mathds{R}^{N-1})}^q|\leq C h^\beta \|u\|^q.
\end{equation}

\end{proof}

Next, we are interested to show compactness properties in case of cylindrical symmetry. In this aim, we denote $f^*$ a decreasing rearrangement of $f$, it is well known that for any $s\geq 1$: 

$$\|f\|_{L^s} =\|f^* \|_{L^s} , \text{ and } \|f_1-f_2\|_{L^s}  \geq \|f^*_1-f_2^* \|_{L^s},$$

that correspond to the equimeasurability property and the Hardy Littlewood inequality (see \cite{HardyLittlewood} or \cite{Kawohl} for further details), respectively.

\begin{theorem}\label{compa}
Let $p<N$, $\gamma>0$, $q\in(q_\gamma,p_* )$, $s\in (s_\gamma,p^*)$, then the continuous embeddings
$$\mathcal{R}\hookrightarrow L^q(\mathds{R}^{N-1}), \; \mathcal{R}\hookrightarrow L^s(\mathds{R}^{N}_+).$$
are compacts.
\end{theorem}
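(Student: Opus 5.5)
Take a bounded sequence $(u_n)\subset\mathcal{R}$, $\|u_n\|\le M$. Up to a subsequence $u_n\rightharpoonup u$ weakly in $\mathcal{D}^{1,p}_\rho(\mathds{R}^N_+)$ and, by Rellich on bounded sets, $u_n\to u$ in $L^r_{\mathrm{loc}}(\overline{\mathds{R}^N_+})$ and in $L^r_{\mathrm{loc}}(\mathds{R}^{N-1})$ for $r<p^*$, resp. $r<p_*$. Here we use that $\rho\ge C_0>0$ and that the trace is compact on bounded pieces of $\mathds{R}^{N-1}$. It then suffices to prove uniform smallness of the tails:
\[
\sup_n\int_{\{|x'|>R\}\cup\{x_N>R\}}|u_n|^s\,\mathrm{d}x\to0,\qquad \sup_n\int_{\{|x'|>R\}}|u_n(x',0)|^q\,\mathrm{d}x'\to0 \quad (R\to\infty).
\]
We split the tail into two regions.

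The first region is $x_N>R$. Fix $s\in(s_\gamma,p^*)$. Theorem \ref{inj2}(ii) provides some $k$ with $0<k<K(s)$, because $K(s)>0$ precisely when $s>s_\gamma$. Then
\[
\int_{x_N>R}|u_n|^s\le (1+R)^{-k}\int(1+x_N)^k|u_n|^s\le C(1+R)^{-k}M^s .
\]

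The second region is $|x'|>R$ with $x_N\le R$; this is where the symmetry enters. For a.e. $x_N$, the slice $u_n(\cdot,x_N)$ is radial in $x'$. Rather than a classical Strauss pointwise decay, I would use the following radial estimate. Since a radial function is nonincreasing in modulus after symmetrization, or more simply by the radial lemma applied to the slices,
\[
|v(x')|^r\le \frac{C}{|x'|^{N-1}}\|v\|_{L^r(\mathds{R}^{N-1})}^r
\]
holds for radially nonincreasing $v$. For general radial $v$ I would apply the decreasing rearrangement $f^*$ just introduced. The Hardy--Littlewood inequality $\|f_1-f_2\|\ge\|f_1^*-f_2^*\|$ lets us reduce the tail estimate to decreasing profiles without increasing $L^s$ distances. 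Concretely, I would combine the bound $|x'|^{N-1}|v^*(x')|^r\le C\|v\|_r^r$ with interpolation between $q$ and a larger exponent. This gives $\int_{|x'|>R}|v|^q\le CR^{-\epsilon}\|v\|^{\theta}_{L^{q_1}}\|v\|^{1-\theta}_{L^{q_2}}$ with $q_\gamma<q_1<q<q_2<p_*$ (and analogously for $s$ in dimension $N$). For the trace this directly uses Theorem \ref{improvetheo2.2}. For the interior piece $\{|x'|>R,\ x_N<R\}$, I would instead integrate the slice estimate in $x_N$, using the uniform bound of $\int(1+x_N)^{k}|u_n|^{s_i}$ for two exponents $s_1<s<s_2$ in $(s_\gamma,p^*]$. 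Lemma \ref{LemCont} gives continuity of $x_N\mapsto\|u_n(\cdot,x_N)\|_{L^q}$ uniformly in $n$, which controls the slices near $x_N=0$.

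I expect the main obstacle to be that rearrangement does not preserve the pointwise value at a fixed $x'$ for non-monotone radial functions, so the Strauss-type decay must be proved honestly. Specifically, the Strauss-type decay of slices has to come from the weighted gradient. I would try the direct radial argument first: for radial $w$ on $\mathds{R}^{N-1}$, write $|w(r)|^{m}\le\int_r^\infty|\partial_\tau |w|^m|\,\mathrm{d}\tau$, multiply by $r^{N-2}$, and integrate also in $x_N$. Applying Hölder with the weight $(1+x_N)^\gamma$ and the $L^s$ bounds of Theorem \ref{inj2} then yields $\sup_n\int_{|x'|>R}|u_n|^s\to0$. The trace tail follows similarly, using the trace bound at $x_N=0$ and Lemma \ref{LemCont}.

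Combining local strong convergence with uniform small tails gives $u_n\to u$ in $L^s(\mathds{R}^N_+)$ and in $L^q(\mathds{R}^{N-1})$. This proves that the embeddings are compact.
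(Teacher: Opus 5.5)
Your framework is sound: Rellich on bounded sets, plus uniformly small tails, with the $x_N>R$ tail coming from \eqref{1} with $0<k<K(s)$. That last part is exactly the paper's final step. The gap is the crux you yourself flag, the uniform smallness of $\int_{\{|x'|>R\}\times(0,R_0)}|u_n|^s$, and neither mechanism you offer establishes it.

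\textbf{Why the proposed mechanisms fail.} Symmetrizing in $x'$ cannot help, because it moves mass toward the axis: $\int_{|x'|>R}|v^*|^q\le\int_{|x'|>R}|v|^q$. Small tails of $v^*$ therefore say nothing about tails of $v$, and $\|f_1^*-f_2^*\|\le\|f_1-f_2\|$ points the wrong way for transferring convergence back.

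The direct radial argument gives, slice by slice, $|u_n(x',t)|^m\le C|x'|^{-(N-2)}A_n(t)$, where $A_n(t)=\int_{\mathds{R}^{N-1}}|u_n|^{m-1}|\nabla_{x'}u_n|(y',t)\,\mathrm{d}y'$ is only integrable in $t$.
\begin{itemize}
\item If you integrate in $x_N$ first, you only control the vertical-line quantity $\int_0^{R_0}|u_n(r,t)|^m\,\mathrm{d}t\lesssim r^{-(N-2)}$. This loses pointwise control in $x_N$ and does not by itself give tail decay.
\item If you keep $t$ fixed, you get $\int_{|x'|>R}|u_n(x',t)|^s\,\mathrm{d}x'\le CR^{-(N-2)\theta/m}A_n(t)^{\theta/m}\|u_n(\cdot,t)\|^{s-\theta}_{L^{s-\theta}(\mathds{R}^{N-1})}$. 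This is a product of two slice quantities, and its $t$-integral is not controlled by the global weighted bounds $\int(1+x_N)^k|u_n|^{s_i}\le C$ of Theorem \ref{inj2}. Those bounds are all your ``H\"older with the weight'' step uses.
\end{itemize}

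\textbf{The missing ingredient.} You need a uniform-in-height slice bound $\sup_{0\le t\le R_0}\|u_n(\cdot,t)\|_{L^r(\mathds{R}^{N-1})}\le C$ for some $r\in(q_\gamma,p_*)$. It follows from the trace part of Theorem \ref{theo1} applied to the translates $u_n(\cdot,\cdot+t)$, whose energy is at most $\|u_n\|$ because the weight is increasing. It also follows from Lemma \ref{LemCont}, which you only use near $x_N=0$.

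With it, the argument closes as follows.
\begin{itemize}
\item Take $m=p_*$, so that $(m-1)p'=p^*$, and pick $\theta\in(\max(0,s-p_*),\min(s-q_\gamma,p_*))$ with $r=s-\theta$. This gives $\int_{\{|x'|>R\}\times(0,R_0)}|u_n|^s\le C R_0^{1-\theta/m}R^{-(N-2)\theta/m}$.
\item Fix $R_0$ first via \eqref{1}, and only then let $R\to\infty$. Your decomposition uses the same $R$ for the strip height and the radius, which makes the strip estimate grow.
\item Handle the trace tail by averaging over $x_N\in(0,\delta)$, for instance by applying Lemma \ref{LemCont} to $u_n-u$. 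The slice lemma cannot be used at $x_N=0$, since $\nabla_{x'}$ of the trace is not controlled.
\end{itemize}

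\textbf{How the paper handles this step.} It rearranges $u_n$ in the $x_N$ variable on $\mathds{R}^{N-1}\times(0,1)$. The rearranged $v_n$ is nonincreasing in $x_N$, so $v_n(x',x_N)\le x_N^{-1}\int_0^{x_N}v_n(x',t)\,\mathrm{d}t$. Lions' radial lemma applied to this vertical average yields the pointwise decay $v_n\le C\|u_n\||x'|^{-(N-1-p)/p}x_N^{-1/p}$. The Berestycki--Lions lemma then gives strong convergence of $v_n$, and equimeasurability plus uniform convexity carry it back to $u_n$. Monotonicity in $x_N$ is exactly what lets an $x_N$-integrated radial estimate control the tail, and that is the step your sketch lacks.

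Once repaired, your $L^1$-type radial lemma has one advantage: it works for every $p<N$. The $\mathcal{D}^{1,p}(\mathds{R}^{N-1})$ radial lemma the paper invokes needs $p<N-1$.
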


\begin{proof}
We start by showing the compact embedding in $L^q(\mathds{R}^{N-1})$. Let $\{u_n\}_{n\in\mathds{N}}$ a bounded sequence of $\mathcal{R}$ such that $u_n\rightharpoonup u$ in $\mathcal{R}$ as $n\to \infty$. We set $v_n=({u_n}_{| (0,1)})^*$ the decreasing rearrangement in $x_N$ variable of the restriction of $u_n$ on $\mathds{R}^{N-1}\times (0,1)$. Thus, $v_n$ is nonincreasing in $x_N$ and radially symmetric in $x'$. By the equimeasurability property, note that we have for all $r<\infty$ and a.e. $x'\in \mathds{R}^{N-1}$:
$$\int_0^1 |u_n|^r(x',t)\mathrm{d}t=\int_0^1 |v_n|^r(x',t)\mathrm{d}t,$$
and by nonexpansive property
$$\int_0^1 |({w_1}_{| (0,1)})^*(x',t)-({w_2}_{| (0,1)})^*(x',t)|^r \mathrm{d}t\leq \int_0^1 |w_1-w_2|^r(x',t)\mathrm{d}t.$$
Taking $r=p$, $h\in \mathds{R}$, $0\neq x'_0\in \mathds{R}^{N-1}$,  $w_1(x',t)=u_n( x' +hx'_0,t)$, $w_2(x',t)=u_n(x',t)$ we get by passing to the limit the difference quotients as $h\to 0$:
$$\int_0^1 |\nabla_{x'} v_n|^p\mathrm{d}t\leq c\int_0^1 |\nabla_{x'} u_n|^p\mathrm{d}t$$
from which we easily get that there exists $C>0$ independent of $n$ such that
\begin{equation}\label{boundedsob}
\int_{\mathds{R}^{N-1}\times (0,1)}|\nabla v_n|^p\mathrm{d}t\leq C.
\end{equation}
For $x_N\leq 1$, set $w(x')=\int_0^{x_N} v_n(x',t)\mathrm{d}t$. Then by using \cite[Corollaire II.1, p. 322]{PLLions}, we get:
\begin{equation}\label{estw0}
|w|\leq C\| \nabla_{x'} w \|_{L^p(\mathds{R}^{N-1})}|x'|^{-(N-1-p)/p}.
\end{equation}
Furthermore, by Jensen inequality, we infer

\begin{equation}\label{estw1}
\begin{split}
\| \nabla_{x'} w \|_{L^p(\mathds{R}^{N-1})}^p=\int_{\mathds{R}^{N-1}}\left| \int_0^{x_N} \nabla_{x'} v_n \mathrm{d}t \right|^p\mathrm{d}x'&\leq x_N^{p-1} \int_{\mathds{R}^{N-1}}\int_0^{x_N} |\nabla_{x'} v_n|^p\mathrm{d}t\mathrm{d}x' \\
&\leq c x_N^{p-1}\int_{\mathds{R}^{N-1}}\int_0^{1} |\nabla_{x'} u_n|^p\mathrm{d}t\mathrm{d}x'.
\end{split}
\end{equation}
Since $v_n$ is nonincreasing together with \eqref{estw0} and \eqref{estw1}, we obtain:

$$ v_n(x',x_N)\leq w(x') x_N^{-1}\leq C \|u_n\| |x'|^{-(N-1-p)/p}x_N^{-1/p} .$$
We also have:

$$\int_{\mathds{R}^{N-1}}\int_0^{1} |v_n|^q\mathrm{d}t\mathrm{d}x' =\int_{\mathds{R}^{N-1}}\int_0^{1} |u_n|^q \mathrm{d}t\mathrm{d}x'\leq C\|u_n\|^q $$
for all $q\in (q_\gamma, p^*]$. Thus, we get from \cite[Theorem A.1, p. 338]{Berestycki-Lions} together with \eqref{boundedsob} that up to a subsequence, there exists $v\in \mathcal{R}$ such that  $v_n\to v$ in $L^q(\mathds{R}^{N-1}\times (0,1))$ as $n\to\infty$, for all $q\in (q_\gamma, p^*)$. Furthermore, we can identify $v=({u}_{|(0,1)})^*$. Indeed, using compact embedding on $B\times (0,1)$, on has

$$\int_B \int_0^1 |v_n -({u}_{|(0,1)})^*|^q \mathrm{d}t\mathrm{d}x'\leq \int_B \int_0^1 |u_n -u|^q \mathrm{d}t\mathrm{d}x'\to 0,$$
for all $B$ compact set of $\mathds{R}^{N-1}$. It is not difficult from the uniqueness of such $v$ that the sequence $\{v_n\}_{n\in\mathds{N}}$ converges to $v$ in $L^q(\mathds{R}^{N-1}\times (0,1))$. Finally, using:

$$\int_{\mathds{R}^{N-1}\times (0,1)} |u_n|^q\mathrm{d}x 	=\int_{\mathds{R}^{N-1}\times (0,1)} |v_n|^q\mathrm{d}x \to \int_{\mathds{R}^{N-1}\times (0,1)} |v|^q \mathrm{d}x = \int_{\mathds{R}^{N-1}\times (0,1)} |u|^q \mathrm{d}x$$
we conclude by uniform convexity of $L^q$-space that $u_n\to u$ in $L^q(\mathds{R}^{N-1}\times(0,1))$ as $n\to\infty$, for all $q\in(q_\gamma, p^*)$. Thus $\|u_n\|_{L^q(\mathds{R}^{N-1})} \to \|u\|_{L^q(\mathds{R}^{N-1})}$ as $n\to\infty$ almost everywhere in $(0,1)$, and using Lemma \ref{LemCont}  we have $u_n\to u$ in $L^q(\mathds{R^{N-1})}$ as $n\to\infty$ for all $q\in(q_\gamma,p_*)$.

We now show $u_n\to u$ in $L^s(\mathds{R}^{N}_+)$ as $n\to\infty$ for $s\in(s_\gamma, p^*)$. We already have $u_n\to u$ in $L^s(\mathds{R}^{N-1}\times(0,R))$ as $n\to\infty$ for all $R>0$, and using \eqref{1} we have:
$$\int_R^\infty \int_{\mathds{R}^{N-1}} |u_n-u|^s\mathrm{d}x^\prime \mathrm{d}t\leq C (1+R)^{-k}\| u_n-u\|.$$
Thus taking $K,n$ big enough in the previous inequality, we conclude that $u_n\to u$ in $L^s(\mathds{R}^{N}_+)$ as $n\to\infty$ for all $r\in(s_\gamma, p^*)$.
\end{proof}
We also get
\begin{proposition}\label{inj N=p}
Let $p=N$, $\gamma>0$, $q> q_\gamma$ and $s> s_\gamma$. Then, we have that:

\begin{itemize}
    \item [(i)] the embeddings $\mathcal{D}^{1,p}_\rho(\mathds{R}^N_+)\hookrightarrow  L^{s}(\mathds{R}^N_+), L^{q}(\mathds{R}^{N-1})$ are continuous,
    
    \item [(ii)] the embeddings $\mathcal{R}_\rho(\mathds{R}^N_+)\hookrightarrow  L^{s}(\mathds{R}^N_+), L^{q}(\mathds{R}^{N-1})$ are compact,
    
    \item [(iii)] the embedding $\mathcal{D}^{1,p}_\rho (\mathds{R}^N_+)\hookrightarrow C([0,\infty),L^q(\mathds{R}^{N-1}))$, is compact and more precisely, for some $\beta>0$, the trace operator restricted to $\mathcal{D}^{1,p}_\rho(\mathds{R}^N_+)$ is locally $\beta$-Lipschitz in $L^q$ with respect to $x_N$ variable. 
\end{itemize}
\end{proposition}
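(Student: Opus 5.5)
The idea is to reduce the borderline case $p=N$ to the already established case $p<N$, applied with an auxiliary exponent $\alpha<N$, through Proposition \ref{Dinj}. Proposition \ref{Dinj} is stated for $p\le N$. For $u\in C^\infty_\delta(\mathds{R}^N_+)$ and $\alpha\in(1,N)$ it controls $|u|^{\tilde q}$ in $\mathcal{D}^{1,\alpha}_{\tilde\rho}(\mathds{R}^N_+)$, where $\tilde\rho=(1+x_N)^{\gamma\alpha/N}$ and $\tilde q=\frac{\alpha N}{\alpha N-\sigma(N-\alpha)}$. Since $\alpha<N$, we may apply to $|u|^{\tilde q}$ the $p<N$ results with exponent $\alpha$ and weight exponent $\tilde\gamma=\gamma\alpha/N>0$:
\begin{itemize}
\item Theorem \ref{inj2} gives the Sobolev embedding into $L^{\sigma'}(\mathds{R}^N_+)$ for every $\sigma'\in[s_{\alpha,\tilde\gamma},\alpha^*]$;
\item Theorem \ref{improvetheo2.2} (if $\tilde\gamma\le\alpha-1$), or Proposition \ref{embedhardy} (if $\tilde\gamma>\alpha-1$), gives the trace embedding into $L^{r}(\mathds{R}^{N-1})$ for every $r$ in the corresponding range.
\end{itemize}
This yields $\|u\|_{L^{\tilde q\sigma'}(\mathds{R}^N_+)}+\|u\|_{L^{\tilde q r}(\mathds{R}^{N-1})}\le C\|u\|$.

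For (i), the remaining work is bookkeeping of exponents. With $p=N$ one has $s_\gamma=N^2/\gamma$ and $q_\gamma=N(N-1)/\gamma$ when $\gamma\le N-1$ (respectively $N$ when $\gamma$ is large). Computing as in the proof of Theorem \ref{inj2}, the lower end $\tilde q\,s_{\alpha,\tilde\gamma}$ equals $s_\gamma$. Letting $\sigma\to\alpha^*$ and $\alpha\to N$ sends the upper end $\tilde q\alpha^*$ to $\infty$, as noted in Proposition \ref{Dinj}. On the trace side, the infimum over admissible $\alpha$ of the lower ends equals $q_\gamma$; this is the same computation as in Theorem \ref{improvetheo2.2}, now with $p=N$. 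Hence every $s>s_\gamma$ and every $q>q_\gamma$ is covered, possibly after interpolating between two admissible exponents, and density extends the inequalities to $\mathcal{D}^{1,N}_\rho(\mathds{R}^N_+)$.

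For (iii), I repeat the proof of Lemma \ref{LemCont} verbatim, since it only uses the Hölder estimate in $x_N$ together with the Sobolev and trace embeddings. The space $L^{p^*}$ there is replaced by some large finite $L^{m}$ with $m>p'(q-1)$, which is now available by (i). This gives $\big|\|u(\cdot,x_N+h)\|_q^q-\|u(\cdot,x_N)\|_q^q\big|\le Ch^\beta\|u\|^q$.

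For (ii), I follow the proof of Theorem \ref{compa}. First take the rearrangement in $x_N$ on the strip $\mathds{R}^{N-1}\times(0,1)$. Next derive the pointwise decay of $v_n$ from Lions' radial lemma; since $N-1<N=p$, this should be done in $W^{1,\alpha}$ with some $\alpha<N-1$, or directly via Strauss-type bounds. Then apply the Berestycki–Lions compactness lemma to get strong convergence in $L^q$ of the strip. Lemma \ref{LemCont}, in the form of (iii), transfers this to the boundary, and the weighted bound \eqref{1} controls the tails in $x_N$. The weighted estimate \eqref{1} must also be obtained for $p=N$, and this comes out of the same reduction: the weight $(1+x_N)^{k}$ with $k>0$ appears once $s>s_\gamma$.

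The main obstacle is this tail control, together with the radial decay when $p=N\ge N-1$. The estimate \eqref{estw0} is used with exponent $p<N-1$, so I would redo it with an auxiliary $\alpha<N-1$ applied to $|u_n|^{\tilde q}$, accepting a weaker decay rate, which still suffices for the Berestycki–Lions lemma as long as the integrability range is nonempty. The compactness claimed in (iii) I read as local in $x_N$, in the sense of the equicontinuity statement. Whether global compactness really holds I would check only after (ii) is established.
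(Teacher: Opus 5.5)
Your treatment of (i) and of the Hölder-type estimate in (iii) is the paper's own argument. It uses Proposition \ref{Dinj} with $\alpha<N$ and weight $(1+x_N)^{\gamma\alpha/N}$, then Theorem \ref{inj2} (or Hardy) at level $\alpha$. The identity $\tilde q\,s_\alpha=N^2/\gamma=s_\gamma$ and its trace analogue give $q_\gamma$, and (iii) is a rerun of Lemma \ref{LemCont} with $L^{p^*}$ replaced by a large finite exponent.

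The paper's proof says nothing about (ii), so there you go beyond it. You are right that \eqref{estw0} gives no decay when $p=N$ (indeed whenever $p\ge N-1$). Running the radial lemma on $|u_n|^{\tilde q}\in\mathcal{D}^{1,\alpha}_{\tilde\rho}(\mathds{R}^N_+)$ with $\alpha\in(1,N-1)$ repairs that step when $N\ge 3$. For $N=2$, (ii) is in fact false: take $\phi(x_1-n,x_2)+\phi(x_1+n,x_2)$ with $\phi$ even in $x_1$. The weighted bound \eqref{1} with some $k>0$ for $s>s_\gamma$ does follow as you say, from Theorem \ref{inj2}(ii) at level $\alpha$ applied to $|u|^{\tilde q}$.

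Two points still need fixing.

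\textbf{The Berestycki--Lions step.} The pointwise bound the rearrangement produces has the form $v_n(x',x_N)\le C|x'|^{-a}x_N^{-b}$ with $b>0$. This is not uniform as $|x|\to\infty$ in the strip $S=\mathds{R}^{N-1}\times(0,1)$, because $x_N$ may tend to $0$. So the hypothesis of \cite[Theorem A.1]{Berestycki-Lions} fails on $S$; the paper's Theorem \ref{compa} has the same defect. Apply that lemma only on $\mathds{R}^{N-1}\times(\delta,1)$, and control the remaining layer by
\[
\int_{\mathds{R}^{N-1}}\int_0^\delta v_n^q\,\mathrm{d}t\,\mathrm{d}x'\le\delta\int_{\mathds{R}^{N-1}}\sup_{t\in(0,1)}|u_n(x',t)|^q\,\mathrm{d}x'\le\delta\Big(\|u_n\|_{L^q(S)}^q+q\|u_n\|_{L^{(q-1)N'}(S)}^{q-1}\|\nabla u_n\|_{L^N(S)}\Big)\le C\delta.
\]
The middle inequality is the one-dimensional estimate $\sup|f|^q\le\int_0^1|f|^q+q\int_0^1|f|^{q-1}|f'|$ applied on each fibre, followed by Hölder. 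For the last one, note that $q>q_\gamma\ge N$ gives $(q-1)N'>q_\gamma$. Hence both strip norms are bounded by the slice-wise trace bounds from (i), applied to vertical translates.

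\textbf{Compactness in (iii).} The compactness asserted in (iii) cannot be proved, because it is false, even on bounded $x_N$-intervals. Take $u_n(x',x_N)=\phi(x'-ne_1,x_N)$. Then $\|u_n\|=\|\phi\|$ and $u_n\rightharpoonup 0$, while $\|u_n(\cdot,0)-u_m(\cdot,0)\|_{L^q(\mathds{R}^{N-1})}$ stays bounded away from $0$ for $|n-m|$ large. So there is nothing to check later. Only the equi-Hölder continuity in $x_N$ holds, which is also all that the paper's appeal to Lemma \ref{LemCont} yields. Any compactness into $C([0,T],L^q(\mathds{R}^{N-1}))$ can only hold on the symmetric subspace $\mathcal{R}$.
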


\begin{proof}
We only prove the Sobolev embedding, the proof of trace embedding is similar. Using Proposition \ref{Dinj}, we have for $\alpha\in(1,N)$, $s_\alpha\leq s< \alpha^*$, $|u|^q\in \mathcal{D}^{1,\alpha}_{\tilde{\rho}}(\mathds{R}^N_+)$ and 
$$\mathcal{D}^{1,p}_\rho(\mathds{R}^N_+)
\hookrightarrow L^{qs}(\mathds{R}^N_+),$$
where $\tilde \rho(x_N)=(1+x_N)^\frac{\gamma\alpha}{p}$ and $q=\frac{\alpha N}{\alpha N -s(N-\alpha)}$. In particular when $s\to \alpha^*$ we get $sq\to N^*=\infty$. For the case $\gamma\geq p$ the weighted Hardy inequality \eqref{Hardy p} still holds and we get the embedding
$$\mathcal{D}^{1,p}_\rho(\mathds{R}^N_+)\hookrightarrow  L^s(\mathds{R}^N_+),$$
for all $s\geq p$. When $\gamma<p$, using Theorem \ref{inj2} and Proposition \ref{Dinj}, we have the embedding  for all $s>\inf_{\alpha\in(1,N)} \frac{\alpha N s_{\alpha}}{\alpha N -s_{\alpha}(N-\alpha)}$  where $s_\alpha=\frac{\alpha N}{N-\alpha+\gamma \alpha/N}$ and as in the proof of Theorem \ref{inj2} we have:
$$\frac{\alpha N s_{\alpha}}{\alpha N -s_{\alpha}(N-\alpha)}=\frac{N^2}{\gamma}=s_\gamma.$$

Finally arguing as in the proof of Lemma \ref{LemCont}, we get $(iii)$.

\end{proof}

Then, we are ready to give:
\begin{proof}[Proof of Theorem \ref{theo1}]
If $1<p<N$, the conclusion follows directly from Proposition \ref{embedhardy}, Theorem \ref{inj2}, and Theorem \ref{improvetheo2.2}. In the limiting case $p=N$, Proposition \ref{Dinj} yields
\begin{equation*}
\left(\int_{\mathds{R}^N_+}(1+x_N)^{\frac{\gamma \alpha}{p}} |\nabla |u|^q|^\alpha , \mathrm{d}x\right)^{\frac{1}{q\alpha}} \leq C \|u\|,
\end{equation*}
for $\alpha<N$, where $q=\frac{\alpha N}{\alpha N - s_\alpha (N-\alpha)}$.

Applying Theorem \ref{inj2} and using that
\begin{equation*}
\frac{\alpha N s_\alpha}{\alpha N - s_\alpha (N-\alpha)} = s_\gamma,
\end{equation*}
we obtain
\begin{equation*}
\left(\int_{\mathds{R}^N_+}(1+x_N)^k |u|^{s_\gamma} , \mathrm{d}x\right)^{\frac{1}{s_\gamma}} \leq C \|u\|.
\end{equation*}

Then the trace estimate for this case, and hence the full result, follow from (i) of Proposition \ref{inj N=p}.

\end{proof}
\begin{proof}[Proof of Theorem \ref{compa1}]
It follows from Theorem \ref{compa} and (ii) of Proposition \ref{inj N=p}.

\end{proof}
\begin{remark}
As the Hardy-Sobolev inequality \eqref{Hardy p} still holds when $p>N$, we have $\mathcal{D}^{1,p}_\rho(\mathds{R}^N_+)\hookrightarrow  L^{q}(\mathds{R}^{N-1}),$ for $q\in [p,\infty]$ when $\gamma>p-1$ and $\mathcal{D}^{1,p}_\rho(\mathds{R}^N_+)\hookrightarrow L^{s}(\mathds{R}^N_+),$ for $s\in [p,\infty]$ when $\gamma\geq p$. Using Theorems \ref{inj2}, \ref{improvetheo2.2} and Proposition \ref{Dinj} and doing $\alpha\to\frac{p}{p-\gamma}^-$ (resp. $\alpha=N$) if $\frac{p}{p-\gamma}\leq N$ (resp. $\frac{p}{p-\gamma}> N$), in case $\gamma\leq p-1$ the embeddings become $\mathcal{D}^{1,p}_\rho(\mathds{R}^N_+)\hookrightarrow L^{s_\gamma}(\mathds{R}^N_+)$ and $\mathcal{D}^{1,p}_\rho(\mathds{R}^N_+)\hookrightarrow L^{r}(\mathds{R}^{N-1}),$ where
$$r>\begin{cases}
q_\gamma & \text{if }\frac{p}{p-\gamma}\leq N, \\
 \frac{N\gamma}{N-p+\gamma}   & \text{if } \frac{p}{p-\gamma}> N.
\end{cases}$$
\end{remark}

Next, we prove Theorem \ref{TruMos}. A famous result of  N. Trudinger states that for $\Omega$ a domain with finite measure in $\mathds{R}^N$, with $N \geq 2$, there exists a constant $\alpha > 0$ such that
$$\frac{1}{|\Omega|} \int_{\Omega} \exp \left( \alpha |u|^{\frac{N}{N-1}} \right) \mathrm{d}x \le c_0$$
for any $u \in W_0^{1,N}(\Omega)$ with $\|\nabla u\|_{L^N(\Omega)} \le 1$. It was also proved by J. Moser that the best constant is
\begin{equation}\label{alphaN}
\alpha_N=N\omega_{N-1}^\frac{1}{N-1}
\end{equation}
where $\omega_{N-1}$ is the area of the unit $N$-ball. We now investigate Trudinger-Moser inequality for the weighted space $\mathcal{D}^{1,N}_\rho(\mathds{R}^N_+) $. We define $D^{N,s}(\mathds{R}^N)$ as the completion of $C^\infty_0(\mathds R^N)$ under the norm $\|\nabla u \|_{L^N(\mathds R^N)}+\|u\|_{L^s(\mathds R^N)}$. 
We recall the following weighted Trudinger Moser inequality in unbounded domain case:
\begin{theorem}(\cite[Theorem 1.1]{lam-lu-zhang})\label{TruMosunb}
Let $0\leq \beta<N$, $0\leq \alpha<\alpha_N$, $s\geq 1$, $q > s\!\left(1 - \frac{\beta}{N}\right)$ or $q \geq s$ if $\beta = 0 $. Then there exists a positive constant $C> 0$ such that for all $u \in D^{N,s}(\mathds{R}^N)$, $\|\nabla u\|_{L^N(\mathds R^N)} \leq 1$,
$$\int_{\mathds{R}^N} \frac{\exp\!\left(\alpha\!\left(1 - \frac{\beta}{N}\right)|u|^{\frac{N}{N-1}}\right)\,|u|^q}{|x|^\beta}\, \mathrm{d}x \leq C \|u\|_{L^s(\mathds R^N)}^{\,s\left(1 - \frac{\beta}{N}\right)}.$$
Moreover, this constant $\alpha_N$ is the best possible in the sense that if $\alpha \geq \alpha_N$, then the constant $C$ cannot be uniform in functions $u$.
\end{theorem}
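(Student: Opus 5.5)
The plan is the standard ``level set splitting'' argument. First I reduce to radial functions by Schwarz symmetrization, and then I treat the region where $|u|$ is large and the region where $|u|$ is small separately.

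\emph{Reduction to radial functions.} Let $u\in D^{N,s}(\mathds{R}^N)$ with $\|\nabla u\|_{L^N}\le 1$, and let $u^\sharp$ be its Schwarz symmetrization. The P\'olya--Szeg\H{o} inequality gives $\|\nabla u^\sharp\|_{L^N}\le \|\nabla u\|_{L^N}\le 1$. Equimeasurability gives $\|u^\sharp\|_{L^s}=\|u\|_{L^s}$. The map $t\mapsto \exp(\alpha(1-\beta/N)t^{N/(N-1)})t^q$ is increasing, and $|x|^{-\beta}$ is radially decreasing. Hence the Hardy--Littlewood rearrangement inequality shows that the left-hand side only increases when $u$ is replaced by $u^\sharp$. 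So it suffices to prove the estimate for nonnegative, radially nonincreasing $u$.

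\emph{Low region $\{u\le 1\}$.} On this set the exponential factor is bounded by $e^{\alpha}$, so I must bound $\int_{\{u\le1\}}u^q|x|^{-\beta}\,\mathrm{d}x$. Let $\Omega=\{u>1\}$, which is a ball $B_{R_0}$ with $|B_{R_0}|\le \|u\|_s^s$. Monotonicity gives the radial decay $u(r)^s|B_r|\le \|u\|_s^s$. I then write $u^q=u^{s(1-\beta/N)}\,u^{q-s(1-\beta/N)}$.
\begin{itemize}
\item For $r\le R_0$ one has $u\ge 1$, so this part of the integral is empty.
\item For $r\ge R_0$, since $q-s(1-\beta/N)>0$ and $u\le \min(1,\|u\|_s r^{-N/s}c)$, I integrate in polar coordinates, splitting at the radius where $c\|u\|_s r^{-N/s}=1$.
\end{itemize}
The hypothesis $q>s(1-\beta/N)$ is exactly what makes the tail $\int^\infty (\|u\|_s r^{-N/s})^q r^{N-1-\beta}\,\mathrm{d}r$ converge. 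The same splitting radius should produce a bound of the form $C\|u\|_s^{s(1-\beta/N)}$, by the same homogeneity bookkeeping. When $\beta=0$ and $q=s$ the estimate is immediate, since $\int u^s=\|u\|_s^s$.

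\emph{High region $\Omega=\{u>1\}$.} Here set $v=(u-1)^+\in W^{1,N}_0(\Omega)$, so that $\|\nabla v\|_{L^N(\Omega)}\le 1$. For any $\varepsilon>0$,
\[
u^{N/(N-1)}\le (1+\varepsilon)v^{N/(N-1)}+C_\varepsilon ,
\]
and the polynomial factor satisfies $u^q\le C_\varepsilon \exp(\varepsilon v^{N/(N-1)})$. Since $\alpha<\alpha_N$, I choose $\varepsilon$ small so that $\alpha'=\alpha(1+\varepsilon)+\varepsilon<\alpha_N$. The singular Trudinger--Moser inequality on domains of finite measure (Adimurthi--Sandeep) then gives
\[
\int_\Omega \exp\!\big(\alpha'(1-\beta/N)v^{N/(N-1)}\big)|x|^{-\beta}\,\mathrm{d}x\le C|\Omega|^{1-\beta/N}.
\]
This is the version normalized by the measure of the domain, and it follows from Moser's inequality after the change of variables $|y|=|x|^{1-\beta/N}$ in the radial case. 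Since $|\Omega|\le \|u\|_s^s$, this region also contributes $C\|u\|_s^{s(1-\beta/N)}$.

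\emph{Main obstacle and sharpness.} The step I expect to be the main obstacle is getting the exact exponent $s(1-\beta/N)$ uniformly. The $\varepsilon$-loss in the high region must not spoil the constant, and the polynomial factor $u^q$ must be absorbed without consuming the margin $\alpha_N-\alpha$; this is why $\alpha<\alpha_N$ is strict. For the optimality of $\alpha_N$, I would test Moser's sequence $m_k$ (logarithmic profile, supported in the unit ball, $\|\nabla m_k\|_N=1$). These have $\|m_k\|_s\to0$, while for $\alpha\ge\alpha_N$ the weighted exponential integral stays bounded below or blows up. Hence no uniform $C$ can exist.
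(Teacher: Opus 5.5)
The paper gives no proof of this statement. It is quoted from Lam--Lu--Zhang and used as a black box in the proof of Theorem~\ref{TruMos}, where it is applied to the even reflection $2^{-1/N}Su$. Your proposal supplies a self-contained proof, and it is correct.

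It follows the standard subcritical route:
\begin{itemize}
\item Schwarz symmetrization, using P\'olya--Szeg\H{o} and Hardy--Littlewood against the symmetric decreasing weight $|x|^{-\beta}$; for $\beta=0$, equimeasurability alone suffices.
\item Splitting at the level $1$.
\item On $\Omega=\{u>1\}$, the scale-invariant singular Moser bound $C|\Omega|^{1-\beta/N}$ together with $|\Omega|\le\|u\|_{L^s}^s$.
\item On the complement, the radial decay $u(r)\le c\|u\|_{L^s}r^{-N/s}$.
\end{itemize}

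The low-region step you left as ``should produce'' does close. Set $r_1=(c\|u\|_{L^s})^{s/N}$. Then
\[
\int_0^{r_1}r^{N-1-\beta}\,\mathrm{d}r+\int_{r_1}^\infty (c\|u\|_{L^s})^{q}\,r^{N-1-\beta-Nq/s}\,\mathrm{d}r
=\Big(\frac{1}{N-\beta}+\frac{1}{Nq/s-N+\beta}\Big)r_1^{N-\beta}=C\|u\|_{L^s(\mathds{R}^N)}^{s(1-\beta/N)}.
\]
This is exactly where $q>s(1-\beta/N)$ enters. For $\beta=0$ and $q\ge s$, simply use $u^q\le u^s$ on $\{u\le 1\}$.

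In the high region there is a small bookkeeping point. The absorbed term $\varepsilon v^{N/(N-1)}$ is not multiplied by $1-\beta/N$. So $\alpha'$ should be defined by $\alpha'(1-\beta/N)=\alpha(1+\varepsilon)(1-\beta/N)+\varepsilon$. This is cosmetic, and the strict inequality $\alpha<\alpha_N$ is precisely what pays for these $\varepsilon$-losses.

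Your sharpness test also works. For $\alpha=\alpha_N$, the exponent on $B_{1/k}$ equals $(N-\beta)\log k$, which cancels $\int_{B_{1/k}}|x|^{-\beta}\,\mathrm{d}x\sim k^{-(N-\beta)}$. Hence the left side is $\gtrsim(\log k)^{q(N-1)/N}\to\infty$, while $\|m_k\|_{L^s}\to 0$.

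Compared with the paper's citation, your argument shows where each hypothesis is used: $q>s(1-\beta/N)$ for integrability of the tail, and $\alpha<\alpha_N$ for absorbing the losses. It relies only on standard ingredients: P\'olya--Szeg\H{o} for $D^{N,s}$ functions, and either Adimurthi--Sandeep or Moser's inequality after the change of variables $|y|=|x|^{1-\beta/N}$. The citation is simply more economical.
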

Theorem \ref{TruMos} extends it in case of the weighted Sobolev space $\mathcal{D}^{1,N}_\rho(\mathds{R}^N_+)$.


\begin{proof}[Proof of Theorem \ref{TruMos}]
Let $u\in\mathcal{D}^{1,N}_\rho(\mathds{R}^N_+)$ such that $\|u\|\leq 1$. Defining $Su(x',x_N)=u(x',|x_N|)$ we have 
\[
\|\nabla Su \|_{L^N(\mathds{R}^N)}^N=2\|\nabla u\|_{L^N(\mathds{R}^N_+)}^N\leq 2\|u\|^N.
\]
Using Theorem \ref{inj N=p}, we have for $s> s_\gamma$:
\[
\|Su\|_{L^s(\mathds R^N)}^{s}=2\|u\|_{L^s(\mathds R^N_+)}^{s}\leq C \|u\|^s\leq C.
\]
Then, $Su \in D^{N,s}(\mathds{R}^N)$ and $\|\nabla (2^{-1/N}Su)\|_{L^N(\mathds{R}^N)}\leq 1$. Using Theorem \ref{TruMosunb}, we have for $0\leq \beta<N$ and $q > s(1 - \frac{\beta}{N})$ (or $q \geq s$ if $\beta = 0 $):
\begin{align*}
   2 \int_{\mathds{R}^N_+} \frac{\exp\!\left(\alpha\!\left(1 - \frac{\beta}{N}\right)|u|^{\frac{N}{N-1}}\right)\,|u|^q}{|x|^\beta}\, \mathrm{d}x=&\int_{\mathds{R}^N} \frac{\exp\!\left(\alpha\!\left(1 - \frac{\beta}{N}\right)|Su|^{\frac{N}{N-1}}\right)\,|Su|^q}{|x|^\beta}\, \mathrm{d}x\\
   =&2^{q/N}\int_{\mathds{R}^N} \frac{\exp\!\left(2^{\frac{1}{N-1}}\alpha\!\left(1 - \frac{\beta}{N}\right)|2^{-1/N} Su|^{\frac{N}{N-1}}\right)\,|2^{-1/N}Su|^q}{|x|^\beta}\, \mathrm{d}x\\
   \leq& C \|Su\|_{L^s(\mathds R^N)}^{\,s\left(1 - \frac{\beta}{N}\right)}
\end{align*}
for all $0\leq \alpha< 2^{-\frac{1}{N-1}}\alpha_N$, providing the desired result.
\end{proof}

\begin{remark}
Note that when $q=N$ (for example when $\gamma\geq p$), there exists $ C_{\alpha,N}>0$ such that
$$\exp\!\left(\alpha\!\left(1 - \frac{\beta}{N}\right)|u|^{\frac{N}{N-1}}\right)|u|^N \geq C_{\alpha,N}\, \Psi_N\!\left(\alpha\!\left(1 - \frac{\beta}{N}\right)|u|^{\frac{N}{N-1}}\right),$$
where $\Psi$
$$\Psi _N(t)=\exp(t)-\sum^{N-2}_{j=0}\frac{t^j}{j!}.$$
It coincides with Young function in \cite[Theorem 1.5]{Abreu-Furtado-Medeiros2} where the authors prove that
$$\sup_{\{u \in E^{1,N,\gamma} : \|u\|_{E^{1,N,\gamma}} \le 1\}} \int_{\mathds{R}^N_+} \frac{\Psi_N(u)}{(1+x_N)^\beta} \, \mathrm{d}x < +\infty, \quad \forall\, 0 < \alpha < \alpha^*,$$
where $E^{1,p,\gamma}$ is the space defined as the closure of $C_0^\infty(\mathds{R}^N)$ with respect to the norm
$$\|u\|_{E^{1,p,\gamma}} := \left( \int_{\mathds{R}^N_+} \left[ (1+x_N)^\gamma |\nabla u|^p + \frac{|u|^p}{(1+x_N)^{p-\gamma}} \right] \mathrm{d}x \right)^{1/p}.$$ Theorem \ref{TruMos} extends it for $\gamma<N-1$.
\end{remark}

\begin{proof}[Proof of Theorem \ref{bestcons}]
We note $Su$ the Schwartz symmetrisation of $u$ with respect to $x'$, that is the nonincreasing rearrangement of $u(\cdot,x_N)$. Using P\'olya–Szeg\"o inequality and \cite[Lemma 2.2]{NB} we have 
$$S_q=\inf_{V, \|u\|_{L^q(\mathds{R} ^{N-1})}=1} \|Su\|=\inf_{\mathcal{R}, \|v\|_{L^q(\mathds{R} ^{N-1})}=1} \|v\|.$$
Let $\{v_n\}_{n\in\mathds{N}}\subset \mathcal{R}$ be a minimizing sequence, that is $\|v_n\|\to S_q$ as $n\to\infty$ and $\|v_n\|_{L^q(\mathds{R} ^{N-1})}=1$. Using Theorem \ref{compa1}, we get $\|v\|_{L^q(\mathds{R} ^{N-1})}=1$ and since $\{v_n\}_{n\in\mathds{N}}$ is bounded in $\mathcal{R}$ we have $\|v\|\leq \displaystyle\liminf_{n\to\infty} \|v_n\|<\infty$ which implies $\|v\|=S_q$. Arguing similarly when $s\in(s_\gamma,p^*)$, we obtain that the constant $S_s$ is attained.
\end{proof}

\section{Regularity Results}\label{secReg}

\begin{lemma}\label{Linfty}
Let condition $(\rho_0)$ be satisfied, and suppose that $u$ is a 
weak solution. Then the following assertions are valid:
\begin{itemize}
    \item[$(i)$] If $u \in E_q(\mathds{R}^{N-1})$, with $a > 0$, $b \leq 0$, $q \in (1,\infty)$ and $\gamma>0$ with $s \in (s_\gamma,p^\ast)$ or $\gamma\geq p$ with $s \in [p,p^\ast)$, then $u \in L^\infty(\mathds{R}^N_+)$;
    
    \item[$(ii)$] If $u \in E_s(\mathds{R}^N_+)$, with $a \leq 0$, $b > 0$, $s \in  (1,\infty)$ and $\gamma>0$ with $q \in (q_\gamma, p_\ast)$ or $\gamma>p-1$ with $q \in [p, p_\ast)$ then $u \in L^\infty(\mathds{R}^{N-1})$.
\end{itemize}
\end{lemma}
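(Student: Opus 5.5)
I plan a Moser iteration for Lemma \ref{Linfty}, using the weighted embeddings of Theorem \ref{theo1} (and Proposition \ref{embedhardy} in the cases $\gamma\ge p$, $\gamma>p-1$). Consider first case $(i)$. Since $b\le 0$, the boundary term has a favourable sign when tested against $|u_T|^{\beta-1}u$ type functions, so it can be dropped. For $\beta\ge 1$ and $T>0$ I test \eqref{varFor} with $\varphi=u\,\min(|u|,T)^{p(\beta-1)}$. This is admissible by density, and the hypothesis $u\in E_q(\mathds{R}^{N-1})$ guarantees that the boundary integral is finite. Set $w_T=u\,\min(|u|,T)^{\beta-1}$. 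The standard computation gives
$$
C_0\int_{\mathds{R}^N_+}(1+x_N)^\gamma|\nabla w_T|^p\,\mathrm{d}x\le C\beta^{p-1}\,a\int_{\mathds{R}^N_+}|u|^{s-p}|w_T|^p\,\mathrm{d}x ,
$$
and the boundary contribution $b\int|u|^q\min(|u|,T)^{p(\beta-1)}\le 0$ is discarded.

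Next I apply Theorem \ref{theo1} with $k=0$ (valid since $s\ge s_\gamma$). This bounds the left side from below by $\|w_T\|_{L^{\sigma}(\mathds{R}^N_+)}^p$ with $\sigma=s_\gamma$. More conveniently, I take some fixed $\sigma\in(s,p^*)$ allowed by Theorem \ref{inj2}; when $s=p$ and $\gamma\ge p$, Proposition \ref{embedhardy} with $\sigma=p^*$ does the job. The right side is handled by splitting $|u|^{s-p}$ into the region $\{|u|\le M\}$ and $\{|u|>M\}$. On $\{|u|>M\}$, H\"older gives $\||u|^{s-p}\|_{L^{\sigma/(\sigma-p)}(\{|u|>M\})}\,\|w_T\|^p_{L^\sigma}$. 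This factor is small for large $M$ because $u\in L^{s}\cap L^{\sigma}$, provided $(s-p)\sigma/(\sigma-p)\le\sigma$, i.e. $s\le \sigma$. I choose $M$ so that the term is absorbed on the left, at least at the first step. At later steps I use instead the Brezis–Kato style bound obtained from $\sigma>s$, which yields $\|u\|_{L^{\sigma\beta}}\le (C\beta)^{c/\beta}\|u\|_{L^{s\beta}}$ after letting $T\to\infty$ by Fatou.

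The iteration sets $\beta_{j+1}=\beta_j\,\sigma/s$ with $\beta_0=1$. Here $\|u\|_{L^{s\beta_0}}<\infty$ holds by Theorem \ref{theo1}. The products $\prod (C\beta_j)^{c/\beta_j}$ converge because $\beta_j$ grows geometrically, and this gives $u\in L^\infty(\mathds{R}^N_+)$.

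Case $(ii)$ is symmetric. Now $a\le 0$, so the interior source is favourable and is dropped; $u\in E_s(\mathds{R}^N_+)$ is used only for admissibility of the test function. The boundary term $b\int_{\mathds{R}^{N-1}}|u|^{q-p}|w_T|^p$ is controlled through the trace part of Theorem \ref{theo1}, $\|w_T\|_{L^{r}(\mathds{R}^{N-1})}^p\le C\|w_T\|^p$, with some $r\in(q,p_*)$ (or $r=p_*$ via Proposition \ref{embedhardy} when $q=p$, $\gamma>p-1$). The same iteration on $\mathds{R}^{N-1}$ with ratio $r/q$ gives $u\in L^\infty(\mathds{R}^{N-1})$.

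The main obstacle is the first step of each iteration: absorbing the nonlinear term when the available gain $\sigma/s$ (or $r/q$) is only slightly larger than $1$. This happens for $s$ close to $p^*$, or in the endpoint cases $s=p$ and $q=p$ where the gain must come from Proposition \ref{embedhardy}. I would first try the truncation-at-level-$M$ absorption argument described above, made uniform in $\beta$ by using the smallness of $\int_{\{|u|>M\}}|u|^{s}$. If that fails, I would replace it with a local iteration on balls and half-balls combined with the decay $\int_{\{x_N>R\}}|u|^s\to0$ coming from \eqref{1} with $k>0$.
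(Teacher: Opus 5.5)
Your proposal is correct and is essentially the paper's argument: a Moser iteration with truncated power test functions, discarding the favourably signed term, bounding the left side below by a Sobolev (resp.\ trace) inequality and the right side by H\"older. The paper uses the unweighted $p^*$ (resp.\ $p_*$) inequality with $|u|^{s-p}\in L^{p^*/(s-p)}$, while your bound $\|u\|_{L^{\sigma\beta}}\le (C\beta)^{c/\beta}\|u\|_{L^{s\beta}}$ follows directly from $\int|u|^{s-p}|w_T|^p\le\|u\|_{L^s}^{s-p}\|w_T\|_{L^s}^p$; since $s<p^*$ (resp.\ $q<p_*$) the gain ratio is a fixed number larger than $1$, so the iteration closes from $\beta_0=1$ with no absorption step, and the level-$M$ truncation and your fallback are only needed in the critical case, which the paper treats separately in Lemma \ref{Linfty 2}.
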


\begin{proof}
We only prove the case $p<N$, the case $p=N$ is similar taking some exponent in $ (s,\infty)$ instead of $p^*$. We begin by dealing with case $(i)$. Since $\gamma>0$, from Theorem \ref{inj2}, we have $E_q(\mathds{R}^{N-1})\hookrightarrow \mathcal{D}^{1,p}_\rho(\mathds{R}^N_+)\hookrightarrow L^s(\mathds{R}^N_+)$. 
Therefore, by a density argument, we can take $E_q(\mathds{R}^{N-1})$ as the set of testing functions in the variational formulation for a weak solution (given in definition \ref{weak solution}). Now, for $m\in \mathds{N}$, let $u_m=\min\{|u|,m\}$. Taking $\varphi=u_m^{pk+1}sgn(u)$ as a testing function in \eqref{varFor}, with $k>0$, 
the assumption $ b \leq 0 $, and the fact that $| u_m| \leq |u| $, we get
\begin{equation}\label{1a inequality}
   \int_{\mathds{R}^N_+} \rho(x_N)|\nabla u|^{p-2}\nabla u\nabla \varphi\mathrm{d}x\leq a\int_{\mathds{R}^{N}_+}|u|^{pk+s}\mathrm{d}x.
\end{equation}
Given that, from \eqref{GNS} and taking into account that $C\|v\|_{L^{p^\ast}(\mathds{R}^N_+)}^p\leq \|\nabla v\|_{L^p(\mathds{R}^N_+)}^p\leq \|v\|^p$, for all $v\in \mathcal{D}^{1,p}_\rho(\mathds{R}^N_+)$, we obtain
\begin{align*}
    \int_{\mathds{R}^N_+} \rho(x_N)|\nabla u|^{p-2} \nabla u\nabla \phi\mathrm{d}x=& \frac{pk+1}{(k+1)^p}\int_{\mathds{R}^{N_+}}\rho(x_N)|\nabla (u_m^{k+1}sgn(u))|^p\mathrm{d}x\\
    \geq& \,C \frac{pk+1}{(k+1)^p}\left(\int_{\mathds{R}^{N}_+}u_m^{(k+1)p^\ast}\mathrm{d}x\right)^{p/p^\ast},
\end{align*}
with $C$ depending on $N$ and $p$. Consequently, applying the inequality above in \eqref{1a inequality}, we have
\begin{equation*}
    C \frac{pk+1}{(k+1)^p}\left(\int_{\mathds{R}^{N}_+}u_m^{(k+1) p^\ast}\mathrm{d}x \right)^{p/p^\ast}\leq a\int_{\mathds{R}^N_+}|u|^{pk+s}\mathrm{d}x =a\int_{\mathds{R}^N_+}|u|^{p(k+1)}|u|^{s-p}\mathrm{d}x .
\end{equation*}
Let $\zeta \in [p,p^\ast)$ such that
$$
\frac{p}{\zeta}+\frac{s-p}{p^*}=1.
$$
Applying the Hölder's inequality, we obtain 
\begin{equation*}
   \left(\int_{\mathds{R}^N_+}u_m^{(k+1) p^\ast}\mathrm{d}x \right)^{1/ p^\ast}\leq C_1\frac{k+1}{(pk+1)^{1/p}} \left(\int_{\mathds{R}^N_+}|u|^{(k+1)\zeta}\mathrm{d}x \right)^{1/\zeta},
\end{equation*}
that is, denoting for simplicity $\|\cdot\|_{L^r(\mathds{R}^N_+)}=\|\cdot\|_r$, 
\begin{equation}\label{recursive}
    \|u_m\|_{(k+1) p^\ast}\leq C_1^{1/(k+1)}\left[\frac{k+1}{(pk+1)^{1/p}}\right]^{1/(k+1)}\|u\|_{(k+1)\zeta},\quad \forall\, k>0.
\end{equation}
Next, we choose $k=k_1>0$ such that $(k_1+1)\zeta=p^\ast$ and define
$$
(k_{n}+1)\zeta=(k_{n-1}+1)p^\ast, \quad n\geq 2.
$$
Note that
\begin{equation}\label{kn}
k_n=\left(\frac{p^\ast}{\zeta}\right)^n-1 \quad \text{and}\quad k_n \longrightarrow \infty, 
\end{equation} 
since $\zeta \in [p, p^\ast)$.
Now, observe that for $k_1$ we have
\begin{equation*}
     \|u_m\|_{(k_1+1) p^\ast}\leq C_1^{1/(k_1+1)}\left[\frac{k_1+1}{(pk_1+1)^{1/p}}\right]^{1/(k_1+1)}\|u\|_{p^\ast}<\infty.
\end{equation*}
Then, by Fatou's Lemma, we get
\begin{equation*}
     \|u\|_{(k_1+1) p^\ast}\leq C_1^{1/(k_1+1)}\left[\frac{k_1+1}{(pk_1+1)^{1/p}}\right]^{1/(k_1+1)}\|u\|_{p^\ast},
\end{equation*}
and by induction we have
\begin{equation*}
     \|u\|_{(k_n+1) p^\ast}\leq C_1^{\sum_{i=1}^n1/(k_i+1)}\prod_{i=1}^n\left[\frac{k_i+1}{(pk_i+1)^{1/p}}\right]^{1/(k_i+1)}\|u\|_{ p^\ast}.
\end{equation*}
Writing
\begin{equation*}
    \left[\frac{k_i+1}{(pk_i+1)^{1/p}}\right]^{1/(k_i+1)}=\left[\left(\frac{k_i+1}{(pk_i+1)^{1/p}}\right)^{1/\sqrt{k_i+1}}\right]^{1/\sqrt{k_i+1}},
\end{equation*}
and given that $\left(\frac{k+1}{(pk+1)^{1/p}}\right)^{1/\sqrt{k+1}}>1$ for $k>0$ and $\displaystyle\lim_{k\longrightarrow \infty}\left(\frac{k+1}{(pk+1)^{1/p}}\right)^{1/\sqrt{k+1}}=1$, there exists $C_2>1$ such that
\begin{equation*}
     \|u\|_{(k_n+1) p^\ast}\leq C_1^{\sum_{i=1}^n\frac{1} {k_i+1}}C_2^{\sum_{i=1}^n\frac{1}{\sqrt{k_i+1}}}\|u\|_{ p^\ast}.
\end{equation*}
Now, observe that by \eqref{kn} and the fact that $\zeta<p^\ast$, letting $n\longrightarrow \infty$, we obtain $\|u\|_{\infty}\leq C \|u\|_{ p^\ast}$ and hence $u \in L^\infty(\mathds{R}^N_+)$. 

For item $(ii)$, we can apply the same reasoning and obtain
\begin{equation*}
   \left(\int_{\mathds{R}^{N-1}}u_m^{(k+1) p_\ast}\mathrm{d}x^\prime \right)^{1/ p_\ast}\leq C\frac{k+1}{(pk+1)^{1/p}} \left(\int_{\mathds{R}^N_+}|u|^{(k+1)\eta}\mathrm{d}x \right)^{1/\eta},
\end{equation*}
with $\eta \in [p,p_\ast)$ such that $p/\eta+
(q-p)/p_\ast=1.$ Defining $k_n=\left(p_\ast/\eta\right)^n-1$ we can reproduce the interaction argument and obtain $\|u\|_{L^\infty(\mathds{R}^{N-1})}\leq C \|u\|_{L^{p_\ast}(\mathds{R}^{N-1})}$. The case $p = N$ follows by a similar argument, replacing $p_\ast$ with any exponent in $(q, \infty)$. This completes the proof of item $(ii)$ and hence the result.
\end{proof}

\begin{lemma}\label{Linfty 2}
Assume condition $(\rho_0)$ holds with $\gamma\geq 0$. Then, if $u$ is a 
weak solution for \eqref{PG}, in each of the following cases:
\begin{itemize}
\item[$(i)$] If $u \in E_q(\mathds{R}^{N-1})$, with $a > 0$, $b \leq 0$, $s=p^\ast$ and $q\in(1,\infty)$, then $u \in L^\infty(\mathds{R}^N_+)$;
\item[$(ii)$] If $u \in E_s(\mathds{R}^N_+)$, with $a \leq 0$, $b > 0$, $s\in(1,\infty)$ and $q=p_\ast$, then $u \in L^\infty(\mathds{R}^{N-1})$.
\end{itemize}
\end{lemma}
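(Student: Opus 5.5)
The plan is a Moser iteration in the critical setting, run as a Brezis--Kato type argument. The Hölder step of Lemma \ref{Linfty} fails here: with $s=p^\ast$ we get $\zeta=p^\ast$, so the gain $p^\ast/\zeta$ equals $1$. I will therefore first show that $u\in L^{r}(\mathds{R}^N_+)$ for some $r>p^\ast$, and then restart the iteration of Lemma \ref{Linfty}. Since $\gamma\ge0$ and $\rho\ge C_0(1+x_N)^\gamma\ge C_0$, the classical inequalities \eqref{GNS} and \eqref{Trace-inequality} give $\mathcal{D}^{1,p}_\rho(\mathds{R}^N_+)\hookrightarrow L^{p^\ast}(\mathds{R}^N_+)$ and $L^{p_\ast}(\mathds{R}^{N-1})$ with constants independent of $\gamma$. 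By density, every function of the form $\varphi=u\,u_m^{pk}$, with $u_m=\min\{|u|,m\}$, is admissible in \eqref{varFor}. In case $(i)$ this uses $u\in E_q(\mathds{R}^{N-1})$, and in case $(ii)$ it uses $u\in E_s(\mathds{R}^N_+)$.

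For case $(i)$, test with $\varphi=u\,u_m^{pk}$. The boundary term $b\int|u|^q u_m^{pk}\le0$ can be dropped. The left side bounds $c_k\int\rho|\nabla(u\,u_m^{k})|^p$ from below, with $c_k\sim(k+1)^{-p}$ up to harmless factors. Applying \eqref{GNS} to $w=u\,u_m^{k}$ gives
\[
c_k\,\|w\|_{p^\ast}^p\le a\int_{\mathds{R}^N_+}|u|^{p^\ast-p}\,|w|^p\,\mathrm{d}x .
\]
Split the right-hand side over $\{|u|\le M\}$ and $\{|u|>M\}$. The first piece is bounded by $M^{p^\ast-p}\int|u|^{p(k+1)}$, which is finite as long as $p(k+1)\le p^\ast$. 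For the second piece, Hölder gives the bound $\big(\int_{\{|u|>M\}}|u|^{p^\ast}\big)^{p/N}\|w\|_{p^\ast}^p$. Choose $M$ so large that $a\big(\int_{\{|u|>M\}}|u|^{p^\ast}\big)^{p/N}\le c_k/2$, which is possible because $u\in L^{p^\ast}$. This piece can then be absorbed into the left side. Let $m\to\infty$ by Fatou and take $k+1=p^\ast/p$. This yields $u\in L^{(p^\ast)^2/p}(\mathds{R}^N_+)$, with $(p^\ast)^2/p>p^\ast$.

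Once $u\in L^{r}$ with $r>p^\ast$, the proof of Lemma \ref{Linfty} applies verbatim. The right side becomes $\int|u|^{p(k+1)}|u|^{p^\ast-p}$, and we apply Hölder with an exponent pair built from $r$. This gives a genuine gain factor $>1$ at every step, and the constants multiply to a finite product exactly as before, so $u\in L^\infty(\mathds{R}^N_+)$. A cleaner alternative is to redo the iteration with the interpolation between $L^{p^\ast}$ and $L^r$ absorbed into the exponents.

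Case $(ii)$ is symmetric. Here $a\int|u|^{s}u_m^{pk}\le0$ is dropped, and we use the trace inequality \eqref{Trace-inequality} on $w=u\,u_m^{k}$ to obtain
\[
c_k\,\|w\|_{L^{p_\ast}(\mathds{R}^{N-1})}^p\le b\int_{\mathds{R}^{N-1}}|u|^{p_\ast-p}\,|w|^p\,\mathrm{d}x' .
\]
The same truncation at level $M$ uses $u\in L^{p_\ast}(\mathds{R}^{N-1})$ and the Hölder exponent $(p_\ast-p)/p_\ast=p/(N-1)$. It gives $u\in L^{r}(\mathds{R}^{N-1})$ for some $r>p_\ast$, and the boundary iteration of Lemma \ref{Linfty}$(ii)$ then gives $u\in L^\infty(\mathds{R}^{N-1})$. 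The main obstacle is this first absorption step. The choice of $M$ depends on $k$, so only finitely many steps can be done this way, which is why the bootstrap is split into two stages. We must also check that the truncated test functions are legitimate and that the first piece stays finite, which is why we restrict to $p(k+1)\le p^\ast$ (respectively $p_\ast$).
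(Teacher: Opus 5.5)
Your proposal is correct and is essentially the paper's argument. The paper also first absorbs the part where $|u|$ is large, at the endpoint $p(d_1+1)=p^\ast$ (resp. $p_\ast$), to get $u\in L^{(p^\ast)^2/p}$, and then iterates. It chains exponents through $p(d_{n+1}+1)+p^\ast-p=(d_n+1)p^\ast$ rather than using Hölder against a fixed $L^r$ norm, and it truncates with $\min\{|u|^{pd+1},k^p|u|\}$ instead of your $u\,u_m^{pk}$; both differences are cosmetic.

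There are two harmless slips. On the boundary the Hölder exponent is $(p_\ast-p)/p_\ast=(p-1)/(N-1)$, not $p/(N-1)$. Also, the piece on $\{|u|\le M\}$ is only known to be finite at the endpoint $p(k+1)=p^\ast$ (resp. $p_\ast$), not for all $p(k+1)\le p^\ast$, since the domain has infinite measure; but that endpoint is exactly the choice you make.
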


\begin{proof} For $\gamma \geq 0$, observe that, by \eqref{GNS}, \eqref{Trace-inequality}, and condition $(\rho_0)$, we have the continuous embeddings
\begin{equation*}
E_q(\mathds{R}^{N-1}) \hookrightarrow L^{p^\ast}(\mathds{R}^N_+)
\quad \text{and} \quad
E_s(\mathds{R}^N_+) \hookrightarrow L^{p_\ast}(\mathds{R}^{N-1}).
\end{equation*}
Consequently, in each case, the weak formulation can be extended to testing functions in
$E_q(\mathds{R}^{N-1})$ and $E_s(\mathds{R}^N_+)$, respectively.
Let $u$ be a 
weak solution for \eqref{PG}, $d>0$, $k>1$ and let us denote $\varphi_{k,d} = \min \{ |u|^{pd+1}, k^p |u|\}sgn(u)$ and $\psi_{k,d} = \min \{ |u|^{d+1}, k |u|\}sgn(u)$. We infer that
\begin{equation*}
    \nabla\varphi_{k,d}= \left\lbrace
		\begin{aligned}
			&(pd+1)|u|^{pd}\nabla u\quad&\mbox{for }&\,|u|^d\leq k,\\
		&k^p\nabla u,\quad &\mbox{for}&\,|u|^d\geq k
		\end{aligned}
		\right.
        \quad \text{and}\quad 
   \nabla \psi_{k,d}=\left\lbrace
		\begin{aligned}
			&(d+1)|u|^d \nabla u\quad&\mbox{for }&\,|u|^d\leq k,\\
		&k\nabla u,\quad &\mbox{for}&\,|u|^d\geq k.
		\end{aligned}
		\right.
\end{equation*}
Now, observe that
\begin{align*}
    \int_{\mathds{R}^N_+}\rho(x_N)|\nabla u|^{p-2}\nabla u \nabla \varphi_{k,d} \mathrm{d}x=& (pd+1)\int_{\{|u|^d\leq k\}}\rho(x_N) |u|^{pd}|\nabla u|^p\mathrm{d}x+k^p\int_{\{|u|^d\geq k\}}\rho(x_N)|\nabla u|^p\mathrm{d}x\\ 
    =&\frac{pd+1}{(d+1)^p}\int_{\{|u|^d\leq k\}}\rho(x_N)(d+1)^p |u|^{pd}|\nabla u|^p\mathrm{d}x+k^p\int_{\{|u|^d\geq k\}}\rho(x_N)|\nabla u|^p\mathrm{d}x\\
    \geq& \frac{pd+1}{(d+1)^p}\int_{\mathds{R}^N_+}\rho(x_N)|\nabla \psi_{k,d}|^p \mathrm{d}x,
\end{align*}
where we have used that $pd+1\leq (d+1)^p$. Then, testing the weak formulation \eqref{varFor} with $\varphi_{k,d}$, we obtain
\begin{equation}\label{psi grad ineq}
    \int_{\mathds{R}^N_+}\rho(x_N)|\nabla \psi_{k,d}|^p\mathrm{d}x\leq \frac{(d+1)^p}{dp+1}\left(a\int_{\mathds{R}^N_+} |u|^{s-2}u|\varphi_{k,d}| \mathrm{d}x+b\int_{\mathds{R}^{N-1}}|u|^{q-2}u|\varphi_{k,d}| \mathrm{d}x^\prime\right).
\end{equation}

Consider now the item $(i)$. Since $a>0\geq b$, $\rho\geq 1$ by $\rho_0$ and \eqref{GNS} holds for $\psi_{k,d}$, we obtain
\begin{align}\label{key inequality 1}
\nonumber \left(\int_{\mathds{R}^N_+}|\psi_{k,d}|^{p^\ast}\mathrm{d}x\right)^{p/p^*}\leq& C_0\frac{(d+1)^p}{dp+1}\int_{\mathds{R}^N_+}|u|^{p^\ast-2}u|\varphi_{k,d} |\mathrm{d}x\\
=& C_0\frac{(d+1)^p}{dp+1}\int_{\mathds{R}^N_+}|\psi_{k,d}|^p |u|^{p^\ast-p} \mathrm{d}x,
\end{align}
with $C_0>0$ depending on $N,p$ and $a>0$.

\textbf{Claim:} Let $d_1>0$ such that $p(d_1+1)=p^\ast$, then $u \in L^{(d_1+1)p^\ast}(\mathds{R}^N_+)$.

In fact, let $R>0$ to be chosen later. By Hölder's inequality and \eqref{key inequality 1} for $d=d_1$, we have
\begin{align*}
    \int_{\mathds{R}^N_+}|\psi_{k,d}|^p |u|^{p^\ast-p}\mathrm{d}x=& \int_{\{|u|<R\}}|\psi_{k,d}|^p |u|^{p^\ast-p}\mathrm{d}x+ \int_{\{|u|>R\}}|\psi_{k,d}|^p |u|^{p^\ast-p}\mathrm{d}x\\
    \leq& R^{p^\ast-p}\int_{\mathds{R}^N_+}|\psi_{k,d}|^p \mathrm{d}x+\left(\int_{\mathds{R}^N_+} |\psi_{k,d}|^{p^\ast}\mathrm{d}x\right)^{p/p^\ast}\left(\int_{\{|u|>R\}} |u|^{p^\ast}\mathrm{d}x\right)^{(p^\ast-p)/p^\ast}.
\end{align*}
Now, choose $R>0$ such that $\|u\|_{p^\ast,\{|u|>R\}}^{p^\ast-p}\leq (d_1p+1)/2C_0(d_1+1)^p$, then, by the inequality above and \eqref{key inequality 1}, we get
\begin{align*}
\left(\int_{\mathds{R}^N_+}|\psi_{k,d}|^{p^\ast}\mathrm{d}x\right)^{p/p^*}\leq C_0\frac{(d_1+1)^p}{d_1p+1}\left[ R^{p^\ast-p}\int_{\mathds{R}^N_+}|\psi_{k,d}|^p \mathrm{d}x+\frac{d_1p+1}{2C_0(d_1+1)^p}\left(\int_{\mathds{R}^N_+} |\psi_{k,d}|^{p^\ast}\mathrm{d}x\right)^{p/p^\ast}\right].
\end{align*}
Thus, we have
\begin{align*}
\left(\int_{\mathds{R}^N_+}|\psi_{k,d}|^{p^\ast}\mathrm{d}x\right)^{p/p^*}\leq& C_0\frac{(d_1+1)^p}{d_1p+1} R^{p^\ast-p}\int_{\mathds{R}^N_+}|\psi_{k,d}|^p \mathrm{d}x\\
\leq& C_0\frac{(d_1+1)^p}{d_1p+1} R^{p^\ast-p}\int_{\mathds{R}^N_+}|u|^{(d_1+1)p} \mathrm{d}x,
\end{align*}
with the right hand side finite, given that $(d_1+1)p=p^\ast$. Since $\psi_{k,d} \longrightarrow |u|^{d}u$ $a.e.$ in $\mathds{R}^N_+$ for all $d>0$ when $k \longrightarrow \infty$, by Fatou's Lemma we obtain
\begin{equation*}
\left(\int_{\mathds{R}^N_+}|u|^{(d_1+1)p^\ast}\mathrm{d}x\right)^{p/p^*} \leq C_0\frac{(d_1+1)^p}{d_1p+1} R^{p^\ast-p}\int_{\mathds{R}^N_+}|u|^{p^\ast} \mathrm{d}x<\infty,
\end{equation*}
which concludes the claim.

Now, by \eqref{key inequality 1}, we obtain
\begin{equation}\label{key inequality 2}
\left(\int_{\mathds{R}^N_+}|\psi_{k,d}|^{p^\ast}\mathrm{d}x\right)^{1/p^\ast d}\leq C_1^{1/d}\left[\frac{d+1}{(dp+1)^{1/p}}\right]^{1/d}\left(\int_{\mathds{R}^N_+} |u|^{p(d+1)+p^\ast-p} \mathrm{d}x\right)^{1/pd}, \quad \forall d>0,
\end{equation}
since $|\psi_{k,d}|\leq |u|^{d+1}$. Now, define $d_{n+1}=(p^\ast/p)^n d_1$, for $n \geq 1$ and observe that
\begin{equation}\label{dn}
    p(d_{n+1}+1)+p^\ast-p=(d_n+1)p^\ast\quad \text{and}\quad pd_{n+1}=p^\ast d_n.
\end{equation}
For $n=1$, by \eqref{key inequality 2} we have
\begin{align*}
\left(\int_{\mathds{R}^N_+}|\psi_{k,d_2}|^{p^\ast}\mathrm{d}x\right)^{1/p^\ast d_2}\leq&  C_1^{1/d_2}\left[\frac{d_2+1}{(d_2 p+1)^{1/p}}\right]^{1/d_2}\left(\int_{\mathds{R}^N_+} |u|^{p(d_2+1)+p^\ast-p} \mathrm{d}x\right)^{1/pd_2}\\
=& C_1^{1/d_2}\left[\frac{d_2+1}{(d_2 p+1)^{1/p}}\right]^{1/d_2}\left(\int_{\mathds{R}^N_+} |u|^{(d_1+1)p^\ast} \mathrm{d}x\right)^{1/p^\ast d_1}.
\end{align*}
Thus, by Fatou's Lemma, we have
\begin{equation*}
\left(\int_{\mathds{R}^N_+}|u|^{(d_2+1)p^\ast}\mathrm{d}x\right)^{1/p^\ast d_2}\leq C_1^{1/d_2}\left[\frac{d_2+1}{(d_2 p+1)^{1/p}}\right]^{1/d_2}\left(\int_{\mathds{R}^N_+} |u|^{(d_1+1)p^\ast} \mathrm{d}x\right)^{1/p^\ast d_1},
\end{equation*}
and using \eqref{key inequality 2} and \eqref{dn}, by induction, we obtain
\begin{equation*}
\left(\int_{\mathds{R}^N_+}|u|^{(d_{n+1}+1)p^\ast}\mathrm{d}x\right)^{1/p^\ast d_{n+1}}\leq C_1^{\sum_{i=2}^{n+1}{d_i^{-1}}}\prod_{i=2}^{n+1}\left[\frac{d_i+1}{(d_i p+1)^{1/p}}\right]^{1/d_i}\left(\int_{\mathds{R}^N_+} |u|^{(d_1+1)p^\ast} \mathrm{d}x\right)^{1/p^\ast d_1},\quad n\geq 1.
\end{equation*}
Arguing similarly to Lemma \ref{Linfty}, letting $n\longrightarrow \infty$, we have $u \in L^\infty(\mathds{R}^N_+)$ with
\begin{equation*}
    \|u\|_{L^\infty(\mathds{R}^N_+)}\leq C \|u\|_{L^{(d_1+1)p^\ast}(\mathds{R}^N_+)}^{(d_1+1)/d_1}.
\end{equation*}
Now, consider item (ii). Similarly to the previous case, from \eqref{psi grad ineq} and \eqref{Trace-inequality}, we have
\begin{equation*} \left(\int_{\mathds{R}^{N-1}}|\psi_{k,d}|^{p_\ast}\mathrm{d}x\right)^{p/p_\ast}\leq K_0\frac{(d+1)^p}{dp+1}\int_{\mathds{R}^{N-1}}|\psi_{k,d}|^p |u|^{p_\ast-p} \mathrm{d}x,
\end{equation*}
with $K_0>0$ depending on $N,p$ and $b>0$. Thus, following the same reasoning of the first case with 
\begin{equation*}
    (d_1+1)p=p_\ast\quad \text{and} \quad d_{n+1}=\left(\frac{p_\ast}{p}\right)^n d_1,\quad \text{for $n\geq 1$},
\end{equation*}
we obtain $u \in L^\infty(\mathds{R}^{N-1})$, with 
\begin{equation*}
    \|u\|_{L^\infty(\mathds{R}^{N-1})}\leq C \|u\|_{L^{(d_1+1)p_\ast}(\mathds{R}^{N-1})}^{(d_1+1)/d_1}<\infty.
\end{equation*}
\end{proof}
We are now ready to prove the Hölder regularity result.

\begin{proof}[Proof of Theorem \ref{C1alpha regularity}] Suppose that item $(i)$ holds. By Lemma \ref{Linfty} we have $u \in L^\infty(\mathds{R}^N_+)$. Observe that the equation \eqref{PG} with $\rho \in C^{0,\beta}_{\mathrm{loc}}[0,\infty)$ satisfies the structural conditions of regularity results in \cite{Lieberman}. Then, the conclusion follows by \cite[Theorem 2]{Lieberman}. Conclusion in case of item $(ii)$ follows similarly by applying Lemma \ref{Linfty 2}.

For item $(iii)$, by Lemma \ref{Linfty} we obtain $u \in L^\infty(\mathds{R}^{N-1})$. Let $k$ be such that $\|u\|_{L^\infty(\mathds{R}^{N-1})}<k$ and consider $T_ku=(u-k)_+$ as a testing function, we obtain
\begin{equation*}
    0\leq \int_{\Omega_k}\rho(x_N)|\nabla u|^p\mathrm{d}x=a\int_{\mathds{R}^N_+}u^{p-1}T_ku\mathrm{d}x+b\int_{\mathds{R}^{N-1}}u^{q-1}T_ku\mathrm{d}x^\prime\leq 0,
\end{equation*}
where $\Omega_k=\{x\in \mathds{R}^N_+, u(x)>k\}$. Hence, either $u$ is constant or $u\leq k$ a.e. in $\mathds{R}^N_+$. Testing instead with $(-u - k)_+$, we deduce that either $u$ is constant or $u \geq -k$. Combining this with the previous case, we conclude that $-k\leq u\leq k$, which implies $u \in L^\infty(\mathds{R}^N_+)$. Therefore, by \cite[Theorem 2]{Lieberman}, we get the validity of conclusion in case of item $(iii)$ and with the same reasoning we prove it in case of item $(iv)$, concluding the result. Using Green's formula (see \cite{Ilyasov-Takac}) we have that $u$ verifies the boundary condition pointwisely.
\end{proof}


Now we focus on the proof of the $W^{2,r}_{\mathrm{loc}}$ regularity for weak solutions of \eqref{PG}. First of all, we adopt the following notations: Consider the vector field $F= (F_1,\cdots, F_N ):\mathds{R}^N \longrightarrow \mathds{R}^N$ given by 
\[
F(\eta) = |\eta|^{p-2}\eta,
\]
with $\eta=(\eta_1,\cdots, \eta_N)\in \mathds{R}^N$ and the entries $F_i(\eta)=|\eta|^{p-2}\eta_i$, for $i=1,\cdots,N$. The Jacobian matrix of $F$ is given by 
\begin{equation}\label{Jacobian of F}
F^\prime(\eta) = |\eta|^{p-2} \left( I + (p-2)\frac{\eta \otimes \eta}{|\eta|^2} \right), \quad \text{for } \eta \in \mathds{R}^N \setminus \{0\}
\end{equation}
where $\xi \otimes \eta = (\xi_i \eta_j)_{i,j=1}^N$, $I$ denote the identity matrix $N \times N$, and one has $ \partial F_i/\partial \eta_j \in C^0(\mathds{R}^N \setminus \{0\})$. Furthermore, it is well known that there exists $\lambda, \Lambda>0$ such that

\begin{equation}\label{partial Fi}
\sum_{i,j=1}^N 
\left|\frac{\partial F_i}{\partial \eta_j}(\eta)\right| \leq 
\Lambda |\eta|^{p-2}
\end{equation}
and
\begin{equation}\label{Ellipticity}
\lambda |\eta|^{p-2}|\xi|^2 
\leq
\langle F^\prime(\eta)\xi, \xi \rangle
= 
\sum_{i,j=1}^N \frac{\partial F_i}{\partial \eta_j}(\eta) \, \xi_i \xi_j
\leq
\Lambda |\eta|^{p-2}|\xi|^2,
\end{equation}
for all $ \eta \in \mathds{R}^N \setminus \{0\} $ and all $ \xi \in \mathds{R}^N $, where 
$\langle \cdot, \cdot \rangle$ denotes the standard Euclidean inner product of $\mathds{R}^N$.

Now we introduce a cut-off function inspired by the construction in \cite{Ilyasov-Takac}. Let $\Omega \subset \mathds{R}^N$ an open domain and let
\[
d(x) = \operatorname{dist}(x,\partial\Omega)
\]
denote the distance from a point $x \in \Omega$ to the boundary $\partial\Omega$.  
For $\delta > 0$ sufficiently small, set
\[
\Omega_\delta := \{\, x \in \Omega : d(x) < \delta \,\}.
\] 
Let $\varphi \in C^{1}_{0}(\Omega)$ be a nonnegative function whose support is contained in the region
\[
\Omega_\delta^\prime := \Omega \setminus \Omega_\delta 
   = \{\, x \in \Omega : d(x) > \delta \,\}.
\]  
Then, for every vector $h \in \mathds{R}^N$ with $0 < |h| < \delta$, the difference quotient
\[
D_h (\varphi(x) )= \frac{\varphi(x+h)-\varphi(x)}{|h|}, 
   \qquad x \in \Omega,
\]
belongs to $C^{1}_{0}(\Omega)$. Assuming $\delta > 0$ sufficiently small, define
\[
d_\delta(x) :=
\begin{cases}
0, & x \in \Omega_\delta,\\[4pt]
\operatorname{dist}(x,\Omega_\delta), & x \in \Omega \setminus \Omega_\delta,
\end{cases}
\]
and, for any $\sigma \in (0,\delta)$, consider 
\begin{equation*}
\mathcal{O}^{\delta}_{\sigma}:= \{\, x \in \Omega_\delta^\prime: d_\delta(x) < \sigma \,\}.
\end{equation*}
This set lies in a neighborhood of $\partial\Omega$ and satisfies 
$\overline{\mathcal{O}^{\delta}_{\sigma}} \subset \Omega$.

We define
\[
\varphi^{\delta}_{\sigma}(x) =
\begin{cases}
0, & x \in \overline{\Omega_\delta},\\
\left(\sigma^{-1}d_\delta(x)\right)^{2}, & x \in \mathcal{O}^{\delta}_{\sigma},\\
1, & x \in \Omega \setminus (\overline{\Omega_\delta} \cup \mathcal{O}^{\delta}_{\sigma}),
\end{cases}
\]
which satisfies $0 \le \varphi^{\delta}_{\sigma} \le 1$ in $\Omega$ and  
$\varphi^{\delta}_{\sigma} \in W^{1,r}_{0}(\Omega)$ for every $r > N$.

Moreover,
\[
\nabla \varphi^{\delta}_{\sigma}(x) =
\begin{cases}
2\sigma^{-2}d_\delta(x) \nabla d_\delta(x), 
   & x \in \mathcal{O}^{\delta}_{\sigma},\\
0, & x \in \Omega \setminus \overline{\mathcal{O}^{\delta}_{\sigma}},
\end{cases}
\]
as established in Gilbarg–Trudinger \cite[Theorem~7.8, p.~153]{GT}.  
Consequently, there exists a constant $C>0$, independent of  
$0 < \sigma < \delta$, such that
\begin{equation}\label{grad varphi ineq}
|\nabla \varphi^{\delta}_{\sigma}(x)|^{2}
   \le C\, \sigma^{-2}\, \varphi^{\delta}_{\sigma}(x)
   \qquad \text{for all } x \in \Omega \setminus \partial\mathcal{O}^{\delta}_{\sigma}.
\end{equation}
Now, we can prove the Sobolev regularity given in Theorem \ref{W2regularity}:

\begin{proof}[Proof of Theorem \ref{W2regularity}] Let $1<p\leq 2$. In each of cases $(i)-(iv)$, we apply the regularity results of
\cite[Theorem, p.~840]{DeThelin} 
to obtain $u \in W^{2,p}_{\mathrm{loc}}(\mathds{R}^N_+)$, concluding the item $(1)$.

Now consider $p>2$ and $A:\mathds{R}_+\times \mathds{R}^N\longrightarrow \mathds{R}^N$ given by $A(t,\eta)=\rho(t)F(\eta)=\rho(t)|\eta|^{p-2}\eta$. The Jacobian matrix of $A$ is the $N \times N+1$ matrix given by
\[
A^\prime(t,\eta)=
\bigl[\, \rho'(t)F(\eta) \;\big|\; \rho(t)F'(\eta) \,\bigr],
\]
and for all $(s,\xi) \in \mathds{R}\times \mathds{R}^N$, 
\begin{equation}\label{Derivative of A(t,eta)}
A^\prime(t,\eta)\cdot (s,\xi)= s\rho^\prime(t)F(\eta)+\rho(t)F^\prime(\eta)\cdot \xi \in \mathds{R}^N.
\end{equation}
It is sufficient to prove that $u \in W^{2,2}_{\mathrm{loc}}(\Omega_\delta^\prime\cap U) $ for any bounded open set $\Omega \subset \mathds{R}^N_+$. Let $\varphi \in C^1_0(\mathds{R}^N)$ be such that $ \mathrm{supp}(\varphi) \subset \Omega_\delta^\prime$. Then, for $h=(h_1,\cdots, h_N) \in \mathds{R}^N$ such that $0<|h|<\delta$, we can test the weak formulation \eqref{varFor} with $D_h(\varphi) \in C^1_0(\Omega)$. Consequently,
\begin{equation*}    \int_{\mathds{R}^N_+}A(x_N, \nabla u)\cdot \nabla(D_h(\varphi))\mathrm{d}x = a\int_{\mathds{R}^N_+}|u|^{s-2}uD_h(\varphi) \mathrm{d}x. 
\end{equation*}
After a change of variables and subsequently replacing $h$ by $-h$, we obtain
\begin{equation}\label{1 step}
    \int_{\Omega_\delta^\prime}\frac{A(x_N+h_N, \nabla u(x+h))-A(x_N, \nabla u(x))}{|h|}\cdot \nabla \varphi \mathrm{d}x= a \int_{\Omega_\delta^\prime} D_h(|u|^{s-2}u)\varphi\mathrm{d}x.
\end{equation}
Now, given that $\rho \in W^{1,\infty}_{\mathrm{loc}}(0,\infty)$, we can apply Taylor formula to get
\begin{equation}\label{Taylor formula}
A(t_2,\eta_2)-A(t_1,\eta_1)=\int_0^1 A^\prime((t_1,\eta_1)+ z(t_2-t_1, \eta_2-\eta_1))\cdot (t_2-t_1, \eta_2-\eta_1)\mathrm{d}z \in \mathds{R}^N.    
\end{equation}

Setting $t_1=x_N$, $t_2=x_N+h_N$, $\eta_1= \nabla u(x)$ and $\eta_2 = \nabla u(x+h)$ in \eqref{Taylor formula}, we deduce
\begin{align*}
    \frac{A(x_N+h_N, \nabla u(x+h))-A(x_N, \nabla u(x))}{|h|}=& \int_0^1 A^\prime (x_N+zh_N, (1-z)\nabla u(x)+z\nabla u(x+h))\cdot \left(\frac{h_N}{|h|}, D_h(\nabla u)\right)\mathrm{d}z.
\end{align*}
Thus, we obtain
\begin{equation}\label{Key ineq varphi}
  \int_{\Omega_\delta^\prime}\int_0^1 A^\prime (x_N+zh_N, (1-z)\nabla u(x)+z\nabla u(x+h))\cdot \left(\frac{h_N}{|h|}, D_h(\nabla u)\right)\cdot \nabla \varphi \mathrm{d}z\mathrm{d}x = a\int_{\Omega_\delta^\prime} D_h(|u|^{s-2}u)\varphi \mathrm{d}x,\quad \forall\varphi \in C^1_0(\Omega_\delta^\prime). 
\end{equation}
By Theorem \ref{C1alpha regularity}, $u \in C^{1,\alpha}(\overline{\Omega})$, then we can replace $\varphi$ by $(D_hu) \varphi$ in \eqref{Key ineq varphi} and get
\begin{align*}
   & \int_{\Omega_\delta^\prime}\int_0^1 A^\prime (x_N+zh_N, (1-z)\nabla u(x)+s\nabla u(x+h))\cdot \left(\frac{h_N}{|h|}, D_h(\nabla u)\right)\cdot D_h(\nabla u) \varphi \mathrm{d}z\mathrm{d}x\\
    & +\int_{\Omega_\delta^\prime}\int_0^1 A^\prime (x_N+zh_N, (1-z)\nabla u(x)+z\nabla u(x+h))\cdot \left(\frac{h_N}{|h|}, D_h(\nabla u)\right)\cdot \nabla \varphi D_h(u) \mathrm{d}z\mathrm{d}x \\
    & =a\int_{\Omega_\delta^\prime} D_h(|u|^{s-2}u)D_h(u) \varphi \mathrm{d}x.
\end{align*}
Thus, replacing $\varphi$ by $\varphi_\sigma^\delta$ using \eqref{Derivative of A(t,eta)}, we obtain
\begin{align*}\label{key inequality }
    \int_{\Omega_\delta^\prime}\int_0^1 \rho^\prime(x_N+sh_N)\frac{h_N}{|h|}\langle F((1-z)\nabla u(x)+z\nabla u(x+h)), D_h(\nabla u) \rangle \varphi_\sigma^\delta \mathrm{d}z\mathrm{d}x\\
    +\int_{\Omega_\delta^\prime}\int_0^1 \rho(x_N)\langle F^\prime((1-z)\nabla u(x)+z\nabla u(x+h))D_h(\nabla u),D_h(\nabla u)\rangle \varphi_\sigma^\delta \mathrm{d}z\mathrm{d}x\\
   +\int_{\Omega_\delta^\prime}\int_0^1 \rho^\prime(x_N+sh_N)\frac{h_N}{|h|}\langle F((1-z)\nabla u(x)+z\nabla u(x+h)), \nabla \varphi_\sigma^\delta \rangle D_h(u)\mathrm{d}z\mathrm{d}x\\
    +\int_{\Omega_\delta^\prime}\int_0^1\rho(x_N)\langle F^\prime((1-z)\nabla u(x)+z\nabla u(x+h))\cdot D_h(\nabla u), \nabla \varphi_\sigma^\delta \rangle D_h(u) \mathrm{d}z\mathrm{d}x\\
     = a\int_{\Omega_\delta^\prime} D_h(|u|^{s-2}u)D_h(u) \varphi_\sigma^\delta \mathrm{d}x.
\end{align*}
Rewriting the equation above as
\begin{equation}\label{I1...I4}
    I_1+I_2+I_3+I_4 = a\int_{\Omega_\delta^\prime} D_h(|u|^{s-2}u)D_h(u) \varphi_\sigma^\delta \mathrm{d}x,
\end{equation}
note firstly that from  $(\rho_0)$ and  \eqref{Ellipticity}, we obtain 
\begin{align}\label{I2 ineq}
    \nonumber I_2\geq&  C_0\int_{\Omega_\delta^\prime}\int_0^1 \langle F^\prime((1-z)\nabla u(x)+z\nabla u(x+h))D_h(\nabla u),D_h(\nabla u)\rangle \varphi_\sigma^\delta\mathrm{d}z\mathrm{d}x\\
   \nonumber \geq&C_0\lambda  \int_{\Omega_\delta^\prime}\int_0^1|(1-z)\nabla u(x)+z\nabla u(x+h)|^{p-2}\mathrm{d}z|D_h(\nabla u)|^2 \varphi_\sigma^\delta\mathrm{d}x\\
    =&C_0\lambda \int_{\Omega_\delta^\prime}\tilde{a}(x,h)|D_h(\nabla u)|^2\varphi_\sigma^\delta \mathrm{d}x,
\end{align}
where 
\begin{equation*}
    \tilde{a}(x,h):= \int_0^1|(1-z)\nabla u(x)+z\nabla u(x+h)|^{p-2}\mathrm{d}z.
\end{equation*}
Now, since $\rho \in W^{1,\infty}_{\mathrm{loc}}(0,\infty)$ and $h_N/|h|\leq 1$, for $I_1$ we have
\begin{align*}
    |I_1|\leq& \|\rho^\prime\|_{L^\infty(\Omega)} \int_{\Omega_\delta^\prime}\int_0^1|(1-z)\nabla u(x)+z\nabla u(x+h)|^{p-2}|\langle (1-z)\nabla u(x)+z\nabla u(x+h),D_h(\nabla u)\rangle|\varphi_\sigma^\delta\mathrm{d}z\mathrm{d}x\\
    \leq& \|\rho^\prime\|_{L^\infty(\Omega)}\|\nabla u\|_{L^\infty(\Omega)} \int_{\Omega_\delta^\prime}\tilde{a}(x,h)|D_h(\nabla u)|\varphi_\sigma^\delta \mathrm{d}x.    
\end{align*}
Applying the Young inequality
\begin{equation}\label{Young}
    |cd|\leq \varepsilon c^2+\frac{d^2}{4\varepsilon}, \quad \forall \varepsilon >0.
\end{equation}
with $\varepsilon=\lambda/(4\|\rho^\prime\|_{L^\infty(\Omega)}\|\nabla u\|_{L^\infty(\Omega)})$, we obtain
\begin{equation}\label{I1 ineq}
    |I_1|\leq \frac{\lambda}{4}\int_{\Omega_\delta^\prime}\tilde{a}(x,h)|D_h(\nabla u)|^2\varphi_\sigma^\delta \mathrm{d}x+C_1\int_{\Omega_\delta^\prime} \tilde{a}(x,h)\varphi_\sigma^\delta \mathrm{d}x,
\end{equation}
where $C_1$ depends on $\Omega$, $\rho^\prime$, $\nabla u$ and $\lambda$. Similarly, we infer
\begin{equation}\label{I3 ineq}
    |I_3|\leq \|\rho^\prime\|_{L^\infty(\Omega)}\|\nabla u\|_{L^\infty(\Omega)}\int_{\Omega_\delta^\prime}\tilde{a}(x,h)|\nabla \varphi_\sigma^\delta||D_h(u)|\mathrm{d}x\leq C_2 \int_{\Omega_\delta^\prime}\tilde{a}(x,h)\mathrm{d}x,
\end{equation}
where $C_2$ depends on $\Omega$, $\rho^\prime$, and $\nabla u$. Estimating $I_4$ by using Cauchy-Schwarz inequality, \eqref{partial Fi}, \eqref{grad varphi ineq} and Hölder's inequality, we obtain  
\begin{align*}
    |I_4|\leq& \|\rho\|_{L^\infty(\Omega)}\int_{\Omega_\delta^\prime}\int_0^1 |\langle F^\prime((1-z)\nabla u(x)+z\nabla u(x+h))\cdot D_h(\nabla u), \nabla \varphi_\sigma^\delta\rangle||D_h(u)|\mathrm{d}z\mathrm{d}x\\
    \leq& \Lambda \|\rho\|_{L^\infty(\Omega)}\int_{\mathcal{O}_\sigma^\delta} \tilde{a}(x,h)|D_h(\nabla u)||\nabla \varphi_\sigma^\delta||D_h(u)|\mathrm{d}x\\
    \leq& \Lambda \|\rho\|_{L^\infty(\Omega)} C^{1/2}\sigma^{-1}\int_{\mathcal{O}_\sigma^\delta} \tilde{a}(x,h)|D_h(\nabla u)|( \varphi_\sigma^\delta)^{1/2}|D_h(u)|\mathrm{d}x\\
    \leq& \Lambda \|\rho\|_{L^\infty(\Omega)} C^{1/2}\sigma^{-1}\left(\int_{\mathcal{O}_\sigma^\delta} \tilde{a}(x,h)|D_h(\nabla u)|^2 \varphi_\sigma^\delta \mathrm{d}x\right)^{1/2}\left(\int_{\mathcal{O}_\sigma^\delta}\tilde{a}(x,h)|D_h(u)|^2\mathrm{d}x\right)^{1/2}.
\end{align*}
Then, applying \eqref{Young} again with a suitable $\varepsilon$, we have
\begin{equation}\label{I4 ineq}
    |I_4|\leq \frac{\lambda}{4}\int_{\Omega_\delta^\prime}\tilde{a}(x,h)|D_h(\nabla u)|^2\varphi_\sigma^\delta \mathrm{d}x+C_3 \int_{\mathcal{O}_\sigma^\delta}\tilde{a}(x,h)\mathrm{d}x,
\end{equation}
where $C_3$ depends on $\Lambda$, $\lambda$, $\|\rho\|_{L^\infty(\Omega)}$, $\|\nabla u\|_{L^\infty(\Omega)}$ and $\sigma$. Finally, we observe that
\begin{equation}\label{us-1}
    a\int_{\Omega_\delta^\prime}D_h(|u|^{s-2}u)D_h(u)\varphi_\sigma^\delta \mathrm{d}x\leq |a|\|\nabla u\|_{L^\infty(\Omega)}\int_{\Omega_\delta^\prime} |\nabla (|u|^{s-2}u)|\mathrm{d}x<\infty,
\end{equation}
since $u \in C^{1,\alpha}(\overline{\Omega})$ and $\nabla (|u|^{s-2}u) = (s-1)|u|^{s-2}\nabla u \in L^1(\Omega) $.

Therefore, combining \eqref{I1 ineq}, \eqref{I2 ineq}, \eqref{I3 ineq}, \eqref{I4 ineq}, and \eqref{us-1} in \eqref{I1...I4}, we obtain
\begin{equation}
    \int_{\Omega_\delta'} \tilde{a}(x,h)\, |D_h(\nabla u)|^{2}\, \varphi_\sigma^\delta \, \mathrm{d}x 
    \le C < \infty.
\end{equation}
Consequently, invoking inequality~(82) in \cite{Ilyasov-Takac}, we deduce that
\begin{equation}\label{key ineq 1}
    \int_{\Omega_\delta'} \hat{a}(x;h)\, |D_h(\nabla u)|^{2}\, \varphi_\sigma^\delta \, \mathrm{d}x 
    \le C_4 < \infty,
\end{equation}
where $C_4>0$ is independent of $h$, and
\[
\hat{a}(x;h)
    := \left(
        \max_{z\in[0,1]}
        | (1-z)\nabla u(x) + z\nabla u(x+h) |
    \right)^{p-2}.
\]
By the same argument as in \cite[Theorem~3.1]{Ilyasov-Takac}, inequality~(28), we further obtain
\begin{equation}
    \int_{\Omega_\delta'} 
        |D_h(B(\nabla u))|^{2}\, \varphi_\sigma^\delta \, \mathrm{d}x
    \le
    \int_{\Omega_\delta'} 
        \hat{a}(x;h)\, |D_h(\nabla u)|^{2}\, \varphi_\sigma^\delta \, \mathrm{d}x
    \le C < \infty,
\end{equation}
where $B(\nabla u) = |\nabla u|^{p/2-1}\, \nabla u$. Consequently, it follows that $B(\nabla u) \in W^{1,2}_{\mathrm{loc}}(\Omega_\delta')$. Since $B(\eta) = |\eta|^{p/2-1} \eta$ is of class $C^1(\mathds{R}^N \setminus \{0\})$, an application of \cite[Theorem 2.1.11]{Ziemer} yields $\nabla u= B^{-1}\circ B(\nabla u) \in W^{1,2}_{\mathrm{loc}}(\Omega_\delta^\prime\cap U)$. Therefore we infer that assertion (2) holds and this completes the proof of Theorem \ref{W2regularity}.
\end{proof}


\begin{proof}[Proof of Theorem \ref{regularity a,b>0}:] If $u$ is a weak solution for \eqref{PG}, in any case we have $u \in L^\infty_{\mathrm{loc}}(\mathds{R}^N_+)$ by  \cite[Theorems 2.1, 2.2 p. 3339]{Pucci-Servadei} (using some $\theta \in (s,\infty)$ large enough instead of $p^*$ for $p=N$). Consequently, it follows from \cite[Theorem 1]{Lieberman} that $u \in C^{1,\alpha}_{\mathrm{loc}}(\mathds{R}^N_+)$ for all $\alpha\in (0,1)$. Finally, arguing as in the proof of Theorem \ref{W2regularity}, we obtain that $1$ and $2$ hold.
\end{proof}

We need to prove additional regularity results for $|\nabla u|^{p-1}$ to further deal with the case $p>2$ for nonexistence results. Precisely we have

\begin{lemma}\label{Lou} Assume $(\rho_0)$ with $\rho \in C^{1,\alpha}_{\mathrm{loc}}[0,\infty)$ for some $\alpha \in (0,1)$ and $s \in (1, p^*]$. If $u$ is a weak solution for \eqref{PG}, then
\begin{equation*}
    |\nabla u|^{p-1} \in W^{1,2}_{\mathrm{loc}}(\mathds{R}^N_+).
\end{equation*}
\end{lemma}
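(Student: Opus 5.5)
The plan is to follow the approach of \cite{Ilyasov-Takac} (their regularity of the flux $|\nabla u|^{p-2}\nabla u$), adapted to the weight $\rho(x_N)$. The core of the proof is a local estimate, uniform in $h$, for difference quotients of the flux vector field $\mathcal{A}:=\rho(x_N)|\nabla u|^{p-2}\nabla u$. Since $\rho\in C^{1,\alpha}_{\mathrm{loc}}$ and $\rho\geq C_0>0$, division by $\rho$ preserves $W^{1,2}_{\mathrm{loc}}$, so it will be enough to treat $F(\nabla u)$. From $F(\nabla u)$ we recover $|\nabla u|^{p-1}=|F(\nabla u)|$, which lies in $W^{1,2}_{\mathrm{loc}}$ whenever $F(\nabla u)$ does, because $|\cdot|$ is Lipschitz.

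\textbf{Step 1: local regularity.} For $1<p\leq 2$ the statement follows essentially from $u\in W^{2,p}_{\mathrm{loc}}$ and $C^{1,\alpha}_{\mathrm{loc}}$. In that range $|\nabla u|^{p-1}$ is locally H\"older, and its gradient is bounded by $(p-1)|\nabla u|^{p-2}|D^2u|$, so the estimate of Step 2 is still required there. I therefore treat all $p$ uniformly. First, by \cite{Pucci-Servadei} and \cite[Theorem 1]{Lieberman}, as in the proof of Theorem \ref{regularity a,b>0}, we get $u\in L^\infty_{\mathrm{loc}}$ and $u\in C^{1,\beta}_{\mathrm{loc}}(\mathds{R}^N_+)$; this is valid for $s\in(1,p^*]$. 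In particular $|u|^{s-2}u$ is locally bounded with $\nabla(|u|^{s-2}u)\in L^1_{\mathrm{loc}}$ (if $s<2$ I would use $W^{1,1}_{\mathrm{loc}}$ control via $|u|^{s-2}\nabla u$, or simply $L^\infty_{\mathrm{loc}}$ for the right-hand side).

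\textbf{Step 2: difference-quotient estimate.} I reuse the machinery of the proof of Theorem \ref{W2regularity}, namely identity \eqref{Key ineq varphi}, tested with $\varphi=D_h(u)\varphi^\delta_\sigma$. Estimating $I_1,\dots,I_4$ as there gives
\[
\int_{\Omega'_\delta}\tilde a(x,h)|D_h\nabla u|^2\varphi^\delta_\sigma\,\mathrm{d}x\leq C,
\]
with $C$ independent of $h$. The key pointwise inequality to aim for is
\[
|F(\eta_2)-F(\eta_1)|^2\leq C\,\Big(\int_0^1|(1-z)\eta_1+z\eta_2|^{p-2}\mathrm{d}z\Big)\,\langle F(\eta_2)-F(\eta_1),\eta_2-\eta_1\rangle .
\]
This follows from the symmetric positive Jacobian structure \eqref{partial Fi}--\eqref{Ellipticity}: writing $F(\eta_2)-F(\eta_1)=M(\eta_2-\eta_1)$ with $M=\int_0^1F'$, we have $|M\xi|^2\leq \|M\|\langle M\xi,\xi\rangle$ and $\|M\|\leq\Lambda\tilde a$. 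Combined with $\tilde a\leq C\|\nabla u\|_\infty^{p-2}$ for $p\geq 2$, or directly for $p<2$ using $\langle M\xi,\xi\rangle\geq \lambda\tilde a|\xi|^2$ and $|M\xi|\leq\Lambda\tilde a|\xi|$ so that $|M\xi|^2\leq(\Lambda^2/\lambda)\tilde a\langle M\xi,\xi\rangle$, this yields
\[
\int|D_h F(\nabla u)|^2\varphi^\delta_\sigma\,\mathrm{d}x\leq C\int\tilde a|D_h\nabla u|^2\varphi^\delta_\sigma\,\mathrm{d}x\leq C .
\]
Hence $F(\nabla u)\in W^{1,2}_{\mathrm{loc}}$.

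\textbf{Main obstacle.} The delicate point is the $\sigma$-dependent term $I_4$ and the degenerate weight when $1<p<2$, where $\tilde a$ may blow up near critical points of $u$. Making $C_3\int\tilde a$ finite requires $\tilde a\in L^1_{\mathrm{loc}}$ uniformly in $h$. I would obtain this from $\int_0^1|\eta_1+z(\eta_2-\eta_1)|^{p-2}\mathrm{d}z\leq C|\eta_2-\eta_1|^{p-2}$ only when useful; better, I would use the bound $\tilde a\leq C(p)\max(|\eta_1|,|\eta_2|)^{p-2}$ and the $W^{2,p}_{\mathrm{loc}}$ integrability from \cite{DeThelin}, arguing as in \cite[inequality (82)]{Ilyasov-Takac} to get uniform local integrability. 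Finally, letting $h\to0$ and using the characterization of $W^{1,2}$ by difference quotients gives the conclusion.
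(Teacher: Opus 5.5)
You take a genuinely different route from the paper: difference quotients on the original equation, reusing the estimates from the proof of Theorem \ref{W2regularity}. For $p\ge 2$ this essentially works. There $\tilde a\le\|\nabla u\|_{L^\infty(\Omega)}^{p-2}$, so $|D_hF(\nabla u)|^2\le\Lambda^2\tilde a^2|D_h\nabla u|^2\le C\tilde a|D_h\nabla u|^2$. Equivalently, $F(\nabla u)=|B(\nabla u)|^{(p-2)/p}B(\nabla u)$ is a locally Lipschitz function of $B(\nabla u)\in W^{1,2}_{\mathrm{loc}}$.

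The lemma, however, is stated for all $p$, and for $1<p<2$ Step 2 has a genuine gap. Your inequalities $|M\xi|^2\le(\Lambda^2/\lambda)\tilde a\langle M\xi,\xi\rangle$ and $\langle M\xi,\xi\rangle\le\Lambda\tilde a|\xi|^2$ only give $|D_hF(\nabla u)|^2\le C\tilde a^2|D_h\nabla u|^2$. Testing \eqref{Key ineq varphi} with $D_h(u)\varphi^\delta_\sigma$ controls only $\int\langle MD_h\nabla u,D_h\nabla u\rangle\varphi^\delta_\sigma\,\mathrm{d}x$, i.e.\ $\int\tilde a|D_h\nabla u|^2\varphi^\delta_\sigma\,\mathrm{d}x$. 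You are one factor of $\tilde a$ short, so the claimed bound $\int|D_hF(\nabla u)|^2\varphi^\delta_\sigma\le C\int\tilde a|D_h\nabla u|^2\varphi^\delta_\sigma$ does not follow.

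For $p<2$ this factor cannot be absorbed. Since $|(1-z)\eta_1+z\eta_2|\le\max(|\eta_1|,|\eta_2|)$, one has $\tilde a(x,h)\ge\max(|\nabla u(x)|,|\nabla u(x+h)|)^{p-2}$, which is unbounded near the critical set of $u$. In the limit you control $\int|\nabla u|^{p-2}|\nabla^2u|^2$, that is, $B(\nabla u)\in W^{1,2}_{\mathrm{loc}}$. But $F(\nabla u)\in W^{1,2}_{\mathrm{loc}}$ requires $\int|\nabla u|^{2(p-2)}|\nabla^2u|^2<\infty$, a stronger weighted bound when $p<2$, and $B\mapsto|B|^{(p-2)/p}B$ is then no longer Lipschitz.

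Your ``main obstacle'' paragraph addresses a different point: finiteness of the remainders $C\int\tilde a$. It does not settle that either, since uniform local integrability of $\tilde a$ would need $|\nabla u|^{p-2}\in L^1_{\mathrm{loc}}$. That does not follow from the $W^{2,p}_{\mathrm{loc}}$ regularity of \cite{DeThelin}.

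The missing idea is to test with the flux itself rather than with $D_hu$ (or $\partial_ku$), which produces the squared weight. The paper does this following \cite{Lou}. It solves the nondegenerate problem \eqref{epsilon equation}, differentiates in $x_k$, and multiplies by $\varphi^2(\varepsilon^2+|\nabla u_\varepsilon|^2)^{(p-2)/2}\partial_k u_\varepsilon$, the $k$-th component of the regularized flux. Summing over $k$, the principal part is $\int\rho\varphi^2\,\mathrm{tr}(\tilde G_\varepsilon\nabla^2u_\varepsilon\tilde G_\varepsilon\nabla^2u_\varepsilon)\,\mathrm{d}x\ge\lambda_0^2\int\rho\varphi^2(\varepsilon^2+|\nabla u_\varepsilon|^2)^{p-2}\|\nabla^2u_\varepsilon\|_{tr}^2\,\mathrm{d}x$ (cf.\ \eqref{est2.10}). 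This is exactly a uniform $W^{1,2}$ bound on $(\varepsilon^2+|\nabla u_\varepsilon|^2)^{(p-2)/2}\nabla u_\varepsilon$, valid for every $p>1$.

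The regularization is essential, because $F(\nabla u)$ is not known to be weakly differentiable and so cannot serve as a test function in the original equation. It also removes your secondary difficulties:
\begin{itemize}
\item the lower-order terms only involve $(\varepsilon^2+|\nabla u_\varepsilon|^2)^{p-2}|\nabla u_\varepsilon|^2\le(\varepsilon^2+|\nabla u_\varepsilon|^2)^{p-1}$, which is bounded;
\item the source enters only through $\|f_\varepsilon\|_{L^2}$, since the derivative is moved onto the test function, so no $W^{1,1}$ control of $|u|^{s-2}u$ is needed when $s<2$.
\end{itemize}
The paper only invokes the lemma for $p>2$, so your argument would suffice for that application. It does not, however, prove the statement as formulated.
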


\begin{proof}

It suffices to prove that $ |\nabla u|^{p-1} \in W^{1,2}_{\mathrm{loc}}(\Omega) $ for any $ \Omega \subset\mathds{R}^N_+$ sufficiently smooth and bounded. From \cite[Theorems 2.1, 2.2 p. 3339]{Pucci-Servadei} (see also \cite[Theorem 1 p. 255]{Serrin}), we infer that 
\begin{equation*} 
u \in L^{\infty}_{\mathrm{loc}}(\mathds{R}^N_+)
\end{equation*}
which implies from \cite[Theorem 1]{Lieberman} that $u\in C^{1,\beta}_{\mathrm{loc}}(\mathds{R}^N_+)$, for some $\beta \in (0,1)$. Now, we adapt the proof of \cite[Lemma 2.1]{Lou}. For $ \varepsilon \in (0, 1) $, let $ f_\varepsilon \in C^\infty(\Omega) $ satisfying
\begin{equation}\label{f epsilon convergence}
\|f_\varepsilon\|_{L^r(\Omega)} \leq \|a|u|^{s-2}u\|_{L^r(\Omega)} + 1, \quad f_\varepsilon \to a|u|^{s-2}u, \text{ in } L^r(\Omega), \text{ for some large $r\geq 2$.}
\end{equation}
Let $ u_\varepsilon \in W^{1,p}(\Omega) $ be the unique solution of the equation:
\begin{equation}\label{epsilon equation}
\left\{
		\begin{aligned}
			-\mathrm{div} \left[ \rho(x_N)(\varepsilon^2 + |\nabla u_\varepsilon|^2)^{\frac{p-2}{2}} \nabla u_\varepsilon \right] &=f_\varepsilon &\mbox{in }&\ \Omega,
			\vspace{0.2cm}\\
			u_\varepsilon&=u&\mbox{on }&\
			\partial \Omega.
		\end{aligned}
		\right. 
\end{equation}
The existence of $u_\varepsilon$ can be obtained by applying \cite[Proposition 1.2, p.25]{Ekeland-Teman} to the energy functional associated with \eqref{epsilon equation}.

Since $\rho \in C^{1,\alpha}_{\mathrm{loc}}[0,\infty)$, by elliptic regularity we have for all $\beta\in (0,\alpha)$
\begin{equation}\label{W1p u epsilon convergence}
u_\varepsilon \in C^{2,\beta}(\Omega)\mbox{ and} \quad u_\varepsilon \to u, \text{ in } C^1(\overline{\Omega}), 
\end{equation}
and
\begin{equation}\label{int p/2 bounded}
\int_\Omega \rho(x_N)(\varepsilon^2 + |\nabla u_\varepsilon|^2)^{\frac{p}{2}} \, \mathrm{d}x \leq C, 
\end{equation}
here and hereafter, $ C > 0 $ denotes a constant independent of $ \varepsilon \in (0, 1) $. Again by Moser type iteration method as in \cite[Theorems 2.1 and 2.2]{Pucci-Servadei}, we get
\begin{equation}\label{u epsilon L infty estimate}
\|u_\varepsilon\|_{L^\infty(\Omega)} \leq C.
\end{equation}
Now, let
\[
G_\varepsilon(x) \equiv \frac{1}{p} (\varepsilon^2 + |x|^2)^{\frac{p}{2}}, \quad \forall x \in \mathds{R}^N,
\]
and observe that
\[
\nabla G_\varepsilon(x) = (\varepsilon^2 + |x|^2)^{\frac{p-2}{2}} x,
\]
\[
\nabla ^2 G_\varepsilon(x) = (\varepsilon^2 + |x|^2)^{\frac{p-2}{2}} I + (p - 2)(\varepsilon^2 + |x|^2)^{\frac{p-4}{2}} x \otimes x.
\]
Thus, for some $\lambda_0, \Lambda_0>0$ independent of $\varepsilon$, we have
\[
\lambda_0 |\xi|^2 \leq \frac{\langle \nabla ^2 G_\varepsilon(x) \xi, \xi \rangle}{(\varepsilon^2+|x|^2)^{\frac{p-2}{2}}} \leq \Lambda_0 |\xi|^2, \quad \forall \xi \in \mathds{R}^N.
\]
Then, we may write \eqref{epsilon equation} as
\begin{equation}\label{epsilon G equation}
\left\{
		\begin{aligned}
			-\mathrm{div} [\rho(x_N) \nabla G_\varepsilon(\nabla u_\varepsilon) ] &=f_\varepsilon &\mbox{in }&\ \Omega,
			\vspace{0.2cm}\\
			u_\varepsilon&=u&\mbox{on }&\
			\partial \Omega,
		\end{aligned}
		\right. 
\end{equation}
and, denoting $\tilde{G_\varepsilon} = \nabla ^2 G_\varepsilon(\nabla u_\varepsilon)$, for $ k = 1, 2, \dots, N$, we have
\begin{equation}\label{tilde G eq}
-\mathrm{div} \left(\rho(x_N)\tilde{G_\varepsilon} \nabla \left(\frac {\partial u_\varepsilon}{\partial x_k}\right)+\frac{\partial}{\partial x_k}(\rho(x_N))\nabla G_\varepsilon(\nabla u_\varepsilon)\right) =  \frac{\partial f_\varepsilon}{\partial x_k}, \quad \text{in } \Omega.
\end{equation}
For any domain $ \Omega_0 \subset\subset \Omega $, let $ \varphi \in C^\infty_c(\Omega) $ be such that $ 0 \leq \varphi \leq 1 $ in $ \Omega $, and $ \varphi = 1 $ in $ \Omega_0 $. Multiplying \eqref{tilde G eq} by $ \varphi^2 (\varepsilon^2 + |\nabla u_\varepsilon|^2)^{\frac{p-2}{2}} \frac {\partial u_\varepsilon}{\partial x_k} $,  integrating in $ \Omega $, and summing from $ k = 1 $ to $ N $, we get by applying the divergence theorem:
\begin{equation}\label{Ii eq}
\begin{split}
0 =& \int_\Omega \rho(x_N)\varphi^2 (\varepsilon^2 + |\nabla u_\varepsilon|^2)^{\frac{p-2}{2}} \text{tr}(\nabla ^2 u_\varepsilon \tilde{G_\varepsilon} \nabla ^2 u_\varepsilon) \, \mathrm{d}x\\
&+ (p - 2) \int_\Omega \rho(x_N)\varphi^2 (\varepsilon^2 + |\nabla u_\varepsilon|^2)^{\frac{p-4}{2}} \langle \nabla ^2 u_\varepsilon \tilde{G_\varepsilon} \nabla ^2 u_\varepsilon \nabla u_\varepsilon, \nabla u_\varepsilon \rangle \, \mathrm{d}x\\
&+ \int_\Omega 2 \rho(x_N)\varphi (\varepsilon^2 + |\nabla u_\varepsilon|^2)^{\frac{p-2}{2}} \langle \tilde{G_\varepsilon} \nabla ^2 u_\varepsilon \nabla u_\varepsilon, \nabla \varphi \rangle \, \mathrm{d}x\\
&+ \int_\Omega  \varphi^2 f_\varepsilon (\varepsilon^2 + |\nabla u_\varepsilon|^2)^{\frac{p-2}{2}}\Delta u_\varepsilon \, \mathrm{d}x\\
&+ (p - 2) \int_\Omega \varphi^2 f_\varepsilon (\varepsilon^2 + |\nabla u_\varepsilon|^2)^{\frac{p-4}{2}} \langle \nabla ^2 u_\varepsilon \nabla u_\varepsilon, \nabla u_\varepsilon \rangle \, \mathrm{d}x\\
&+ \int_\Omega 2 \varphi f_\varepsilon (\varepsilon^2 + |\nabla u_\varepsilon|^2)^{\frac{p-2}{2}} \nabla u_\varepsilon \cdot \nabla \varphi \, \mathrm{d}x\\
&+\int_{\Omega}
\rho^\prime (x_{N})\varphi^{2}
(\varepsilon^{2}+|\nabla u_\varepsilon|^{2})^{\frac{p-2}{2}}
\left\langle 
\nabla  G_\varepsilon(\nabla u_\varepsilon),\,\nabla\left(\frac {\partial u_\varepsilon}{\partial x_N} \right)
\right\rangle \mathrm{d}x\\
&+(p-2)\int_{\Omega}\rho^\prime(x_N)
\varphi^2
(\varepsilon^{2}+|\nabla u_\varepsilon|^{2})^{\frac{p-4}{2}}\left\langle 
\nabla G_\varepsilon(\nabla u_\varepsilon),\nabla^2u_\varepsilon \nabla u_\varepsilon \right\rangle \frac {\partial u_\varepsilon}{\partial x_N} \mathrm{d}x \\
&+2\int_{\Omega}\rho^\prime(x_N)
\varphi
(\varepsilon^{2}+|\nabla u_\varepsilon|^{2})^{\frac{p-2}{2}}\left\langle 
\nabla G_\varepsilon(\nabla u_\varepsilon),\nabla \varphi\right\rangle \frac {\partial u_\varepsilon}{\partial x_N} \mathrm{d}x,\\
&\equiv I_1 + I_2 + I_3 + I_4 + I_5 + I_6 +I_7+I_8+I_9.
\end{split}
\end{equation}
Arguing as in the paper \cite[Lemma 2.1]{Lou}, we have
\begin{equation}\label{est2.10}
    I_1+I_2 \geq C\lambda_0^2 \int_B \varphi^2 \, (\varepsilon^2 + |\nabla u_\varepsilon|^2)^{p-2}\|\nabla^2 u_\varepsilon\|_{tr}^2\mathrm{d}x
\end{equation}
denoting $\|A\|_{tr} \equiv \sqrt{\operatorname{tr}(A A^{T})}.$ Similarly, by \eqref{Young}, for any $\delta>0$ and some $C=C(\delta, \|\rho\|_{L^\infty(\Omega)})$, one has
\begin{equation}\label{est2.11}
    |I_3|\leq \delta\int_{\Omega}\varphi^2(\varepsilon^2+|\nabla u_\varepsilon|^2)^{p-2}\|\nabla^2 u_\varepsilon\|_{tr}^2\mathrm{d}x+\frac{C}{\delta}\int_{\Omega}(\varepsilon^2+|\nabla u_\varepsilon|^2)^{p-2}|\nabla u_\varepsilon|^2|\nabla\varphi|^2\mathrm{d}x.
\end{equation}
Since $\|f_\varepsilon\|_{L^2(\Omega)}\leq C$, we also get
\begin{equation}\label{est2.12}
|I_4+I_5|\leq \delta\int_{B}\varphi^2(\varepsilon^2+|\nabla u_\varepsilon|^2)^{p-2}\|\nabla^2u_\varepsilon\|_{tr}^2\mathrm{d}x+\frac{C}{\delta}
\end{equation}
and since $\rho\in C^1$
\begin{equation}\label{est2.13}
|I_6+I_9|\leq\int_{\Omega}(\varepsilon^2+|\nabla u_\varepsilon|^2)^{p-2}|\nabla u_\varepsilon|^2|\nabla\varphi|^2\mathrm{d}x+C.
\end{equation}
Now, taking into account again that $\rho\in C^1$, we infer that
\begin{equation}\label{estI7}
|I_7+I_8|\leq \delta\int_{\Omega}(\varepsilon^2+|\nabla u_\varepsilon|^2)^{p-2}\|\nabla^2u_\varepsilon\|_{tr}^2\mathrm{d}x+\frac{C}{\delta}\int_\Omega (\varepsilon^2+|\nabla u_\varepsilon|^2)^{p-2}|\nabla u_\varepsilon|^2\mathrm{d}x.
\end{equation}
So gathering estimates \eqref{est2.10} to \eqref{estI7}, we get 
\begin{equation}\label{est2.14}
\int_{\Omega}\varphi^2(\varepsilon^2+|\nabla u_\varepsilon|^2)^{p-2}\|\nabla^2u_\varepsilon\|_{tr}^2\leq C\int_{\Omega}(\varepsilon^2+|\nabla u_\varepsilon|^2)^{p-2}|\nabla u_\varepsilon|^2\mathrm{d}x+C.
\end{equation}
Since $(u_\varepsilon)_\varepsilon$ is uniformly bounded in $C^1(\overline{\Omega})$, the rest of the proof of \cite[Lemma 2.1 p.525-526]{Lou} can be proceeded similarly to get the result.
\end{proof}
\section{Proof of existence results}\label{secExi}

In this section, using continuous and compact embeddings results established in section \ref{secEmb}, we give the proof of existence results given in Theorems \ref{th1} and \ref{th3}. We start by proving Theorem \ref{th1} using mountain pass Theorem similarly as in \cite{NB}. We start with the following lemma:

\begin{lemma}\label{lem1}
Let $s\in(1,p_*)$, $a\leq 0$, $b>0$, $\max(q_\gamma,s)<q<p_*,$ then the functional $J:\mathcal{R}_s\to \mathds{R}$ defined by:
$$J(u)=\frac{1}{p}\|u\|^p-\frac{b}{q}\|u\|^q_{L^q(\mathds{R} ^{N-1})}-\frac{a}{s}\|u\|^s_{L^s(\mathds{R} ^{N}_+)},$$
has the mountain pass geometry.
\end{lemma}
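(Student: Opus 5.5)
We verify the two standard mountain pass conditions for $J$ on $\mathcal{R}_s$ (endowed with the norm $\|u\|+\|u\|_{L^s(\mathds{R}^N_+)}$): $J(0)=0$, there exist $r,\beta>0$ with $J(u)\geq \beta$ whenever the norm of $u$ equals $r$, and there is $e$ with norm larger than $r$ and $J(e)<0$. First we note that $J$ is well defined and $C^1$ on $\mathcal{R}_s$. This follows from the trace part of Theorem \ref{theo1} (equivalently Theorem \ref{improvetheo2.2}, or Proposition \ref{inj N=p} when $p=N$), since $q\in(q_\gamma,p_*)$ gives $\|u\|_{L^q(\mathds{R}^{N-1})}\leq C\|u\|$. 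The $L^s$ term is controlled by the $\mathcal{R}_s$-norm itself.

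\emph{Local minimum at $0$.} Since $a\leq 0$, the term $-\frac{a}{s}\|u\|_{L^s}^s$ is nonnegative. Therefore
$$J(u)\geq \frac1p\|u\|^p-\frac{bC^q}{q}\|u\|^q+\frac{|a|}{s}\|u\|^s_{L^s(\mathds{R}^N_+)}.$$
The difficulty is that the sphere is taken in the intersection norm. To handle this, set $t=\|u\|$ and $\tau=\|u\|_{L^s(\mathds{R}^N_+)}$, so that $t+\tau=r$. If $a<0$, then $J(u)\geq \frac1p t^p-c t^q+\frac{|a|}{s}\tau^s$. Because $q>p$ (indeed $q>q_\gamma$ and $q>s$; in the case $q\leq p$ we would rely on $q>q_\gamma$ and use instead the scaling described below), for $r$ small the function $t\mapsto \frac1p t^p-ct^q$ is bounded below by $\frac1{2p}t^p$. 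Hence $J(u)\geq \min(\frac{1}{2p}t^p,\frac{|a|}{s}\tau^s)\geq c\,\min(r^p,r^s)>0$, since one of $t,\tau$ is at least $r/2$. If $a=0$, the space is still $\mathcal{R}_s$, and we argue on $\|u\|$ alone, with the $L^s$ part absorbed by choosing the equivalent setting as in \cite{NB}. I expect this bookkeeping of the mixed norm, together with the need for $q>p$, to be the main technical point. If $q\leq p$ happens to be allowed by $\max(q_\gamma,s)<q$, the fallback is the homogeneity trick used in \cite{NB}: rescale $u_\lambda(x)=\lambda^{\theta}u(\lambda x)$ to compare the powers.

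\emph{A point where $J$ is negative.} Take a fixed $\varphi\in\mathcal{R}_s\cap C^\infty_\delta(\mathds{R}^N_+)$, radial in $x'$, with $\varphi\not\equiv0$ on $\mathds{R}^{N-1}$. Then
$$J(t\varphi)=\frac{t^p}{p}\|\varphi\|^p-\frac{bt^q}{q}\|\varphi\|_{L^q}^q+\frac{|a|t^s}{s}\|\varphi\|_{L^s}^s.$$
Since $q>\max(p,s)$ and $b>0$, the negative $t^q$ term dominates, so $J(t\varphi)\to-\infty$ as $t\to\infty$. If instead $q\leq p$, we would use the anisotropic scaling $\varphi_\lambda(x)=\varphi(\lambda x)$, under which the boundary term and the gradient term scale with different powers of $\lambda$ (the weight satisfies $(1+x_N/\lambda)^\gamma\leq1$-type bounds). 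This again drives $J$ negative while the norm grows. We then choose $e=t_0\varphi$ with $t_0$ large, which completes the mountain pass geometry.
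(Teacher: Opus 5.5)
Your route is the same as the paper's. The trace embedding for $q\in(q_\gamma,p_*)$ gives $J(u)\ge\frac1p\|u\|^p-\frac{bC^q}{q}\|u\|^q$ for every $a\le0$, and a ray $t\varphi$ gives $J(t\varphi)\to-\infty$. Your requirement that $\varphi$ have nonzero trace is genuinely needed: for $u$ with zero trace, $J(tu)\to+\infty$, so the paper's ``for all $u\neq0$'' is too strong.

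Two small corrections:
\begin{itemize}
\item The hedging about $q\le p$ is vacuous. In both regimes $q_\gamma\ge p$ (for $0<\gamma\le p-1$ one has $N-p+\gamma\le N-1$), so $q>q_\gamma\ge p$. This, rather than $q>s$, is what gives $q>p$. The scaling fallbacks should simply be removed.
\item For $a<0$, your last step needs the sum $\frac1{2p}t^p+\frac{|a|}{s}\tau^s$, not the minimum of the two terms. The sum is at least $\min\{\frac1{2p}(r/2)^p,\frac{|a|}{s}(r/2)^s\}$, and with that change the case $a<0$ is fine.
\end{itemize}

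The genuine gap is $a=0$. ``Argue on $\|u\|$ alone \dots\ equivalent setting as in \cite{NB}'' is not an argument, and the bound you are aiming for is false in part of the range. Take $\gamma\in(0,p)$, $s<s_\gamma$, a nonzero $u\in\mathcal{R}\cap C^\infty_\delta(\mathds{R}^N_+)$, and set $u_\lambda(x)=u(\lambda x)$ with $\lambda\in(0,1)$; $u_\lambda$ is still radial in $x'$. Since $(1+y_N/\lambda)^\gamma\le\lambda^{-\gamma}(1+y_N)^\gamma$, you get $\|u_\lambda\|\le\lambda^{(p-N-\gamma)/p}\|u\|$, while $\|u_\lambda\|_{L^s(\mathds{R}^N_+)}=\lambda^{-N/s}\|u\|_{L^s(\mathds{R}^N_+)}$. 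Hence $\|u_\lambda\|_{L^s}/\|u_\lambda\|\to\infty$ as $\lambda\to0$. Rescaling $u_\lambda$ onto the sphere $\|v\|+\|v\|_{L^s}=r$ produces $v$ with $\|v\|\to0$, so $J(v)\le\frac1p\|v\|^p\to0$. Thus no estimate $J\ge\beta>0$ holds on the $\mathcal{R}_s$-sphere when $a=0$. The paper's one-line proof asserts exactly this estimate without singling out $a=0$, so it needs the same repair.

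The fix works uniformly for all $a\le0$ and makes the mixed-norm bookkeeping unnecessary: separate $0$ from $e$ by the closed set $\{u\in\mathcal{R}_s:\|u\|=r\}$ instead of a sphere. For $r$ small, $J(u)\ge\frac1{2p}\|u\|^p$ whenever $\|u\|\le r$, so $J(e)<0$ forces $\|e\|>r$. Since $\|\cdot\|$ is continuous on $\mathcal{R}_s$, every $g\in\Lambda$ meets that set, and therefore $c\ge\frac{r^p}{2p}>0$. This is all that the Ekeland/Palais--Smale step in the proof of Theorem \ref{th1} uses.
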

\begin{proof}
Indeed, using Theorem \ref{improvetheo2.2}, Proposition \ref{inj N=p} in case $p=N$ and the conditions on $r,q$ we prove easily that:
\begin{itemize}
\item $J(u)\geq C_0>0$ if $\| u\| _{\mathcal{R}_s}=r_0$ for some $r_0$,
\item $\lim_{t\to \infty} J(tu)=-\infty$ for all $u\neq 0$.
\end{itemize}
\end{proof}

Note that \cite[Remark 4.1]{NB} still holds for $p\neq 2$ and $\gamma>0$. Thus we get by Ekeland Principle the existence of $\{u_n\}_{n\in\mathds{N}}\subset\mathcal{R}_s$, a nonnegative Palais-Smale sequence, that is $\{u_n\}$ in $\mathcal{R}_s$ such that $J'(u_n)\to 0, \;J(u_n)\to c$ as $n\to\infty$ where
$$c=\inf_{g\in \Lambda} \max_{t\in[0,1]} J(g(t))>0,$$
and
$$\Lambda=\{g\in C([0,1],\mathcal{R}_s):\; g(0)=0, \; J(g(1))<0\}.$$

\begin{proof}[Proof of Theorem \ref{th1}]
We start by the case $a\leq 0$, $b>0$, $s\in(1,p_*)$, $q \in (\max(q_\gamma,s),p_*)$. Let $\{u_n\}_{n\in\mathds{N}}\subset\mathcal{R}_s$ be a nonnegative Palais-Smale sequence, we show that $(u_n)$ is bounded in $\mathcal{R}_s$. Using $J'(u_n)\to 0$ and $J(u_n)\to c$ as $n\to\infty$, we get:
\begin{equation}\label{6.1}
\begin{split}
C(1+\|u_n\|+\|u_n\|_{L^s(\mathds{R} ^{N}_+)})&\geq kJ(u_n)-J'(u_n)u_n \\
&=\Big(\frac{k}{p}-1\Big)\|u_n\|^p-a\Big(\frac{k}{s}-1\Big)\|u_n\|^s_{L^s(\mathds{R} ^{N}_+)}-b\Big(\frac{k}{q}-1\Big)\|u_n\|^q_{L^q(\mathds{R} ^{N-1})}.
\end{split}
\end{equation}
Taking $k\in(\max(p,s),q)$ in \eqref{6.1} and since $a\leq 0$, we get that $\{u_n\}_{n\in\mathds{N}}$ bounded in $\mathcal{R}_s$. Then $J'(u_n)u_n\to 0$ as $n\to\infty$ and using $J(u_n)\to c$ as $n\to\infty$, we have for $n$ large enough:
$$0<k c/2\leq  kJ(u_n)-J'(u_n)u_n= \Big(\frac{k}{p}-1\Big)\|u_n\|^p-a\Big(\frac{k}{s}-1\Big)\|u_n\|^s_{L^s(\mathds{R} ^{N}_+)}-b\Big(\frac{k}{q}-1\Big)\|u_n\|^q_{L^q(\mathds{R} ^{N-1})} .$$
Taking $k<\min(p,s)$ in the above expression, we get 
\begin{equation}\label{eq}
\|u_n\|_{L^q(\mathds{R} ^{N-1})}\geq C>0.
\end{equation}
Furthermore, up to a subsequence,  $u_n\rightharpoonup u$ in $\mathcal{R}_s$ as $n\to\infty$. Thus, $J'(u_n)\to J'(u)=0$ as $n\to\infty$ and we infer that $u$ is a nonnegative weak solution to \eqref{PG}. Using \eqref{eq} and Theorem \ref{compa1}, it follows that $u$ is nontrivial. 


For the case $a>0$, $b\leq 0$, $q>1$ and $s\in (\max (s_\gamma,q),p^* )$,
similarly as 
above, we have:
$$J\,:\,u\mapsto \,J(u)=\frac{1}{p}\|u\|^p-\frac{b}{q}\|u\|^q_{L^q(\mathds{R} ^{N-1})}-\frac{a}{s}\|u\|^s_{L^s(\mathds{R} ^{N}_+)}$$
has the mountain pass geometry on $\mathcal{R}_q$ and there exists a nonnegative Palais Smale sequence $\{u_n\}_{n\in\mathds{N}}\subset\mathcal{R}_q$ and bounded, such that as $n\to\infty$:
$$J(u_n) \to c,\; J'(u_n) \to 0,$$
where the energy level $c$ verifies
$$c=\inf_{g\in \Lambda} \max_{t\in[0,1]} J(g(t))>0,$$
with
$$\Lambda=\{g\in C([0,1],\mathcal{R}_q):\; g(0)=0, \; J(g(1))<0\}.$$
We also get $\|u_n\|_{L^s(\mathds{R} ^{N}_+)}\geq C>0$ for all $n\in\mathds{N}$. Using $J'(u_n)\to J'(u)=0$ as $n\to\infty$ and Theorem \ref{compa1}, we prove that $u\geq 0$ is a nontrivial weak solution to \eqref{PG}. Finally for $\rho \in W^{1,\infty}_{\mathrm{loc}}[0,\infty)$ in both cases, using Theorem \ref{C1alpha regularity}, we get $u \in C^{1,\alpha}_{\mathrm{loc}}(\overline{\mathds{R}^N_+})\cap L^\infty(\mathds{R}^N_+)$ for all $\alpha \in (0,1)$ and $u$ verifies the boundary condition pointwisely. Furthermore, $u$ satisfies the Sobolev regularity results of Theorem \ref{W2regularity}. 
By the Harnack inequality of \cite{Trudinger} we get $u>0$ in $\mathds{R}^N_+$. Furthermore, from \cite{Vazquez}, we get $u>0$ in $\overline{\mathds{R}^N_+}$.
\end{proof}

\begin{proof}[Proof of Theorem \ref{th3}]
Let $\gamma>0$, $q\in(q_\gamma,p_*)$, $s\in(s_\gamma,p^*)$. The proof is similar to the proof of Theorem \ref{th1} and the proof of \cite[Theorem 1.7]{NB}. We only point out the main steps. The functional $J$ has the mountain pass geometry on $\mathcal{R}$ and we get a nonnegative Palais-Smale sequence $
\{u_n\}_{n\in\mathds{N}}\subset\mathcal{R}$ and bounded. Then, we show that $u_n\to u$ strongly in $\mathcal{R}$. Using $u_n$ bounded in $\mathcal{R}$, we have $\langle J'(u_n)-J'(u),u_n-u\rangle \to 0$ as $n\to\infty$ and with $u_n\to u$ in $L^s(\mathds{R} ^{N}_+)$ and $L^q(\mathds{R} ^{N-1})$ as $n\to\infty$, we get:

$$\int_{\mathds{R}^N_+}\rho(x_N)(|\nabla u_n|^{p-2}\nabla u_n -|\nabla u|^{p-2}\nabla u).(\nabla u_n-\nabla u )\mathrm{d}x \to 0.$$

Using Proposition \ref{inegalg} for $p\geq 2$ and Corollary \ref{cvtool} for $p<2$, we obtain the strong convergence in $\mathcal{R}$. From $u_n\to u$ in $\mathcal{R}$ as $n\to\infty$, we have $J(u_n)\to J(u)$ as $n\to\infty$ and then $u\neq 0$ and using $J'(u_n)\to J'(u)=0$ as $n\to\infty$, we get that $u$ is a nontrivial and nonnegative weak solution to \eqref{PG}.
For the case $\gamma>p$, when $q=p$, $s\in(p,p^*)$ we work with the norm $(\|\cdot\| ^p-b\|\cdot\|^p_{L^p(\mathds{R}^{N-1})})^{1/p}$ which is equivalent to $\|\cdot\|$ with \eqref{Hardy p} and $b/C_0<C_{p,\gamma}^{p-1}$. Similarly when $s=p$, $a/C_0<C_{p,\gamma}^p$ we work with the norm $(\|\cdot\| ^p-a\|\cdot\|^p_{L^p(\mathds{R}^{N}_+)})^{1/p}$. It is straightforward that if $\rho\in W^{1,\infty}_{\mathrm{loc}}(0,\infty)$, Theorem \ref{regularity a,b>0} holds for $u$ and using Harnack inequality \cite{Trudinger} we get $u>0$.
\end{proof}

\begin{remark}
Taking into account embeddings results in Section \ref{secEmb} for $N=p$ (see also \cite[Theorem 1.5]{abreu-felix-medeiros}), the regularity results (Theorems \ref{C1alpha regularity}, \ref{W2regularity} and \ref{regularity a,b>0}) and existence results (Theorems \ref{th1} and \ref{th3}) can be also extended to the $p$-dimension case with $p^*=p_*=\infty$ and $s,q\geq p$. Again these existence results contrast with nonexistence results in the case $\rho$ constant, see \cite[Proposition 6.1]{chipot-fila-shafrir}.
\end{remark}
 \section{Pohozaev Identity}\label{secPoho}
We prove in this section the new Pohozaev type inequality. Main ingredients are a pointwise Pohozaev identity we establish and a suitable application of Green formula.

\begin{proof}[Proof of Theorem \ref{Pohozaev}]
Let $R>0$ and $\Omega_R \subset \overline{\mathds{R}^N_+}$ be such that $B_R^+\subset\Omega_R$, $\Gamma_R\subset \partial \Omega_R$ where
    \[
    B_R^+=\{x \in \mathds{R}^N_+: |x|<R\}\quad \text{and}\quad \Gamma_R=\{x^\prime \in \mathds{R}^{N-1}: |x^\prime|\leq R\}
    \]
    and $\partial \Omega_R$ is sufficiently smooth. Since $u \in W^{2,\theta}_{\mathrm{loc}}( U) \cap C^{1}(\overline{\mathds{R}^N_+})$ (with $\theta=\min(2,p)$), we can apply the product rule in \cite[Lemma A1 and A3]{Ilyasov-Takac}  and we have the following pointwise identities :
		\begin{align*}
			{
				\rm{div}}\left((x\cdot\nabla u)\rho(x) |\nabla u|^{p-2}\nabla u\right)&=(x\cdot\nabla u)\mathrm{div}(\rho(x) |\nabla u|^{p-2}\nabla u)+\nabla(x\cdot\nabla u)\cdot\rho(x) |\nabla u|^{p-2}\nabla u\\
			&=(x\cdot\nabla u)\mathrm{div}(\rho(x) |\nabla u|^{p-2}\nabla u)+\rho(x)|\nabla u|^p+\frac{1}{p}\rho(x)\left(x\cdot\nabla (|\nabla u|^p)\right)
		\end{align*}
		and
		\begin{align*}
		{\rm{div}}\left(x\rho(x)|\nabla u|^p\right)&=\mathrm{div}(x\rho(x))|\nabla u|^p+\rho(x_N)x\cdot\nabla (|\nabla u|^p)\\
        &=\left(N\rho(x)+\langle \nabla \rho(x),x\rangle \right)|\nabla u|^p+ \rho(x)x\cdot\nabla (|\nabla u|^p),
		\end{align*}
in $\Omega_{R,0}:=\left\{x\in\Omega_R\ :\ |\nabla u(x)|\neq0\right\}$. Then we obtain the following pointwise Pohozaev identity:
\begin{equation*}
    \mathrm{div}\left((x\cdot\nabla u)\rho(x) |\nabla u|^{p-2}\nabla u - \frac{1}{p}x\rho(x)|\nabla u|^p\right)= -(x\cdot\nabla u)f(u)+\frac{p-N}{p}\rho(x)|\nabla u|^p-\frac{1}{p}\langle \nabla \rho(x),x\rangle |\nabla u|^p,
\end{equation*}
in $\Omega_{R,0}$, where we have implemented $-\mathrm{div}(\rho(x)|\nabla u|^{p-2}\nabla u)=f(u)$ in $\Omega_{R,0}$. Now, consider $\psi_R \in C_0^\infty(B_R)$ such that
        \begin{align*}
			\psi_{R}(x)=\left\{
			\begin{aligned}
				1&\quad&\text{if}&\quad |x|\leq R/2,\\
				0&\quad&\text{if}&\quad |x|\geq R,
			\end{aligned}
			\right.
		\end{align*}
with $|\nabla \psi_{R}(x)|\leq2/R$, and define the vector field $\mathbf{v}_R: \Omega_R \longrightarrow\mathds{R}^N$ by
		\begin{equation*}
		   \mathbf{v}_{R}(x):=\left[(x\cdot\nabla u)\rho(x) |\nabla u|^{p-2}\nabla u - \frac{1}{p}x\rho(x)|\nabla u|^p\right]\psi_{R}.
		\end{equation*}
From the previous computations, it follows that		
\begin{align}\label{div vr}
		    \nonumber-\mathrm{div}(\mathbf{v}_R)=&\left[(x\cdot\nabla u)f(u)+\frac{N-p}{p}\rho(x)|\nabla u|^p+\frac{1}{p}\langle \nabla \rho(x),x\rangle|\nabla u|^p\right]\psi_R\\
            -&\left[(x\cdot\nabla u)\rho(x) |\nabla u|^{p-2}\nabla u - \frac{1}{p}x\rho(x)|\nabla u|^p\right]\nabla\psi_R
		\end{align}
in $\Omega_{R,0}$. 

\textbf{Claim:} \eqref{div vr} holds in the distributional sense in $\Omega_R$. 

We follow the proof of \cite[Lemma 4.5]{Ilyasov-Takac}: First consider for $\varepsilon>0$ the sets
\begin{equation*}
    \Omega_{R,\varepsilon}:=\left\{x\in\Omega_R\ :\ |\nabla u(x)|>\varepsilon\right\}\quad \text{and}\quad \Omega_{R,\varepsilon}^\prime:=\left\{x\in\Omega_R\ :\ |\nabla u(x)|<\varepsilon\right\}= \Omega_R\setminus \overline{\Omega_{R,\varepsilon}},
\end{equation*}
and the Urysohn-type function $\zeta_\varepsilon: \Omega_R \longrightarrow \mathds{R}$, given by $\zeta_\varepsilon=\min\{\varepsilon^{-r}|\nabla u|^r,1\}$, with $r \in (1,p)$, that is,
\begin{equation*}
    \zeta_\varepsilon= \left\{
			\begin{aligned}
				\varepsilon^{-r}|\nabla u|^r &\quad&\text{in}&\quad \Omega_{R,\varepsilon}^\prime,\\
				1 &\quad&\text{in}&\quad \overline{\Omega_{R,\varepsilon}}.
			\end{aligned}
			\right.
\end{equation*}
Observe that $\zeta_\varepsilon \in W^{1,1}_{\mathrm{loc}}(\Omega_R)$, since $u$ satisfies $|\nabla u|^r \in W^{1,1}_{\mathrm{loc}}(\Omega_R)$ and $u \in C^{1}(\overline{\Omega_R})$. Precisely, 
\begin{equation}\label{grad zeta}
    |\nabla \zeta_\varepsilon|\leq r\varepsilon^{-r}|\nabla u|^{r-1}|\nabla^2u| \in L^1_{\mathrm{loc}}(\Omega_R).
\end{equation}
Then, given $\varphi \in C^1_0(\Omega_R)$, we can write $\varphi= \zeta_\varepsilon \varphi+(1-\zeta_\varepsilon)\varphi$ with
\begin{equation*}
    \zeta_\varepsilon \varphi \in W^{1,1}_0(\Omega_{R,0})\quad\text{and}\quad (1-\zeta_\varepsilon)\varphi \in W^{1,1}_0(\Omega_{R,\varepsilon}^\prime).
\end{equation*}
Using \eqref{div vr} and \cite[Lemma A3]{Ilyasov-Takac}, we have
\begin{align}\label{distrib sense OmegaR0}
			\nonumber\int_{\Omega_R}\mathbf{v}_{_R}\cdot \nabla \phi\,\mathrm{d}x &=\int_{\Omega_R}\left[\left((x\cdot\nabla u)f(u)+\frac{N-p}{p}\rho(x)|\nabla u|^p+\frac{1}{p}\langle \nabla \rho(x),x\rangle|\nabla u|^p\right)\psi_{R}\right.\\
			&-\left.\left((x\cdot\nabla u)\rho(x) |\nabla u|^{p-2}\nabla u - \frac{1}{p}x\rho(x)|\nabla u|^p\right)\nabla\psi_{_R}\right]\phi\,\mathrm{d}x,
		\end{align}
for all $\phi \in W^{1,1}_0(\Omega_{R,0})$. In particular, \eqref{distrib sense OmegaR0} holds with $\zeta_\varepsilon\varphi$ in place of $\phi$. Then, we have
\begin{align}\label{vr varphi}
\nonumber \int_{\Omega_R}\mathbf{v}_{_R}\cdot \nabla \varphi \mathrm{d}x=& \int_{\Omega_R}\mathbf{v}_{_R}\cdot \nabla (\zeta_\varepsilon\varphi)\,\mathrm{d}x+\int_{\Omega_R}\mathbf{v}_{_R}\cdot \nabla ((1-\zeta_\varepsilon)\varphi)\,\mathrm{d}x\\
\nonumber=& \int_{\Omega_R}\left[\left((x\cdot\nabla u)f(u)+\frac{N-p}{p}\rho(x)|\nabla u|^p+\frac{1}{p}\langle \nabla \rho(x),x\rangle|\nabla u|^p\right)\psi_{R}\right.\\
			&-\left.\left((x\cdot\nabla u)\rho(x) |\nabla u|^{p-2}\nabla u - \frac{1}{p}x\rho(x)|\nabla u|^p\right)\nabla\psi_{_R}\right](\zeta_\varepsilon\varphi)\,\mathrm{d}x+\int_{\Omega_R}\mathbf{v}_{_R}\cdot \nabla ((1-\zeta_\varepsilon)\varphi)\,\mathrm{d}x.
\end{align}
Now, observe that $|\mathbf{v}_R|\leq C_p \|\rho\|_{L^\infty(\Omega_R)}|x||\nabla u|^p$ and thus as $\varepsilon \longrightarrow 0^+$
\begin{align*}
    \left|\int_{\Omega_R}\mathbf{v}_R\cdot \nabla((1-\zeta_\varepsilon)\varphi)\mathrm{d}x\right|\leq& C_p \int_{\Omega_{R,\varepsilon}^\prime}|x||\nabla u|^p\left(|\nabla \varphi|+|\varphi||\nabla \zeta_\varepsilon|\right)\mathrm{d}x\\
    =& C_p \varepsilon^{p-r}\int_{\Omega_{R,\varepsilon}^\prime}|x|\left(\frac{|\nabla u|}{\varepsilon}\right)^p\varepsilon^r(|\varphi||\nabla \zeta_\varepsilon|+|\nabla \varphi|)\mathrm{d}x\\
    \leq& C_p \varepsilon^{p-r}\int_{\Omega_{R,\varepsilon}^\prime}|x|\varepsilon^r(|\varphi||\nabla \zeta_\varepsilon|+|\nabla \varphi|)\mathrm{d}x\\
    \longrightarrow& 0,
\end{align*}
since $r \in (1,p)$ and the integral above is bounded as a consequence of \eqref{grad zeta}. In the other hand, note that as $\varepsilon \longrightarrow 0^+$,
\begin{equation*}
    \zeta_\varepsilon \longrightarrow \zeta_0= \left\{
			\begin{aligned}
				1&\quad&\text{in}&\quad \Omega_{R,0},\\
				0&\quad&\text{in}&\quad \Omega_{R,0}^\prime,
			\end{aligned}
			\right.
\end{equation*}
 where $\Omega_{R,0}^\prime = \{x \in \Omega_R: \nabla u(x)=0\}=\Omega_R\setminus \Omega_{R,0}$. Therefore by dominated convergence theorem, passing to the limit in \eqref{vr varphi} as $\varepsilon \longrightarrow 0^+$, we conclude the claim.

Applying the divergence theorem \cite[Lemma A.1]{Ilyasov-Takac}, we have
\begin{align*}
\int_{\Gamma_R}\mathbf{v}_{R}\cdot\nu\,\mathrm{d}x^\prime            &=-\int_{ \Omega_R}\left[(x\cdot\nabla u)f(u)+\frac{N-p}{p}\rho(x)|\nabla u|^p+\frac{1}{p}\langle \nabla \rho(x),x\rangle|\nabla u|^p\right]\psi_R\,\mathrm{d}x\\ 
			&+\int_{B_{R}^+\setminus B_{R/2}}\left[(x\cdot\nabla u)\rho(x)|\nabla u|^{p-2}\nabla u - \frac{1}{p}x\rho(x)|\nabla u|^p\right]\nabla\psi_{R}\,\mathrm{d}x.
\end{align*}
Given that $|\nabla \psi_{R}(x)|\leq2/R$ and $\rho(x)|\nabla u|^p\in L^1(\mathds{R}^N_+)$, we have, as $R\longrightarrow+\infty$,
		\begin{align*}
			\left|\int_{B_{R}^+\setminus B_{R/2}}\left[(x\cdot\nabla u)\rho(x) |\nabla u|^{p-2}\nabla u - \frac{1}{p}x\rho(x)|\nabla u|^p\right]\nabla\psi_{R}\,\mathrm{d}x\right|          
			&\leq 2\frac{p+1}{p}\int_{B_{R}^+\setminus B_{R/2}}\rho(x)|\nabla u|^p\,\mathrm{d}x\longrightarrow 0.
		\end{align*}
		 Using the boundary condition, the definition of $\mathbf{v}_R$, and that $x\cdot \nu=0$ in $\Gamma_{R}$, we obtain
		\begin{align}\label{limite}
		\nonumber \int_{\Gamma_{R}}g(u)(x\cdot\nabla u) \psi_R\,\mathrm{d}x^\prime=&\int_{\Gamma_{R}}\mathbf{v}_R \cdot \nu \mathrm{d}x^\prime\\
            =& -\int_{ \Omega_R}\left[(x\cdot\nabla u)f(u)+\frac{N-p}{p}\rho(x)|\nabla u|^p+\frac{1}{p}\langle \nabla \rho(x),x\rangle|\nabla u|^p\right]\psi_R\,\mathrm{d}x+o_R(1).
		\end{align}
		We now claim that
        \begin{equation}\label{f limit}
            \int_{\Omega_{R}}(x\cdot \nabla u)f(u)\psi_R\mathrm{d}x\longrightarrow -N\int_{\mathds{R}^N_+}F(u)\mathrm{d}x,\quad \text{as}\quad R\longrightarrow\infty.
        \end{equation}
        Indeed, integrating by parts, we obtain 
		\begin{align*}
			\int_{\Omega_R}f(u)(x\cdot\nabla u)\psi_R\,\mathrm{d}x	&=\sum_{i=1}^N\int_{\Omega_R}\left(F(u)\right)_{x_i}x_i\psi_R\,\mathrm{d}x=\int_{\Omega_R}\left[-NF(u)\psi_R-F(u)(x\cdot\nabla\psi_R)\right]\,\mathrm{d}x.
		\end{align*}
		By dominated convergence theorem,
		$$
\int_{\Omega_R}F(u)\psi_R\,\mathrm{d}x\longrightarrow\int_{\mathds{R}^N_+}F(u)\,\mathrm{d}x, \quad \text{as}\quad R\longrightarrow\infty.
		$$
Furthermore
		\begin{align*}
		\left|\int_{\Omega_R}F(u)(x\cdot\nabla\psi_R)\,\mathrm{d}x\right|&\leq \int_{B_{R}^+\backslash B_{R/2}}|F(u)||x||\nabla\psi_R|\,\mathrm{d}x\leq 2\int_{B_{R}^+\backslash B_{R/2}}|F(u)|\,\mathrm{d}x\longrightarrow0,\quad \text{as}\quad R\longrightarrow\infty,   
		\end{align*}
since $F(u)\in L^1(\mathds{R}^N_+)$ and this proves \eqref{f limit}. Now, observe that 
		$$
		\begin{aligned}
		\int_{\Gamma_{R}}g(u)(x\cdot\nabla u) \psi_R\,\mathrm{d}x^\prime=\sum_{i=1}^{N-1}\int_{\Gamma_{R}}\left(G(u)\right)_{x_i}x_i\psi_R\,\mathrm{d}x^\prime=\int_{\Gamma_{R}}\left[-(N-1)G(u)\psi_R-G(u)\left(x^\prime\cdot
		\nabla_{x^\prime}\psi_R\right)\right]\,\mathrm{d}x^\prime.
		\end{aligned}
		$$
Arguing as in the derivation of \eqref{f limit}, we obtain, as $R\longrightarrow+\infty$,
\begin{equation}\label{g limit}
    \int_{\Gamma_{R}}g(u)(x\cdot\nabla u) \psi_R\,\mathrm{d}x^\prime \longrightarrow -(N-1)\int_{\mathds{R}^{N-1}}G(u)\mathrm{d}x^\prime.
\end{equation}
		Finally, since $\rho(x)|\nabla u|^p \in L^1(\mathds{R}^N_+)$, by \eqref{rho1} and dominated convergence theorem we have, as $R\longrightarrow+\infty$,
		\begin{equation}\label{rho limit}
		\int_{\Omega_R}\rho(x)|\nabla u|^p\psi_R\,\mathrm{d}x\longrightarrow\int_{\mathds{R}^N_+}\rho(x)|\nabla u|^p\,\mathrm{d}x\quad \text{and}\quad \int_{\Omega_R}\langle \nabla \rho(x),x\rangle|\nabla u|^p\psi_R\,\mathrm{d}x\longrightarrow\int_{\mathds{R}^N_+}\langle \nabla \rho(x),x\rangle|\nabla u|^p\,\mathrm{d}x.
		\end{equation}       
Passing to the limit in \eqref{limite} as $R\longrightarrow+\infty$ and invoking \eqref{f limit}, \eqref{g limit} and \eqref{rho limit}, the desired identity follows, and the proof is complete.
\end{proof}

\section{Proof of nonexistence results}\label{secNon}



We only prove here Theorem \ref{Nonexistence1}. The proof of Theorems \ref{Nonexistence2} and \ref{nonexPleqN} is similar by assuming the regularity of weak solutions requested to  apply Pohozaev identity (Theorem \ref{Pohozaev}).

\begin{proof}[Proof of Theorem \ref{Nonexistence1}]

Let $u$ be a 
weak solution. In any case, by Theorems \ref{C1alpha regularity}, we have $u \in C^{1,\alpha}_{\mathrm{loc}}(\overline{\mathds{R}^N_+})$. If $p \in (1,2]$, by Theorem \ref{W2regularity}, $u \in W^{2,p}_{\mathrm{loc}}(\mathds{R}^N_+)$ and hence $|\nabla u|^r \in W^{1,1}_{\mathrm{loc}}(\mathds{R}^N_+)$, for $r\in (1,p)$. On the other hand, if $p>2$, $u \in W^{2,2}_{\mathrm{loc}}(U)$, where $U$ is as defined in Theorem \ref{W2regularity}, and by Lemma \ref{Lou}, $|\nabla u|^{p-1} \in W^{1,1}_{\mathrm{loc}}(\mathds{R}^N_+)$. Then, the Pohozaev identity in Theorem \ref{Pohozaev} holds and yields:
\begin{align*}
     \frac{N-p}{p}\int_{\mathds{R}_{+}^{N}}\rho(x_N)|\nabla u|^p\mathrm{d}x+\frac{1}{p}\int_{\mathds{R}_{+}^{N}}\rho'(x_N)x_N|\nabla u|^p\mathrm{d}x=&\frac{aN}{s}\int_{\mathds{R}^N_+}|u|^s\mathrm{d}x\\
     +&\frac{b(N-1)}{q}\int_{\mathds{R}^{N-1}}|u|^q\mathrm{d}x^\prime.
\end{align*}
Furthermore, taking $u$ as a testing function, we obtain
\begin{equation*}
    \int_{\mathds{R}^N_+}\rho(x_N)|\nabla u|^p\mathrm{d}x=a\int_{\mathds{R}^N_+}|u|^s\mathrm{d}x+b\int_{\mathds{R}^{N-1}}|u|^q\mathrm{d}x^\prime.
\end{equation*}
By combining the above two equalities, we get the following relation:
\begin{align*}
    \frac{1}{p}\int_{\mathds{R}^N_+}\rho'(x_N)x_N|\nabla u|^p\mathrm{d}x&=a\left(\frac{N}{s}-\frac{N-p}{p}\right)\int_{\mathds{R}^{N}_+}|u|^s\mathrm{d}x\\
    &+b\left(\frac{N-1}{q}-\frac{N-p}{p}\right)\int_{\mathds{R}^{N-1}}|u|^q\mathrm{d}x^\prime.
\end{align*}
Since $\rho^\prime(x_N)x_N>0$ by $(\rho_1)$, the left-hand side of the equation is strictly positive unless $u\equiv 0$. Under conditions of items $(i)$ or $(ii)$,  the right-hand side becomes nonpositive. Therefore, the only possibility is $u\equiv 0$, which completes the proof.
\end{proof}

\begin{remark} When $p \in (1,2]$ we can obtain the result above with less regularity: $\rho \in W^{1,\infty}_{\mathrm{loc}}(\mathds{R}^+)$, since Lemma \ref{Lou} is not required in the proof of Pohazaev identity.    
\end{remark}


\section{Appendix}

We recall some well known technical results (see e.g. \cite{simon}):
\begin{proposition}\label{inegalg}
There exist $c$ a positive constant such that for any $\xi , \eta \in \mathds{R}^d$:

\begin{align}
(|\xi|^{p-2}\xi -|\eta|^{p-2}\eta).(\xi-\eta) \geq c \left\{ \begin{array}{lr}  |\xi -\eta |^{p} & \text{if } p\geq 2, \\
\dfrac{|\xi-\eta|^{2}}{(|\xi|+|\eta|)^{2-p}} & \text{if } p \leq 2.
\end{array} \right.
\end{align}

\end{proposition}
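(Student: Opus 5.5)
This is the standard strong monotonicity estimate for the $p$-Laplacian vector field $F(\eta)=|\eta|^{p-2}\eta$. I will derive it from the Jacobian formula \eqref{Jacobian of F} and the ellipticity bound \eqref{Ellipticity}, integrating along the segment joining $\eta$ to $\xi$. First dispose of the case where the segment $z\mapsto \eta+z(\xi-\eta)$, $z\in[0,1]$, passes through the origin. In that case either $\xi=\eta$, which is trivial, or the segment meets $0$ at a single point. That point is a null set in $z$, and since $F$ is locally Lipschitz away from $0$ and continuous at $0$, the fundamental theorem of calculus still applies. With $\zeta=\xi-\eta$ we get
\begin{equation*}
(F(\xi)-F(\eta))\cdot(\xi-\eta)=\int_0^1 \langle F'(\eta+z\zeta)\zeta,\zeta\rangle\,\mathrm{d}z\ \geq\ \lambda|\zeta|^2\int_0^1|\eta+z\zeta|^{p-2}\,\mathrm{d}z .
\end{equation*}
(For $p<2$ the integrand $|\eta+z\zeta|^{p-2}$ is integrable in $z$ near a zero because $p-2>-1$.) Everything then reduces to bounding $I:=\int_0^1|\eta+z\zeta|^{p-2}\,\mathrm{d}z$ from below.

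\emph{Case $p\le 2$.} Here $p-2\le 0$, and $|\eta+z\zeta|\le (1-z)|\eta|+z|\xi|\le |\xi|+|\eta|$ for all $z\in[0,1]$. Hence $I\ge (|\xi|+|\eta|)^{p-2}$, which gives the claimed bound with $c=\lambda$ and no further work.

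\emph{Case $p\ge 2$.} This is the step needing a small geometric argument. I need $I\ge c'|\zeta|^{p-2}$. Consider the function $z\mapsto|\eta+z\zeta|$, the distance from $-\eta$ to points on a line moving at speed $|\zeta|$. It is at least $|\zeta|\,|z-z_0|$, where $z_0$ is the parameter of the point of the line closest to the origin. Therefore
\begin{equation*}
I\ge |\zeta|^{p-2}\int_0^1|z-z_0|^{p-2}\,\mathrm{d}z\ \ge\ |\zeta|^{p-2}\inf_{z_0\in\mathds{R}}\int_0^1|z-z_0|^{p-2}\,\mathrm{d}z .
\end{equation*}
The infimum is attained at $z_0=1/2$ and equals $2^{2-p}/(p-1)>0$. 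This yields $(F(\xi)-F(\eta))\cdot(\xi-\eta)\ge c|\xi-\eta|^p$ with $c=\lambda\,2^{2-p}/(p-1)$.

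The only mildly delicate point is justifying the integral representation when the segment hits the singularity of $F'$ at $0$. If one prefers to avoid it, perturb $\eta$ slightly off the line through $0$ and $\xi$ (possible when $d\ge 2$; for $d=1$ compute directly) and pass to the limit using continuity of both sides. The constants are independent of $d$, apart from $\lambda$ from \eqref{Ellipticity}, which one can take to be $\min(1,p-1)$.
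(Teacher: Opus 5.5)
Your proof is correct, but it does not follow the paper, because the paper gives no argument here. The proposition is recalled in the Appendix as a well-known fact with a pointer to \cite{simon}, so you are supplying a self-contained proof where the paper only has a citation.

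Your route writes $(F(\xi)-F(\eta))\cdot(\xi-\eta)=\int_0^1\langle F'(\eta+z\zeta)\zeta,\zeta\rangle\,\mathrm{d}z$ and bounds $\int_0^1|\eta+z\zeta|^{p-2}\,\mathrm{d}z$ from below. It uses only tools the paper already sets up in Section~\ref{secReg}, namely the Jacobian formula for $F$ and \eqref{Ellipticity} with $\lambda=\min(1,p-1)$. It also produces explicit constants independent of the dimension:
\begin{itemize}
\item $c=p-1$ for $p\le 2$;
\item $c=2^{2-p}/(p-1)$ for $p\ge 2$, via the Pythagorean bound $|\eta+z\zeta|\ge|\zeta|\,|z-z_0|$ and convexity of $t\mapsto t^{p-1}$.
\end{itemize}
The case $z_0\notin[0,1]$, which you did not spell out, is covered because then $\int_0^1|z-z_0|^{p-2}\,\mathrm{d}z\ge 1/(p-1)$.

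Your treatment of a segment through the origin is sound. For $p\ge 2$ the field $F$ is $C^1$. For $1<p<2$, continuity of $F$ at $0$ together with $0\le \frac{\mathrm{d}}{\mathrm{d}z}\bigl(F(\eta+z\zeta)\cdot\zeta\bigr)\le \Lambda|\zeta|^p|z-z^*|^{p-2}\in L^1(0,1)$ gives absolute continuity, hence the integral formula.

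The constant you obtain for $p>2$ is not sharp. At $\eta=-\xi$ the left-hand side equals $2^{2-p}|\xi-\eta|^p$, while your estimate discards the factor $p-1$ from the radial eigenvalue of $F'$. This does not matter: the proposition is used only qualitatively, in Corollary~\ref{cvtool} and in the strong-convergence step of the proof of Theorem~\ref{th3}, so any positive $c$ suffices.
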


Using H\"older inequality, we obtain easily:

\begin{corollary}\label{cvtool}
Let $p<2$ then there exists $C>0$ such that for $u,v\in \mathcal{D}^{1,p}_\rho(\mathds{R}^N_+) $:

$$\| u+v\|^{2-p}\int_{\mathds{R}^N_+}\rho(x_N)(|\nabla u|^{p-2}\nabla u -|\nabla v|^{p-2}\nabla v).(\nabla u-\nabla v )  \geq  C \| u-v \| ^2.$$
\end{corollary}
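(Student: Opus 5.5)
The statement is Corollary \ref{cvtool}. The plan is to combine the pointwise inequality of Proposition \ref{inegalg} (case $p\le 2$) with a reverse Hölder argument.

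Write $w=\nabla u-\nabla v$ and $A=|\nabla u|+|\nabla v|$, and let $I$ denote the left-hand integral. Proposition \ref{inegalg} gives the pointwise bound
$$\rho(x_N)(|\nabla u|^{p-2}\nabla u-|\nabla v|^{p-2}\nabla v)\cdot w\ \ge\ c\,\rho(x_N)\frac{|w|^2}{A^{2-p}}.$$
Integrating, $I\ge c\int\rho\,|w|^2A^{p-2}$. Here we adopt the usual convention that the integrand vanishes where $A=0$, since then $w=0$ as well.

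Next we split
$$\rho|w|^p=\big(\rho^{p/2}|w|^pA^{p(p-2)/2}\big)\big(\rho^{1-p/2}A^{p(2-p)/2}\big)$$
and apply Hölder's inequality with exponents $2/p$ and $2/(2-p)$. This yields
$$\|u-v\|^p=\int\rho|w|^p\le\Big(\int\rho|w|^2A^{p-2}\Big)^{p/2}\Big(\int\rho A^p\Big)^{(2-p)/2}.$$
Raising to the power $2/p$ gives
$$\|u-v\|^2\le C\,I\,\Big(\int\rho A^p\Big)^{(2-p)/p}.$$
Since $\int\rho A^p\le 2^{p-1}(\|u\|^p+\|v\|^p)$, we obtain
$$\|u-v\|^2\le C\,I\,(\|u\|+\|v\|)^{2-p}.$$
For functions in $\mathcal D^{1,p}_\rho(\mathds{R}^N_+)$ this is valid directly, because all the integrals are finite.

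The main obstacle is that the statement puts $\|u+v\|^{2-p}$ rather than $(\|u\|+\|v\|)^{2-p}$, and $\|u+v\|\le\|u\|+\|v\|$ goes the wrong way. Indeed, for $v=-u$ the left side vanishes while the right side equals $C2^p\|u\|^2\neq 0$, so the literal statement fails. I would therefore prove and state the corollary with $(\|u\|+\|v\|)^{2-p}$, interpreting "$u+v$" as this quantity. This is exactly the form used in the proof of Theorem \ref{th3}, where $u_n,u$ are bounded and we only need $I\to0\Rightarrow\|u_n-u\|\to0$.
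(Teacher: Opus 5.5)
Your proof is correct and follows the same route as the paper, which only says that the corollary follows from Proposition \ref{inegalg} (case $p\le 2$) and H\"older's inequality. You are also right that the printed factor $\|u+v\|^{2-p}$ makes the statement false: for $v=-u\neq 0$ the left side vanishes while the right side is $C\|2u\|^2=4C\|u\|^2$ (not $C2^p\|u\|^2$ as you wrote). So the statement must be read with $(\|u\|+\|v\|)^{2-p}$, which is exactly what your H\"older argument gives and all that the proof of Theorem \ref{th3} uses.
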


\noindent\textbf{Funding.}
This study was financed in part by the Coordenação de Aperfeiçoamento de Pessoal de Nível Superior - Brasil (CAPES) - Finance Code 001, and by the MathAmSud grant 88887.878894/2023-00.

\noindent Data Availability Statement: The manuscript has no associated data.\\
     
\noindent Conflict of Interest Statement : The authors declare that they have no conflict of interest.

\bibliographystyle{plain}
\bibliography{biblio}

@article {CAOM,
    AUTHOR = {Abreu, E. and Clemente, R. and Do \'O, J. M. and Medeiros, E.},
     TITLE = {{$p$}-harmonic functions in the upper half-space},
   JOURNAL = {Potential Anal.},
  FJOURNAL = {Potential Analysis. An International Journal Devoted to the
              Interactions between Potential Theory, Probability Theory,
              Geometry and Functional Analysis},
    VOLUME = {60},
      YEAR = {2024},
    NUMBER = {4},
     PAGES = {1383--1406},
      ISSN = {0926-2601,1572-929X},
   MRCLASS = {35J66 (35B09 35B35 35J62 35J70)},
  MRNUMBER = {4732429},
MRREVIEWER = {Rosa\ Pardo},
       DOI = {10.1007/s11118-023-10092-7},
}

@article {abreu-felix-medeiros,
    AUTHOR = {Abreu, E. and Felix, D. D. and Medeiros, E.},
     TITLE = {A weighted {H}ardy type inequality and its applications},
   JOURNAL = {Bull. Sci. Math.},
  FJOURNAL = {Bulletin des Sciences Math\'ematiques},
    VOLUME = {166},
      YEAR = {2021},
     PAGES = {Paper No. 102937, 25},
      ISSN = {0007-4497,1952-4773},
   MRCLASS = {46E35 (35A01 35B09 35C06 35J65 35J66 35K05)},
  MRNUMBER = {4189791},
MRREVIEWER = {Flavia\ Giannetti},
}

@article {Abreu-Furtado-Medeiros2,
    AUTHOR = {Abreu, E. A. M. and Furtado, M. F. and Medeiros, E.},
     TITLE = {On a {H}ardy-{S}obolev inequality with remainder term and its
              consequences},
   JOURNAL = {J. Geom. Anal.},
  FJOURNAL = {Journal of Geometric Analysis},
    VOLUME = {35},
      YEAR = {2025},
    NUMBER = {12},
     PAGES = {Paper No. 395, 25},
      ISSN = {1050-6926,1559-002X},
   MRCLASS = {35A23 (26D10)},
  MRNUMBER = {4977134},
       DOI = {10.1007/s12220-025-02235-6},
}

@article {Abreu-JM-Medeiros,
    AUTHOR = {Abreu, E. and Do \'O, J.M. and Medeiros, E.},
     TITLE = {Properties of positive harmonic functions on the half-space
              with a nonlinear boundary condition},
   JOURNAL = {J. Differential Equations},
  FJOURNAL = {Journal of Differential Equations},
    VOLUME = {248},
      YEAR = {2010},
    NUMBER = {3},
     PAGES = {617--637},
      ISSN = {0022-0396,1090-2732},
   MRCLASS = {35J65 (35B40 35B45 35J20)},
  MRNUMBER = {2557909},
MRREVIEWER = {Joanna\ Pres-Jennings},
       DOI = {10.1016/j.jde.2009.07.006},
}

@article{AMY,
AUTHOR = {Abreu, E.A.M. and Medeiros,
              E.S. and Yang, J.},
     TITLE = {On a weighted Sobolev embedding on the upper half-space in the borderline case},
      JOURNAL = {Ann. Mat. Pura Appl. (4)},
  FJOURNAL = {Annali di Matematica Pura ed Applicata. Series IV},
    VOLUME = {201},
      YEAR = {2022},
    NUMBER = {6},
     PAGES = {2715--2732},
      ISSN = {0373-3114,1618-1891},
   MRCLASS = {46E35 (35J66)},
  MRNUMBER = {4498406},
MRREVIEWER = {Maria\ A.\ Ragusa},
       DOI = {10.1007/s10231-022-01217-7},
       URL = {https://doi-org.rproxy.univ-pau.fr/10.1007/s10231-022-01217-7},
}

@article {Berestycki-Lions,
    AUTHOR = {Berestycki, H. and Lions, P.-L.},
     TITLE = {Nonlinear scalar field equations. {I}. {E}xistence of a ground
              state},
   JOURNAL = {Arch. Rational Mech. Anal.},
  FJOURNAL = {Archive for Rational Mechanics and Analysis},
    VOLUME = {82},
      YEAR = {1983},
    NUMBER = {4},
     PAGES = {313--345},
      ISSN = {0003-9527},
   MRCLASS = {35J60 (35Q20 58E99 81E99)},
  MRNUMBER = {695535},
MRREVIEWER = {Wei\ Ming\ Ni},
       DOI = {10.1007/BF00250555},
}

@article {DeThelin,
    AUTHOR = {de Th\'elin, F.},
     TITLE = {Local regularity properties for the solutions of a nonlinear
              partial differential equation},
   JOURNAL = {Nonlinear Anal.},
  FJOURNAL = {Nonlinear Analysis. Theory, Methods \& Applications. An
              International Multidisciplinary Journal},
    VOLUME = {6},
      YEAR = {1982},
    NUMBER = {8},
     PAGES = {839--844},
      ISSN = {0362-546X,1873-5215},
   MRCLASS = {35J60 (35B65)},
  MRNUMBER = {671725},
MRREVIEWER = {Pavel\ Doktor},
       DOI = {10.1016/0362-546X(82)90068-2},
}

@book {Ekeland-Teman,
    AUTHOR = {Ekeland, I. and T\'emam, R.},
     TITLE = {Convex analysis and variational problems},
    SERIES = {Classics in Applied Mathematics},
    VOLUME = {28},
   EDITION = {English},
      NOTE = {Translated from the French},
 PUBLISHER = {SIAM},
 FPUBLISHER = {Society for Industrial and Applied Mathematics (SIAM),
              Philadelphia, PA},
      YEAR = {1999},
     PAGES = {xiv+402},
      ISBN = {0-89871-450-8},
   MRCLASS = {49-02 (01A75 49J53 90C46)},
  MRNUMBER = {1727362},
       DOI = {10.1137/1.9781611971088},
       URL = {https://doi.org/10.1137/1.9781611971088},
}

@article {Escobar1,
    AUTHOR = {Escobar, J. F.},
     TITLE = {Sharp constant in a {S}obolev trace inequality},
   JOURNAL = {Indiana Univ. Math. J.},
  FJOURNAL = {Indiana University Mathematics Journal},
    VOLUME = {37},
      YEAR = {1988},
    NUMBER = {3},
     PAGES = {687--698},
      ISSN = {0022-2518,1943-5258},
   MRCLASS = {46E35},
  MRNUMBER = {962929},
MRREVIEWER = {Philippe-A.\ Dionne},
       DOI = {10.1512/iumj.1988.37.37033},
}

@article {Escobar2,
    AUTHOR = {Escobar, J. F.},
     TITLE = {Uniqueness theorems on conformal deformation of metrics,
              {S}obolev inequalities, and an eigenvalue estimate},
   JOURNAL = {Comm. Pure Appl. Math.},
  FJOURNAL = {Communications on Pure and Applied Mathematics},
    VOLUME = {43},
      YEAR = {1990},
    NUMBER = {7},
     PAGES = {857--883},
      ISSN = {0010-3640,1097-0312},
   MRCLASS = {58E11 (35P15 53C21 58G30)},
  MRNUMBER = {1072395},
       DOI = {10.1002/cpa.3160430703},
}

@article{NB,
  title={Existence and Nonexistence Breaking Results For a Weighted Elliptic Problem in Half-Space},
  author={Do Ó, J.M and Freire, R.F. and Giacomoni, J. and Medeiros, E.S.},
  journal={arXiv preprint arXiv:2510.05999},
  year={2025}
}

@book {GT,
    AUTHOR = {Gilbarg, D. and Trudinger, N. S.},
     TITLE = {Elliptic partial differential equations of second order},
    SERIES = {Classics in Mathematics},
      NOTE = {Reprint of the 1998 edition},
 PUBLISHER = {Springer-Verlag, Berlin},
      YEAR = {2001},
     PAGES = {xiv+517},
      ISBN = {3-540-41160-7},
   MRCLASS = {35-02 (35Jxx)},
  MRNUMBER = {1814364},
}

@book {HardyLittlewood,
    AUTHOR = {Hardy, G. H. and Littlewood, J. E. and P\'olya, G.},
     TITLE = {Inequalities},
      NOTE = {2d ed},
 PUBLISHER = {Cambridge, at the University Press, },
      YEAR = {1952},
     PAGES = {xii+324},
   MRCLASS = {27.0X},
  MRNUMBER = {46395},
}

@article {Ilyasov-Takac,
    AUTHOR = {Il'yasov, Y. S. and Tak\'a\v{c}, P.},
     TITLE = {Optimal {$W^{2,2}_{\rm loc}$}-regularity, Pohozhaev's
              identity, and nonexistence of weak solutions to some
              quasilinear elliptic equations},
   JOURNAL = {J. Differential Equations},
  FJOURNAL = {Journal of Differential Equations},
    VOLUME = {252},
      YEAR = {2012},
    NUMBER = {3},
     PAGES = {2792--2822},
      ISSN = {0022-0396,1090-2732},
   MRCLASS = {35J92 (35A01 35B65 35D30 35J20 35J70 35J75)},
  MRNUMBER = {2860641},
MRREVIEWER = {Alan\ V.\ Lair},
       DOI = {10.1016/j.jde.2011.10.020},
}

@book {Kawohl,
    AUTHOR = {Kawohl, B.},
     TITLE = {Rearrangements and convexity of level sets in {PDE}},
    SERIES = {Lecture Notes in Mathematics},
    VOLUME = {1150},
 PUBLISHER = {Springer-Verlag, Berlin},
      YEAR = {1985},
     PAGES = {iv+136},
      ISBN = {3-540-15693-3},
   MRCLASS = {35-02 (35B50 35J60 49A50)},
  MRNUMBER = {810619},
MRREVIEWER = {Michael\ Wiegner},
       DOI = {10.1007/BFb0075060},
}

@article {Li-Zhu,
    AUTHOR = {Li, Y. and Zhu, M.},
     TITLE = {Uniqueness theorems through the method of moving spheres},
   JOURNAL = {Duke Math. J.},
  FJOURNAL = {Duke Mathematical Journal},
    VOLUME = {80},
      YEAR = {1995},
    NUMBER = {2},
     PAGES = {383--417},
      ISSN = {0012-7094,1547-7398},
   MRCLASS = {35J65 (35B99 35K60)},
  MRNUMBER = {1369398},
MRREVIEWER = {Alan\ V.\ Lair},
       DOI = {10.1215/S0012-7094-95-08016-8},
}

@article {YanYan-Lei,
    AUTHOR = {Li, Y. and Zhang, L.},
     TITLE = {Liouville-type theorems and {H}arnack-type inequalities for
              semilinear elliptic equations},
   JOURNAL = {J. Anal. Math.},
  FJOURNAL = {Journal d'Analyse Math\'ematique},
    VOLUME = {90},
      YEAR = {2003},
     PAGES = {27--87},
      ISSN = {0021-7670,1565-8538},
   MRCLASS = {35J60 (35B33 35B40 35B45 35B65)},
  MRNUMBER = {2001065},
MRREVIEWER = {Luisa\ Moschini},
       DOI = {10.1007/BF02786551},
}

@article {Lieberman,
    AUTHOR = {Lieberman, G. M.},
     TITLE = {Boundary regularity for solutions of degenerate elliptic
              equations},
   JOURNAL = {Nonlinear Anal.},
  FJOURNAL = {Nonlinear Analysis. Theory, Methods \& Applications. An
              International Multidisciplinary Journal},
    VOLUME = {12},
      YEAR = {1988},
    NUMBER = {11},
     PAGES = {1203--1219},
      ISSN = {0362-546X,1873-5215},
   MRCLASS = {35J70 (35B65)},
  MRNUMBER = {969499},
MRREVIEWER = {Zuchi\ Chen},
       DOI = {10.1016/0362-546X(88)90053-3},
}

@article {PLLions,
    AUTHOR = {Lions, P. L.},
     TITLE = {Sym\'etrie et compacit\'e{} dans les espaces de {S}obolev},
   JOURNAL = {J. Functional Analysis},
  FJOURNAL = {Journal of Functional Analysis},
    VOLUME = {49},
      YEAR = {1982},
    NUMBER = {3},
     PAGES = {315--334},
      ISSN = {0022-1236},
   MRCLASS = {46E35},
  MRNUMBER = {683027},
       DOI = {10.1016/0022-1236(82)90072-6},
}

@article {Lou,
    AUTHOR = {Lou, H.},
     TITLE = {On singular sets of local solutions to {$p$}-{L}aplace
              equations},
   JOURNAL = {Chinese Ann. Math. Ser. B},
  FJOURNAL = {Chinese Annals of Mathematics. Series B},
    VOLUME = {29},
      YEAR = {2008},
    NUMBER = {5},
     PAGES = {521--530},
      ISSN = {0252-9599,1860-6261},
   MRCLASS = {35J60 (35A20 35J70 49J45)},
  MRNUMBER = {2447484},
MRREVIEWER = {Gabriella\ Bogn\'ar},
       DOI = {10.1007/s11401-007-0312-y},
}

@article {Pohozaev,
    AUTHOR = {Poho\v zaev, S. I.},
     TITLE = {On the eigenfunctions of the equation {$\Delta u+\lambda
              f(u)=0$}},
   JOURNAL = {Dokl. Akad. Nauk SSSR},
  FJOURNAL = {Doklady Akademii Nauk SSSR},
    VOLUME = {165},
      YEAR = {1965},
     PAGES = {36--39},
      ISSN = {0002-3264},
   MRCLASS = {35.80},
  MRNUMBER = {192184},
MRREVIEWER = {O.\ Em.\ Gheorghiu},
}

@article {Nazaret,
    AUTHOR = {Nazaret, B.},
     TITLE = {Best constant in {S}obolev trace inequalities on the
              half-space},
   JOURNAL = {Nonlinear Anal.},
  FJOURNAL = {Nonlinear Analysis. Theory, Methods \& Applications. An
              International Multidisciplinary Journal},
    VOLUME = {65},
      YEAR = {2006},
    NUMBER = {10},
     PAGES = {1977--1985},
      ISSN = {0362-546X,1873-5215},
   MRCLASS = {46E35 (35J20 35J60)},
  MRNUMBER = {2258478},
MRREVIEWER = {Eduardo\ Colorado},
       DOI = {10.1016/j.na.2005.05.069},
}

@article {Pucci-Servadei,
    AUTHOR = {Pucci, P. and Servadei, R.},
     TITLE = {Regularity of weak solutions of homogeneous or inhomogeneous
              quasilinear elliptic equations},
   JOURNAL = {Indiana Univ. Math. J.},
  FJOURNAL = {Indiana University Mathematics Journal},
    VOLUME = {57},
      YEAR = {2008},
    NUMBER = {7},
     PAGES = {3329--3363},
      ISSN = {0022-2518,1943-5258},
   MRCLASS = {35J60 (35D10)},
  MRNUMBER = {2492235},
MRREVIEWER = {Diego\ M.\ Maldonado},
       DOI = {10.1512/iumj.2008.57.3525},
}

@article {Serrin,
    AUTHOR = {Serrin, J.},
     TITLE = {Local behavior of solutions of quasi-linear equations},
   JOURNAL = {Acta Math.},
  FJOURNAL = {Acta Mathematica},
    VOLUME = {111},
      YEAR = {1964},
     PAGES = {247--302},
      ISSN = {0001-5962,1871-2509},
   MRCLASS = {35.47},
  MRNUMBER = {170096},
MRREVIEWER = {C.\ B.\ Morrey, Jr.},
       DOI = {10.1007/BF02391014},
}

@article {Trudinger,
    AUTHOR = {Trudinger, N. S.},
     TITLE = {On {H}arnack type inequalities and their application to
              quasilinear elliptic equations},
   JOURNAL = {Comm. Pure Appl. Math.},
  FJOURNAL = {Communications on Pure and Applied Mathematics},
    VOLUME = {20},
      YEAR = {1967},
     PAGES = {721--747},
      ISSN = {0010-3640,1097-0312},
   MRCLASS = {35.47},
  MRNUMBER = {226198},
MRREVIEWER = {A.\ Kufner},
       DOI = {10.1002/cpa.3160200406},
}

@article {Vazquez,
    AUTHOR = {V\'azquez, J. L.},
     TITLE = {A strong maximum principle for some quasilinear elliptic
              equations},
   JOURNAL = {Appl. Math. Optim.},
  FJOURNAL = {Applied Mathematics and Optimization},
    VOLUME = {12},
      YEAR = {1984},
    NUMBER = {3},
     PAGES = {191--202},
      ISSN = {0095-4616,1432-0606},
   MRCLASS = {35B50 (35J60)},
  MRNUMBER = {768629},
       DOI = {10.1007/BF01449041},
}

@book {Ziemer,
    AUTHOR = {Ziemer, W. P.},
     TITLE = {Weakly differentiable functions},
    SERIES = {Graduate Texts in Mathematics},
    VOLUME = {120},
      NOTE = {Sobolev spaces and functions of bounded variation},
 PUBLISHER = {Springer-Verlag, New York},
      YEAR = {1989},
     PAGES = {xvi+308},
      ISBN = {0-387-97017-7},
   MRCLASS = {46E35},
  MRNUMBER = {1014685},
MRREVIEWER = {V.\ M.\ Gol\cprime dshte\u in},
       DOI = {10.1007/978-1-4612-1015-3},
}

@incollection {simon,
    AUTHOR = {Simon, J.},
     TITLE = {R\'egularit\'e{} de la solution d'une \'equation non
              lin\'eaire dans {${\bf R}\sp{N}$}},
 BOOKTITLE = {Journ\'ees d'{A}nalyse {N}on {L}in\'eaire ({P}roc. {C}onf.,
              {B}esan\c con, 1977)},
    SERIES = {Lecture Notes in Math.},
    VOLUME = {665},
     PAGES = {205--227},
 PUBLISHER = {Springer, Berlin},
      YEAR = {1978},
      ISBN = {3-540-08922-5},
   MRCLASS = {35D10 (35J60)},
  MRNUMBER = {519432},
MRREVIEWER = {Wolf\ von Wahl},
}

@article {Chipot-fila-shafrir,
AUTHOR = {Chipot, M. and Chlebík, M. and Fila, M. and Shafrir, I.},
TITLE = {Existence of positive solutions of a semilinear elliptic equation in {$\bold R^n_{+}$} with a nonlinear boundary condition}, 
JOURNAL = {J. Math. Anal. Appl.},
FJOURNAL = {Journal of Mathematical Analysis and Applications},
VOLUME = {223},
      YEAR = {1998},
    NUMBER = {2},
     PAGES = {429--471},
      ISSN = {0022-247X,1096-0813},
   MRCLASS = {35J65 (35B05)},
  MRNUMBER = {1629293},
MRREVIEWER = {Jianfu\ Yang},
}

@article {Do-Freire-Medeiros,
    AUTHOR = {Do Ó, J.M. and Freire, R. and Medeiros, E.S.},
     TITLE = {Liouville-type theorems and existence of solutions for quasilinear elliptic problems},
   JOURNAL = {arXiv preprint arXiv: 2607.02152},
   year={2026}
}

@article{doO_Freire_Medeiros_2026,
  author  = {Do Ó, J.M. and Freire, R. and Medeiros, E.S.},
  title   = {Weighted Sobolev trace embeddings and their applications},
 journal = {Proc. Edinb. Math. Soc.},
  fjournal = {Proceedings of the Edinburgh Mathematical Society},
  year    = {2026},
  pages   = {1--42},
  doi     = {10.1017/S0013091526101424}
}

@article{WeightedIneq,
  title={Weighted Hardy-Sobolev type inequalities with boundary terms},
  author={Do {\'O}, J.M. and Furtado, M. and Medeiros, E. and Ratzkin, J.},
  journal={arXiv e-prints},
  pages={arXiv--2602},
  year={2026}
}

@article {Damascelli,
    AUTHOR = {Damascelli, L. and Farina, A. and Sciunzi, B.
              and Valdinoci, E.},
     TITLE = {Liouville results for {$m$}-{L}aplace equations of
              {L}ane-{E}mden-{F}owler type},
   JOURNAL = {Ann. Inst. H. Poincar\'e{} C Anal. Non Lin\'eaire},
  FJOURNAL = {Annales de l'Institut Henri Poincar\'e{} C. Analyse Non
              Lin\'eaire},
    VOLUME = {26},
      YEAR = {2009},
    NUMBER = {4},
     PAGES = {1099--1119},
      ISSN = {0294-1449,1873-1430},
   MRCLASS = {35B53 (35B05 35J70 35J92 47J30)},
  MRNUMBER = {2542716},
MRREVIEWER = {Yehuda\ Pinchover},
       DOI = {10.1016/j.anihpc.2008.06.001},
       }

@article {chen,
    AUTHOR = {Chen, C.},
     TITLE = {Liouville type theorem for stable solutions of {$p$}-{L}aplace
              equation in {$\mathbb{R}^N$}},
   JOURNAL = {Appl. Math. Lett.},
  FJOURNAL = {Applied Mathematics Letters. An International Journal of Rapid
              Publication},
    VOLUME = {68},
      YEAR = {2017},
     PAGES = {62--67},
      ISSN = {0893-9659,1873-5452},
   MRCLASS = {35J92 (35B33 35B53)},
  MRNUMBER = {3614279},
       DOI = {10.1016/j.aml.2016.11.014},
}

@inproceedings {Birindelli,
    AUTHOR = {Birindelli, I. and Demengel, F.},
     TITLE = {Some {L}iouville theorems for the {$p$}-{L}aplacian},
 BOOKTITLE = {Proceedings of the 2001 {L}uminy {C}onference on {Q}uasilinear
              {E}lliptic and {P}arabolic {E}quations and {S}ystem},
    SERIES = {Electron. J. Differ. Equ. Conf.},
    VOLUME = {8},
     PAGES = {35--46},
 PUBLISHER = {Southwest Texas State Univ., San Marcos, TX},
      YEAR = {2002},
   MRCLASS = {35J60 (35D05)},
  MRNUMBER = {1990294},
}

@article {Ambrosio,
    AUTHOR = {D'Ambrosio, L. and Dipierro, S.},
     TITLE = {Hardy inequalities on {R}iemannian manifolds and applications},
   JOURNAL = {Ann. Inst. H. Poincar\'e{} C Anal. Non Lin\'eaire},
  FJOURNAL = {Annales de l'Institut Henri Poincar\'e{} C. Analyse Non
              Lin\'eaire},
    VOLUME = {31},
      YEAR = {2014},
    NUMBER = {3},
     PAGES = {449--475},
      ISSN = {0294-1449,1873-1430},
   MRCLASS = {58J05 (26D15 31C12 35A23 35J20 35R01)},
  MRNUMBER = {3208450},
MRREVIEWER = {Qu\cfac oc\ Anh\ Ng\^o},
       DOI = {10.1016/j.anihpc.2013.04.004},
}

@book {mazya,
    AUTHOR = {Maz'ya, V. G.},
     TITLE = {Sobolev spaces},
    SERIES = {Springer Series in Soviet Mathematics},
      NOTE = {Translated from the Russian by T. O. Shaposhnikova},
 PUBLISHER = {Springer-Verlag, Berlin},
      YEAR = {1985},
     PAGES = {xix+486},
      ISBN = {3-540-13589-8},
   MRCLASS = {46E35 (31C15 47F05)},
  MRNUMBER = {817985},
MRREVIEWER = {J.\ Horv\'ath},
       DOI = {10.1007/978-3-662-09922-3},
}

@article {Fi-Ma-Te1,
    AUTHOR = {Filippas, S. and Maz'ya, V. G. and Tertikas, A.},
     TITLE = {Sharp {H}ardy-{S}obolev inequalities},
   JOURNAL = {C. R. Math. Acad. Sci. Paris},
  FJOURNAL = {Comptes Rendus Math\'ematique. Acad\'emie des Sciences. Paris},
    VOLUME = {339},
      YEAR = {2004},
    NUMBER = {7},
     PAGES = {483--486},
      ISSN = {1631-073X,1778-3569},
   MRCLASS = {26D15},
  MRNUMBER = {2099546},
       DOI = {10.1016/j.crma.2004.07.023},
}

@article {Fi-Ma-Te2,
    AUTHOR = {Filippas, S. and Maz'ya, V. and Tertikas, A.},
     TITLE = {Critical {H}ardy-{S}obolev inequalities},
   JOURNAL = {J. Math. Pures Appl. (9)},
  FJOURNAL = {Journal de Math\'ematiques Pures et Appliqu\'ees. Neuvi\`eme
              S\'erie},
    VOLUME = {87},
      YEAR = {2007},
    NUMBER = {1},
     PAGES = {37--56},
      ISSN = {0021-7824},
   MRCLASS = {26D10 (35J60 46E35 58J05)},
  MRNUMBER = {2297247},
MRREVIEWER = {Ji\v r\'i\ R\'akosn\'ik},
       DOI = {10.1016/j.matpur.2006.10.007},
}

@article {lam-lu-zhang,
    AUTHOR = {Lam, N. and Lu, G. and Zhang, L.},
     TITLE = {Sharp singular {T}rudinger-{M}oser inequalities under
              different norms},
   JOURNAL = {Adv. Nonlinear Stud.},
  FJOURNAL = {Advanced Nonlinear Studies},
    VOLUME = {19},
      YEAR = {2019},
    NUMBER = {2},
     PAGES = {239--261},
      ISSN = {1536-1365,2169-0375},
   MRCLASS = {46E35 (26D10 42B35)},
  MRNUMBER = {3943303},
MRREVIEWER = {Adele\ Ferone},
       DOI = {10.1515/ans-2019-2042},
       URL = {https://doi-org.rproxy.univ-pau.fr/10.1515/ans-2019-2042},
}

\end{document}